\documentclass[a4paper,11pt]{amsart}
\renewcommand{\uppercasenonmath}[1]

\allowdisplaybreaks
\usepackage[T1]{fontenc}
\usepackage[utf8]{inputenc}

\usepackage{amsmath}
\usepackage{amsthm}
\usepackage{amssymb}
\usepackage{amstext}
\usepackage{mathtools}

\usepackage{mathrsfs}
\usepackage{bm}
\usepackage{shuffle}
\usepackage[all]{xy}
\usepackage{ytableau}
\ytableausetup{boxsize=normal,aligntableaux=center}

\usepackage{xcolor} 
\usepackage{graphicx}
\usepackage{tikz}

\usepackage[shortalphabetic]{amsrefs}
\usepackage{ascmac}
\usepackage{comment}
\usepackage{here}
\usepackage{time}
\usepackage{fullpage} 

\usepackage{hyperref}
\hypersetup{
  colorlinks=true,
  linkcolor=blue,
  citecolor=blue,
  filecolor=blue,
  urlcolor=blue
}

\usepackage[capitalize,nameinlink,noabbrev,nosort]{cleveref}

\DeclareFontEncoding{OT2}{}{}
\DeclareFontSubstitution{OT2}{cmr}{m}{l}
\DeclareFontFamily{OT2}{cmr}{\hyphenchar\font45 }
\DeclareFontShape{OT2}{cmr}{m}{l}{%
  <5><6><7><8><9>gen*wncyr%
  <10><10.95><12><14.4><17.28><20.74><24.88>wncyr10}{}
\DeclareMathAlphabet{\mathcyr}{OT2}{cmr}{m}{l}
\DeclareMathAlphabet{\mathcyb}{OT2}{cmr}{b}{l}
\SetMathAlphabet{\mathcyr}{bold}{OT2}{cmr}{b}{l}

\makeatletter
\@namedef{subjclassname@2020}{%
  \textup{2020} Mathematics Subject Classification}
\makeatother

\newtheorem{theoremcounter}{Theorem Counter}[section]

\theoremstyle{definition}

\newtheorem{remark}[theoremcounter]{Remark}

\theoremstyle{plain}
\newtheorem{lemma}[theoremcounter]{Lemma}
\newtheorem{proposition}[theoremcounter]{Proposition}
\newtheorem{corollary}[theoremcounter]{Corollary}

\newtheorem{theorem}[theoremcounter]{Theorem}

\theoremstyle{definition}

\theoremstyle{plain}

\numberwithin{equation}{section}

\newcommand{\Z}{\mathbb{Z}}

\newcommand{\R}{\mathbb{R}}

\begin{document}

\title{Optimal Rates in Local and Central Limit Theorems for the Elephant Random Walk}

\author{Hideki Murahara}
\address[Hideki Murahara]{The University of Kitakyushu, 4-2-1 Kitagata, Kokuraminami-ku, Kitakyushu, Fukuoka, 802-8577, Japan}
\email{hmurahara@mathformula.page}

\author{Go Tokumitsu}
\address[Go Tokumitsu]{Department of Mathematics, Graduate School of Science, The University of Osaka, 1-1 Machikaneyama-cho, Toyonaka, Osaka, 560-0043, Japan}
\email{go.tokumitsu@gmail.com}

\date{}

\subjclass[2020]{60F05, 60G50, 42A38}

\keywords{Elephant random walk, local limit theorem, Edgeworth expansion, optimal rate of convergence, central limit theorem, phase transition}

\begin{abstract}
We determine the optimal rates of convergence in the local and central limit theorems for the one-dimensional elephant random walk in the diffusive and critical regimes, $0\le p\le3/4$. For every fixed initial distribution, we treat both exact finite-time centering and standardization and the normalization $S_n/\sqrt n$
(or $S_n/\sqrt{n\log n}$ at criticality). 
We establish uniform Edgeworth expansions for local probabilities and distribution functions, and use these expansions to derive the optimal convergence rates, which exhibit several phase transitions depending on the symmetry of the initial distribution, the normalization, and the type of approximation.
Our approach uses elephant polynomials for
precise characteristic-function expansions
and a random recursive tree representation
for global Fourier bounds.
\end{abstract}

\maketitle

\section{Introduction}\label{sec:intro}
We determine optimal rates of Gaussian approximation for the one-dimensional elephant random walk (ERW). 
Uniform Edgeworth expansions yield a classification according to the symmetry of the initial distribution, the normalization, and whether the approximation concerns local probabilities or distribution functions.
\subsection{Model and notation}
\mbox{Sch\"utz} and Trimper~%
\cite{SchutzTrimper2004} introduced the ERW, a process whose next increment is obtained by choosing a past increment uniformly and either repeating or reversing it.
Let $p,q\in[0,1]$. Let $X_1\in\{-1,1\}$ satisfy $\mathbb{P}(X_1=1)=q$ and $\mathbb{P}(X_1=-1)=1-q$. For $n\ge2$, let $U_n$ be uniform on $\{1,\ldots,n-1\}$ and let $\eta_n\in\{-1,1\}$ satisfy $\mathbb{P}(\eta_n=1)=p$ and $\mathbb{P}(\eta_n=-1)=1-p$. Assume that $X_1,U_2,U_3,\ldots,\eta_2,\eta_3,\ldots$ are mutually independent. Define $S_0=0$,
\[
X_n=\eta_n X_{U_n}\quad(n\ge2),\qquad
S_n=\sum_{j=1}^n X_j\quad(n\ge1).
\]
Then $(S_n)_{n\ge0}$ is the one-dimensional ERW with memory parameter $p$ and initial bias $q$. Its law and expectation are denoted by $\mathbb{P}_q$ and $\mathbb{E}_q$.

The long-time behavior changes sharply with $p$: the transition from diffusion to superdiffusion occurs at $p=3/4$~%
\cite{SchutzTrimper2004}. Write $\mu_n=\mathbb{E}_q[S_n]$ and $v_n^2=\operatorname{Var}_q(S_n)$,
where $\operatorname{Var}_q$ denotes the variance under $\mathbb{P}_q$. The exact mean is
\begin{align}\label{eq:m1_asym}
\mu_n=(2q-1)\prod_{j=1}^{n-1}\left(1+\frac{2p-1}{j}\right).
\end{align}
For $p>0$ and $q\ne1/2$, this gives $\mu_n\sim(2q-1)n^{2p-1}/\Gamma(2p)$. For $q=1/2$ the mean is zero, and for $p=0$ it is zero for every $n\ge2$. Moreover,
\begin{align}\label{vnasymp}
v_n^2\sim
\begin{cases}
\displaystyle \frac{n}{3-4p},&0\le p<3/4,\\[2mm]
n\log n,&p=3/4,\\[2mm]
\displaystyle\left\{\frac{1}{(4p-3)\Gamma(4p-2)}
-\frac{(2q-1)^2}{\Gamma(2p)^2}\right\}n^{4p-2},&3/4<p<1.
\end{cases}
\end{align}
Here $a_n\sim b_n$ means $a_n/b_n\to1$. At $p=1$, every increment equals $X_1$, so $S_n=nX_1$, $\mu_n=(2q-1)n$, and $v_n^2=\{1-(2q-1)^2\}n^2$, including the zero-variance cases $q=0,1$. These formulas were established by %
\cite{SchutzTrimper2004}. Thus a single memory parameter produces both anomalous diffusion and a phase transition.

Coletti, Gava, and Sch\"utz~%
\cite{ColettiGavaSchutz2017} proved central limit theorems with the normalizations $\sqrt n$ in the diffusive regime $0\le p<3/4$ and $\sqrt{n\log n}$ at the critical point $p=3/4$. They also showed that, in the superdiffusive regime $3/4<p<1$,
$S_n/n^{2p-1}$ converges almost surely to a nondegenerate
non-Gaussian random variable $L_{p,q}$. In the superdiffusive regime $3/4<p<1$, Kubota and Takei~\cite{KubotaTakei2019}
proved that $(S_n-n^{2p-1}L_{p,q})/\sqrt n$
converges in distribution to a centered Gaussian law
with variance $1/(4p-3)$. 

In this paper, we focus on the diffusive and critical regimes $0\le p\le3/4$. 
Berry--Esseen bounds were obtained by
Fan, Hu, and Ma~\cite{FanHuMa2021};
Dedecker, Fan, Hu, and Merlev\`ede~%
\cite{DedeckerFanHuMerlevede2023};
and Qin~%
\cite{Qin2025}.
These results do not, by themselves, determine the optimal rates for every normalization and initial distribution considered here. 
Local limit estimates also appear in Fan, Hu, and Ma~%
\cite{FanHuMa2021} and Huang, Fan, and Xiang~%
\cite{HuangFanXiang2026}. Appendix~\ref{app:local} discusses some issues with these earlier local limit estimates.

Set $\mathcal F_n=\sigma(X_1,\ldots,X_n)$. The definition gives
\begin{align}\label{eq:transition}
\mathbb{P}_q(S_{n+1}=S_n\pm1\mid\mathcal F_n)
=\frac12\pm\frac{(2p-1)S_n}{2n},\qquad n\ge1,
\end{align}
with matching signs. In particular, $(S_n)$ is a time-inhomogeneous Markov chain. When $p=q=1/2$, it is the simple symmetric random walk. When $q=1/2$, its law is symmetric: $(S_n)_{n\ge0}\overset{d}{=}(-S_n)_{n\ge0}$.

Let $\varphi(x)=e^{-x^2/2}/\sqrt{2\pi}$ and $\Phi(x)=\int_{-\infty}^x\varphi(y)\,dy$ be the standard Gaussian density and distribution function. The Hermite polynomials are
\[
H_j(x)=(-1)^j e^{x^2/2}\frac{d^j}{dx^j}e^{-x^2/2};
\qquad H_2(x)=x^2-1,\quad H_3(x)=x^3-3x,\quad H_4(x)=x^4-6x^2+3.
\]
Write $\kappa_r^{(q)}(n)$ for the $r$th cumulant of $S_n$ under $\mathbb{P}_q$.

Throughout, put $\alpha=2p-1$, $\beta=2q-1$, and $\mathcal{H}_n=\sum_{j=1}^n j^{-1}$. To describe the remainders, for $0<\alpha<1/2$, $k=1,2,3$, and $n\ge2$, define
\begin{align}\label{eq:G}
G_k(n)=
\begin{cases}
n,&2k\alpha<1,\\
n\log n,&2k\alpha=1,\\
n^{2k\alpha},&2k\alpha>1
\end{cases}
\end{align}
and
\begin{align}\label{eq:Rscale}
\mathcal R_n(\alpha,\beta)=
\begin{cases}
n^{-2},&-1\le\alpha\le0,\\
G_3(n)n^{-3}
+|\beta|n^{\alpha-5/2}G_2(n),
&0<\alpha<1/2,\\
\mathcal{H}_n^{-3}+|\beta|\mathcal{H}_n^{-5/2},
&\alpha=1/2.
\end{cases}
\end{align}
The symbols $O_p(\,\cdot\,)$ and $O_\alpha(\,\cdot\,)$ denote deterministic bounds with constants depending on the fixed parameter $p$ or $\alpha$, respectively; they do not denote stochastic order. Unless stated otherwise, these bounds are uniform in $q\in[0,1]$ and other relevant variables.
The notation $o_p(\,\cdot\,)$ is likewise deterministic, with the same uniformity conventions. A double subscript, as in $O_{p,q}$, indicates that the constant may also depend on $q$.

\subsection{Main results: Edgeworth expansions}
The first theorem gives a local expansion with third- and fourth-cumulant corrections throughout the diffusive regime and at criticality.
\begin{theorem}\label{thm:edgeworth}
Fix $0\le p\le3/4$. As $n\to\infty$, uniformly in $q\in[0,1]$ and integers $s\equiv n\pmod2$,
\begin{align}\label{eq:edgeworth}
\frac{v_n}{2}\mathbb{P}_q(S_n=s)
&=
\varphi\left(\frac{s-\mu_n}{v_n}\right)
\left\{
1
+
\frac{\kappa_3^{(q)}(n)}{6v_n^3}
H_3\left(\frac{s-\mu_n}{v_n}\right)
+
\frac{\kappa_4^{(q)}(n)}{24v_n^4}
H_4\left(\frac{s-\mu_n}{v_n}\right)
\right\}
\\
&\quad+
O_p\bigl(\mathcal R_n(\alpha,\beta)\bigr).
\notag\end{align}
In particular,
\begin{align*}
\sup_{q\in[0,1]}
\sup_{\substack{s\in\Z\\ s\equiv n\!\!\!\pmod2}}
\left|
\frac{v_n}{2}\mathbb{P}_q(S_n=s)
-
\varphi\left(\frac{s-\mu_n}{v_n}\right)
\right|
&\longrightarrow0.
\end{align*}
\end{theorem}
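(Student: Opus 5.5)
We prove the expansion by Fourier inversion on the lattice $s\equiv n \pmod 2$. Since $S_n$ takes values in $n+2\Z$, we have
\[
\mathbb{P}_q(S_n=s)=\frac{1}{\pi}\int_{-\pi/2}^{\pi/2} e^{-its}\,\psi_n(t)\,dt,\qquad \psi_n(t)=\mathbb{E}_q[e^{itS_n}].
\]
The relation $\psi_n(t+\pi)=(-1)^n\psi_n(t)$ lets us fold the integral over $[-\pi,\pi]$ onto a half period. Substituting $t=u/v_n$ gives
\[
\frac{v_n}{2}\mathbb{P}_q(S_n=s)=\frac{1}{2\pi}\int_{|u|\le \pi v_n/2} e^{-iu(s-\mu_n)/v_n}\,\psi_n(u/v_n)e^{-iu\mu_n/v_n}\,du.
\]
The Edgeworth main term is the inverse Fourier transform of $e^{-u^2/2}\bigl(1+\tfrac{\kappa_3}{6v_n^3}(iu)^3+\tfrac{\kappa_4}{24v_n^4}(iu)^4\bigr)$, which yields exactly $\varphi(x)\{1+\tfrac{\kappa_3}{6v_n^3}H_3(x)+\tfrac{\kappa_4}{24v_n^4}H_4(x)\}$. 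We split the $u$-integral into three zones: $|u|\le \varepsilon v_n$ (equivalently $|t|\le\varepsilon$), $\varepsilon\le|t|\le\pi/2$, and the Gaussian tails of the main term for $|u|>\varepsilon v_n$. The tails are trivially $O(e^{-cv_n^2})$, which is negligible against $\mathcal R_n$ since $v_n^2\asymp n$ or $n\log n$.

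\emph{Small $t$.} Here we need a cumulant expansion
\[
\log\psi_n(t)=i\mu_nt-\tfrac{v_n^2t^2}{2}+\sum_{r=3}^{5}\tfrac{\kappa_r^{(q)}(n)}{r!}(it)^r+O(\cdot),
\]
valid uniformly for $|t|\le\varepsilon$, together with the cumulant sizes $\kappa_r^{(q)}(n)$. We get these from the transition rule \eqref{eq:transition}. Because the conditional law of $S_{n+1}$ given $S_n$ depends linearly on $S_n$, we have
\[
\mathbb{E}[e^{itS_{n+1}}\mid\mathcal F_n]=e^{itS_n}\Bigl(\cos t+i\tfrac{\alpha S_n}{n}\sin t\Bigr),
\]
so $\psi_{n+1}(t)=\cos t\,\psi_n(t)+\tfrac{\alpha}{n}\sin t\,\psi_n'(t)$. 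This first-order linear recursion is what the ``elephant polynomials'' encode. Iterating it gives the moments (hence cumulants) exactly as polynomials in $\beta$. Sums of the form $\sum_j j^{-1}(j/n)^{2k\alpha}$ produce the scales $G_k(n)$. In the end we obtain $\kappa_3=O(|\beta| n^{\alpha}\cdot\text{stuff})$ (odd cumulants vanish when $\beta=0$), $\kappa_4=O(G_2(n))$, $\kappa_5=O(|\beta|\,n^{\alpha}G_2(n))$ and $\kappa_6=O(G_3(n))$. For $\alpha\le0$ the variance is $\asymp n$ and all cumulants are $O(n)$, which gives the $n^{-2}$ remainder after expansion to sixth order. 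Standard Taylor arguments of Esseen type (expand $e^{\text{higher cumulants}}$, with $|u|^k e^{-u^2/4}$ integrable) then bound the zone-one error by
\[
O\bigl(\kappa_6/v_n^6+|\kappa_5|/v_n^5+\kappa_3^2/v_n^6+\kappa_3\kappa_4/v_n^7\bigr),
\]
divided appropriately by the normalization. This matches $\mathcal R_n(\alpha,\beta)$ term by term, e.g.\ $G_3(n)n^{-3}$ and $|\beta|n^{\alpha-5/2}G_2(n)$, and at $\alpha=1/2$ it gives $\mathcal H_n^{-3}+|\beta|\mathcal H_n^{-5/2}$. For the Taylor remainder to be controlled uniformly in $|t|\le\varepsilon$, we also need a bound $|\psi_n(t)|\le e^{-cv_n^2t^2}$ there, which should follow from the same recursion by a Gronwall-type estimate on $\log\psi_n$.

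\emph{Large $t$: the main obstacle.} For $\varepsilon\le|t|\le\pi/2$ we need $|\psi_n(t)|=O(\mathcal R_n/v_n)$, and in fact exponentially small, uniformly in $q$. The recursion alone is awkward here, because $\psi_n'$ is not controlled by $\psi_n$. Instead we use the random recursive tree representation. Attaching $n$ to $U_n$ builds a uniform random recursive tree, and $X_j$ equals $X_1$ times the product of the $\eta$'s along the path from $j$ to the root. If we condition on the tree and on all $\eta$'s except those at leaves (or at vertices whose subtree is a single vertex), then the leaf increments are conditionally independent $\pm1$ variables with mean $\pm\alpha$. The conditional characteristic function therefore has modulus at most $\prod_{\text{leaves}}|\cos t+i\alpha\sin t|$, and $|\cos t+i\alpha\sin t|\le 1-c(\varepsilon,p)<1$ on $\varepsilon\le|t|\le\pi/2$ when $|\alpha|<1$. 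This step needs $p<1$, and also $p>0$; for $p=0$ we have $\alpha=-1$ and must instead use vertices carrying a leaf child, or simply note that nonleaf structure still randomizes. A uniform random recursive tree has about $n/2$ leaves with exponentially small probability of fewer than $n/4$, so $|\psi_n(t)|\le e^{-cn}$. This bound is uniform in $q$ because it does not use $X_1$. The constants in zone one are also uniform in $q$, since the cumulants are polynomial in $\beta\in[-1,1]$. Combining the three zones gives \eqref{eq:edgeworth}. The final convergence claim follows because $\kappa_3/v_n^3\to0$, $\kappa_4/v_n^4\to0$, $\mathcal R_n\to0$, and $H_j\varphi$ is bounded.
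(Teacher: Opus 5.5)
Your architecture (half-period inversion, an inner cumulant zone, and the leaf-randomization bound of Lemma~\ref{lem:fourier} away from the origin) matches the paper's, but the inner zone has a genuine gap. A cumulant expansion of $\log\psi_n$ valid uniformly on a \emph{fixed} interval $|t|\le\varepsilon$ does not exist for $1/2<p\le3/4$. For $q=1/2$ one has $\psi_n(t)=F_n(\cos t)=(\cos t)^{\delta_n}\prod_j(1-b_{n,j}\sin^2t)$ with every $b_{n,j}>1$, so $\psi_n$ vanishes at $\sin^2t=1/b_{n,j}$. Moreover $\max_jb_{n,j}\to\infty$: by \eqref{eq:rootrec}, $\Theta_k(n)\ge\tfrac12h_{2k}(n)\asymp n^{2k\alpha}$, while $\Theta_k(n)-\tfrac{\delta_n}2\le(\max_jb_{n,j})^{k-1}\Theta_1(n)$ and $\Theta_1(n)=O(n\log n)$; take $2k\alpha>1$. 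Thus $\log\psi_n$ has singularities tending to $0$, and any expansion must live on a shrinking disk; the paper uses $|z|\le c_\alpha G_2(n)^{-1/4}$, and $c/\sqrt n$ at $p=3/4$. Even on such a disk, the numbers $\kappa_3,\dots,\kappa_6$ produced by the moment recursion do not bound the Taylor remainder of $\log\psi_n$ at $t\ne0$. That requires control of $\log\psi_n$ on the whole disk, i.e.\ knowing where $\psi_n$ vanishes. A ``Gronwall-type'' argument on a recursion involving $\psi_n'$ does not give this. The missing ingredient consists of four pieces:
\begin{itemize}
\item the decomposition $M_n^{(q)}(z)=F_n(\cosh z)+\beta\sinh z\,J_n(\cosh z)$;
\item real- (or imaginary-) rootedness of $F_n$, giving $\log F_n(\cosh z)=\delta_n\log\cosh z+\sum_j\log(1+b_{n,j}\sinh^2z)$, which expands termwise with error $O(\Theta_3(n)|z|^6)$ once $b_{n,j}|\sinh z|^2\le1/2$;
\item the bound $\max_jb_{n,j}\le\Theta_2(n)^{1/2}=O(G_2(n)^{1/2})$ from the $\Theta_k$ recurrences;
\item the positive-residue expansion $J_n/F_n=\sum_jw_{n,j}/(x-\rho_{n,j})$ with $\sum_jw_{n,j}/(1-\rho_{n,j})=h_n$, which yields $|\beta A_n(z)|\le C|\beta|h_n|z|$ and, via Cauchy estimates, the $|\beta|n^{\alpha}G_2(n)|z|^5$ remainder.
\end{itemize}
For $p\le1/2$ a fixed radius is fine because $0<b_{n,j}\le1$, but the remainder must still come from this structure.

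Shrinking the inner zone to $|u|\le T_n$, with $u=v_nt$, forces you to bound $|\psi_n|$ on $T_n/v_n\le|t|\le\varepsilon$. For $p<3/4$ your tree argument does this if you keep its full output $|\psi_n(t)|\le\mathbb E[(1-(1-\alpha^2)\sin^2t)^{L_n/2}]\le3e^{-cn\sin^2t}$, rather than only $e^{-cn}$ on $|t|\ge\varepsilon$, and take $T_n^2=M\log n$. At $p=3/4$ it cannot. For $q=1/2$, $\psi_n$ vanishes at $\sin^2t=1/\max_jb_{n,j}\asymp1/n$, since $\max_jb_{n,j}\ge(\Theta_3-\tfrac{\delta_n}2)/(\Theta_2-\tfrac{\delta_n}2)\gtrsim n$ and $\max_jb_{n,j}\le\Theta_2^{1/2}=O(n)$. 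This zero sits at $|u|\asymp\sqrt{\log n}$, so no expansion reaches further. Meanwhile the tree bound gives only $e^{-cu^2/\log n}$, whose integral over $|u|\gtrsim\sqrt{\log n}$ is of order $\sqrt{\log n}$. The paper therefore proves a separate window estimate, $|\chi_n^{(q)}(\theta)|\le C\{1+\sqrt{n\sin^2\theta}(1+n\sin^2\theta)\}e^{-cn\mathcal H_n\sin^2\theta}$ for $\sin^2\theta\le\eta\mathcal H_n/n$ (Lemma~\ref{crit:lem:window}, again via the product formula and positive residues). Beyond that window the tree bound contributes $O(n^{-c\eta})$. Your asserted Gaussian bound $e^{-cv_n^2t^2}$ on a fixed interval is unproved, and it is not what the argument needs. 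Finally, at $p=0$ the leaf signs are deterministic ($\eta\equiv-1$), so leaf randomization gives no decay, and your fallback is not an argument. The paper instead uses the product formula at $\alpha=-1$ (Lemma~\ref{lem:minus-one-product}, $0<b_{n,j}<1$) to get $|\chi_n(\theta)|\le e^{-\sigma_n^2\sin^2\theta/2}$, together with the $q$-independence of the law for $n\ge2$.
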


The following theorem will be used to establish the optimal Berry--Esseen bounds in Theorem \ref{thm:kolmogorov}, which in turn yield Corollary \ref{cor:usual-kolmogorov}.

\begin{theorem}
\label{lem:kolmogorov-edgeworth}
Fix $0\le p\le3/4$ and set $Z_n=(S_n-\mu_n)/v_n$ for sufficiently large $n$. Uniformly in $q\in[0,1]$,
\begin{align}
\sup_{x\in\R}
\Bigg|
\mathbb{P}_q(Z_n\le x)
-\Phi(x)+
\frac{\kappa_3^{(q)}(n)}{6v_n^3}
\varphi(x)H_2(x)+
\frac{\kappa_4^{(q)}(n)}{24v_n^4}
\varphi(x)H_3(x)
\Bigg|
=
O_p\left(
\frac1{v_n}
+
\mathcal R_n(\alpha,\beta)
\right).
\label{eq:kolmogorov-edgeworth}
\end{align}
\end{theorem}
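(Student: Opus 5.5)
The plan is to derive \eqref{eq:kolmogorov-edgeworth} from the local expansion of Theorem~\ref{thm:edgeworth} by summation, rather than by a second Fourier analysis. Fix $x$ and write $\mathbb{P}_q(Z_n\le x)=\sum_{s}\mathbb{P}_q(S_n=s)$, where $s$ runs over integers with $s\equiv n\pmod 2$ and $s\le\mu_n+v_nx$. The lattice spacing is $2$, so in the variable $y=(s-\mu_n)/v_n$ the mesh is $h=2/v_n$. Substituting \eqref{eq:edgeworth} gives
\[
\mathbb{P}_q(Z_n\le x)=\sum_{y\in\Lambda_n,\,y\le x} h\,g_n(y)+E_n(x),
\qquad
g_n=\varphi\Bigl(1+\tfrac{\kappa_3}{6v_n^3}H_3+\tfrac{\kappa_4}{24v_n^4}H_4\Bigr),
\]
where $\Lambda_n=(2\Z+n-\mu_n)/v_n$.

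\textbf{Step 1: the local error.} The summed local error $E_n(x)$ is the delicate term. A crude count of $O(v_n)$ lattice points multiplied by $O(\mathcal R_n)$ gives only $v_n\mathcal R_n$, which is too large. I would therefore not sum the uniform local bound directly. Instead I would first prove a weighted form of the local expansion, with error $O_p(\mathcal R_n(1+|y|)^{-2})$. This form should come out of the same characteristic-function argument used for Theorem~\ref{thm:edgeworth}: integrate by parts twice in the inversion formula, using derivative bounds on the characteristic-function remainder. Those derivative bounds should follow from the elephant-polynomial cumulant control together with the random recursive tree global bounds. Summed against $h$, the weighted error is $O(\mathcal R_n)$.

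If this weighted version is unavailable, the fallback is a truncation. For $|y|>K$ with $K$ fixed, I would bound the tails by Chebyshev-type moment bounds, namely $\mathbb{P}(|Z_n|>K)$, since fourth moments of $Z_n$ are bounded. That only gives smallness of order $K^{-4}$, not the rate $\mathcal R_n$. So in that case I would instead use the standard smoothing-inequality route with the Esseen lemma: take truncation $T\asymp v_n$ in frequency and use the Fourier estimates underlying Theorem~\ref{thm:edgeworth}. The lattice nature then contributes exactly the $1/v_n$ term. I regard Step 1 as the main obstacle, and I would attempt the Esseen route first because it reuses the paper's Fourier bounds directly.

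\textbf{Step 2: the main sum.} Compare the Riemann sum $\sum_{y\le x}h\,g_n(y)$ with $\int_{-\infty}^x g_n$. Since $g_n$ is smooth with bounded derivative and integrable tails, Euler--Maclaurin gives an error of order $h\sup|g_n|=O(1/v_n)$. This error is the sawtooth (lattice) correction, which is genuinely of order $1/v_n$ and accounts for the first term in the bound.

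\textbf{Step 3: the integral.} Finally, $\int_{-\infty}^x\varphi H_k=-\varphi(x)H_{k-1}(x)$. Hence
\[
\int_{-\infty}^x g_n=\Phi(x)-\frac{\kappa_3}{6v_n^3}\varphi H_2-\frac{\kappa_4}{24v_n^4}\varphi H_3,
\]
which is exactly the approximant in \eqref{eq:kolmogorov-edgeworth}.

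\textbf{Uniformity.} All constants are uniform in $q\in[0,1]$, because the cumulant ratios $\kappa_r/v_n^r$ are bounded uniformly in $q$ and the inputs from Theorem~\ref{thm:edgeworth} are uniform. Combining Steps 1--3 gives \eqref{eq:kolmogorov-edgeworth}.
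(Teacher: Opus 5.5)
The smoothing-inequality route you say you would attempt first is exactly the paper's proof, and it gives \eqref{eq:kolmogorov-edgeworth} by itself. Once you take it, Steps~1--2 are unnecessary and should be dropped rather than combined with it.

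Apply Esseen's inequality directly to $\mathbb{P}_q(Z_n\le x)$ and the smooth function $G_n(x)=\int_{-\infty}^x g_n$ of your Step~3. It cannot be applied to the step function $\sum_{y\le x}h\,g_n(y)$, because that function has no bounded derivative.

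Since $g_n$ is a signed density, you need the bounded-variation form of the inequality. That form requires $\int_\R g_n=1$ and bounds on $\int_\R|g_n|$ and $\sup_x|g_n(x)|$ that are uniform in $q$; these follow from \eqref{eq:anbounds}--\eqref{eq:bnbounds}.

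Take $T=\pi v_n/2$, the edge of the Fourier period. With $f_n(\xi)=\mathbb{E}_qe^{\mathrm{i}\xi Z_n}$ one has $|f_n(\pi v_n)|=1$, so $T$ cannot be pushed beyond order $v_n$. The term $C/T$ is then precisely the $1/v_n$ in the statement.

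Write $\widehat g_n(\xi)=e^{-\xi^2/2}\{1+\kappa_3^{(q)}(n)(\mathrm{i}\xi)^3/(6v_n^3)+\kappa_4^{(q)}(n)(\mathrm{i}\xi)^4/(24v_n^4)\}$ for the Fourier transform of $g_n$. The integral $\int_{-T}^{T}|f_n-\widehat g_n|\,|\xi|^{-1}\,d\xi$ is $O_p(\mathcal R_n(\alpha,\beta))$, for two reasons:
\begin{itemize}
\item on $|\xi|\le T_n$, the bound \eqref{ineq-Gauss} carries at least a factor $|\xi|^5$, so division by $|\xi|$ is harmless;
\item on $T_n<|\xi|\le T$, one has $|\xi|^{-1}\le1$, and the tail estimates \eqref{eq:subcriticaltail}, \eqref{eq:criticalmiddletail}, \eqref{eq:criticaloutertail}, and \eqref{eq:gaussiantail} apply.
\end{itemize}

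As the backbone of the argument, however, your weighted local expansion with error $O(\mathcal R_n(1+|y|)^{-2})$ is a genuine gap, not a routine variant of Theorem~\ref{thm:edgeworth}. Integrating by parts twice requires $L^1$ control of $\partial_\xi^2(f_n-\widehat g_n)$ over the whole period. Every estimate in the paper outside the analytic disk concerns only $|\chi_n^{(q)}|$.

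The difficulty is sharpest at $p=3/4$. In the window $\sqrt{\log\mathcal{H}_n}\lesssim|\xi|\lesssim\mathcal{H}_n$, the tree bound of Lemma~\ref{lem:fourier} decays only like $e^{-c\xi^2/\mathcal{H}_n}$. That is why the product and residue argument of Lemma~\ref{crit:lem:window} was needed even for the modulus. You would need an analogue of that lemma for two derivatives, and nothing in your sketch provides it.

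Note also that the crude count loses only in the tails. Each atom carries a factor $h=2/v_n$, so summing the uniform local error over $|y|\le K$ costs only $O(K\mathcal R_n)$. But making the tail mass beyond $K$ negligible forces $K\to\infty$. So even with a sub-Gaussian tail bound you lose a growing factor $K$ wherever $\mathcal R_n(\alpha,\beta)$ dominates $1/v_n$, for instance at $p=3/4$.

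The smoothing inequality avoids all of this because it uses only $|f_n-\widehat g_n|$, which the proof of Theorem~\ref{thm:edgeworth} already controls.
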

In both expansions, products of the cumulant corrections, including the squared third-cumulant term, are absorbed into the remainder.

\subsection{Main results: Optimal rates of Gaussian approximation}
Tables~\ref{tab:local-rates} and~\ref{tab:kolmogorov-rates}
summarize the optimal rates of convergence to zero
of the local Gaussian approximation error and
the distribution-function error, respectively,
under the indicated normalizations, for fixed $p,q$.
The precise definitions of these errors are given below.
\begin{table}[H]
  \centering
  \caption{Optimal uniform local CLT rates (Theorem~\ref{thm:localrate} and Corollary~\ref{cor:usual-local}).
  }
  \label{tab:local-rates}
  \begingroup
  \renewcommand{\arraystretch}{1.3}
  \setlength{\tabcolsep}{5pt}
  \begin{tabular}{lcc}
    \hline
    Normalization & Symmetric: $q=1/2$ & Asymmetric: $q\ne1/2$ \\
    \hline
    $\displaystyle \frac{S_n-\mu_n}{v_n}$
    &
    $\begin{cases}
      n^{-1},          & 0\le p<5/8,\\
      n^{-1}\log n,    & p=5/8,\\
      n^{8p-6},        & 5/8<p<3/4,\\
      (\log n)^{-2},   & p=3/4
    \end{cases}$
    &
    $\begin{cases}
      n^{-1},          & 0\le p\le7/12,\\
      n^{6p-9/2},      & 7/12<p<3/4,\\
      (\log n)^{-3/2}, & p=3/4
    \end{cases}$
    \\[1.2em]
    $\begin{cases}
  S_n/\sqrt{n},       & p<3/4,\\
  S_n/\sqrt{n\log n}, & p=3/4
\end{cases}$
    &
    $\begin{cases}
      n^{-1},          & 0\le p\le1/2,\\
      n^{4p-3},        & 1/2<p<3/4,\\
      (\log n)^{-1},   & p=3/4
    \end{cases}$
    &
    $\begin{cases}
      n^{-1},          & 0\le p\le1/4,\\
      n^{2p-3/2},      & 1/4<p<3/4,\\
      (\log n)^{-1/2}, & p=3/4
    \end{cases}$
    \\[1em]
    \hline
  \end{tabular}
  \endgroup
\end{table}
The mechanisms behind these phase transitions
are explained in Remarks~\ref{rem:local-transitions}
and~\ref{rem:usual-local-transitions}.
\begin{table}[H]
  \centering
  \caption{Optimal CLT rates
  (Theorem~\ref{thm:kolmogorov} and Corollary~\ref{cor:usual-kolmogorov}).
  }
  \label{tab:kolmogorov-rates}
  \begingroup
  \renewcommand{\arraystretch}{1.3}
  \setlength{\tabcolsep}{5pt}
  \begin{tabular}{lcc}
    \hline
    Normalization & Symmetric: $q=1/2$ & Asymmetric: $q\ne1/2$ \\
    \hline
    $\displaystyle \frac{S_n-\mu_n}{v_n}$
    &
    $\begin{cases}
      n^{-1/2},        & 0\le p\le11/16,\\
      n^{8p-6},        & 11/16<p<3/4,\\
      (\log n)^{-2},   & p=3/4
    \end{cases}$
    &
    $\begin{cases}
      n^{-1/2},        & 0\le p\le2/3,\\
      n^{6p-9/2},      & 2/3<p<3/4,\\
      (\log n)^{-3/2}, & p=3/4
    \end{cases}$
    \\[1.2em]
    $\begin{cases}
  S_n/\sqrt{n/(3-4p)},       & p<3/4,\\
  S_n/\sqrt{n\log n}, & p=3/4
\end{cases}$
    &
    $\begin{cases}
      n^{-1/2},        & 0\le p\le5/8,\\
      n^{4p-3},        & 5/8<p<3/4,\\
      (\log n)^{-1},   & p=3/4
    \end{cases}$
    &
    $\begin{cases}
      n^{-1/2},        & 0\le p\le1/2,\\
      n^{2p-3/2},      & 1/2<p<3/4,\\
      (\log n)^{-1/2}, & p=3/4
    \end{cases}$
    \\[1em]
    \hline
  \end{tabular}
  \endgroup
\end{table}
The mechanisms behind these phase transitions
are explained in Remarks~\ref{rem:kolmogorov-transitions}
and~\ref{rem:usual-kolmogorov-transitions}.

\subsubsection{Optimal rates in the uniform local CLT}
We write $a_n\asymp b_n$ if $c b_n\le a_n\le C b_n$ for all sufficiently large $n$, with constants $0<c\le C<\infty$ that may depend on fixed $p,q$.
Define the local Gaussian approximation error under exact standardization by
\begin{align*}
E_n(p,q)
\coloneqq 
\sup_{\substack{s\in\Z\\ s\equiv n\pmod 2}}
\left|
\frac{v_n}{2}\mathbb{P}_q(S_n=s)
-
\varphi\left(\frac{s-\mu_n}{v_n}\right)
\right|
 .
\end{align*}

The following theorem gives the optimal rates of convergence in the uniform local central limit theorem under exact standardization.
\begin{theorem}\label{thm:localrate}
Fix $p\in[0,3/4]$. For $q=1/2$, as $n\to\infty$,
\[
E_n(p,1/2)
\asymp
\begin{cases}
n^{-1},        & 0\le p<5/8,\\
n^{-1}\log n,  & p=5/8,\\
n^{8p-6},      & 5/8<p<3/4,\\
(\log n)^{-2}, & p=3/4.
\end{cases}
\]
For each fixed $q\in[0,1]\setminus\{1/2\}$,
\[
E_n(p,q)
\asymp
\begin{cases}
n^{-1},         & 0\le p\le7/12,\\
n^{6p-9/2},      & 7/12<p<3/4,\\
(\log n)^{-3/2}, & p=3/4.
\end{cases}
\]
\end{theorem}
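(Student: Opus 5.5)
Throughout I write $x_s=(s-\mu_n)/v_n$. The plan is to read off both the upper and lower bounds from Theorem~\ref{thm:edgeworth}. By \eqref{eq:edgeworth},
\[
E_n(p,q)=\sup_{s}\Bigl|\varphi(x_s)\Bigl\{\tfrac{\kappa_3^{(q)}(n)}{6v_n^3}H_3(x_s)+\tfrac{\kappa_4^{(q)}(n)}{24v_n^4}H_4(x_s)\Bigr\}\Bigr|+O_p(\mathcal R_n(\alpha,\beta)).
\]
So everything reduces to two tasks. The first is precise asymptotics of the normalized cumulants $\kappa_3^{(q)}(n)/v_n^3$ and $\kappa_4^{(q)}(n)/v_n^4$. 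The second is comparing them with $\mathcal R_n$.

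\textbf{Step 1: cumulant asymptotics.} I would compute $\kappa_3$ and $\kappa_4$ from the exact moment recursions implied by \eqref{eq:transition}. These give linear recursions of the form $\mathbb{E}[S_{n+1}^r]=(1+r\alpha/n)\mathbb{E}[S_n^r]+\dots$, presumably what the paper's elephant polynomials encode.

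For $q=1/2$, $\kappa_3=0$ by symmetry. For $\kappa_4$, the recursion for $\mathbb{E}[S_n^4]$ has homogeneous growth $n^{4\alpha}$ and forcing of order $n\cdot\mathbb{E}[S_n^2]/n\sim n$. After subtracting $3v_n^4$, I expect
\[
\kappa_4(n)\asymp G_2(n)\ (\text{up to sign}) = n,\ n\log n,\ n^{4\alpha}
\]
according as $4\alpha<1$, $4\alpha=1$, $4\alpha>1$, with a nonzero constant. Then $\kappa_4/v_n^4\asymp n^{-1}$, $n^{-1}\log n$, $n^{4\alpha-2}=n^{8p-6}$. At $\alpha=1/2$ the analogous computation should give $\kappa_4/v_n^4\asymp(\log n)^{-2}$.

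For $q\ne1/2$ with $\alpha>0$, $\kappa_3$ picks up a term $\beta n^{\alpha}G_1$-type of order $|\beta| n^{3\alpha}$ in the regime $2\alpha>1/2$. Hence $\kappa_3/v_n^3\asymp n^{3\alpha-3/2}=n^{6p-9/2}$ once $6p-9/2>-1/2$, and at criticality $\kappa_3/v_n^3\asymp(\log n)^{-3/2}$. In the remaining subregimes $\kappa_3/v_n^3=O(n^{-1/2})$, and $E_n$ then contains a genuine $n^{-1/2}$-term. This conflicts with the table's $n^{-1}$ for $p\le7/12$, so I must check whether $\kappa_3/v_n^3$ is really $O(n^{-1})$ there. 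For $p\le1/2$, $\kappa_3$ is $O(1)$ (for $p=0$ or $1/2$) or its leading $n$-order coefficient vanishes by a parity or cancellation identity. I would verify this cancellation explicitly from the recursion. In the relevant ranges, the $\kappa_3$ term has magnitude $\max(n^{-1},n^{3\alpha-3/2})$ and threshold $3\alpha-3/2=-1\iff p=7/12$, matching the table.

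\textbf{Step 2: the upper bound.} With the asymptotics in hand, it remains to check that $\mathcal R_n(\alpha,\beta)$ is at most the claimed rate. Since $\|\varphi H_3\|_\infty$ and $\|\varphi H_4\|_\infty$ are finite, the upper bound follows.

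For $q=1/2$: $G_3(n)n^{-3}$ equals $n^{-2}$, or $n^{6\alpha-3}$, which is dominated by $n^{4\alpha-2}$ exactly when $2\alpha<1$. It is also $\le n^{-1}$. At $\alpha=1/2$, $\mathcal H_n^{-3}=o((\log n)^{-2})$.

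For $q\ne1/2$: $n^{\alpha-5/2}G_2(n)\le\max(n^{\alpha-3/2},n^{5\alpha-5/2})\log n$. This is $o(n^{-1})$ when $\alpha<1/2$ and $5\alpha-5/2<\max(-1,3\alpha-3/2)$, which holds. At criticality, $\mathcal H_n^{-5/2}=o((\log n)^{-3/2})$.

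\textbf{Step 3: the lower bound.} In each regime, exactly one correction term dominates the remainder. Its coefficient is $c_n$ with $|c_n|\asymp$ the claimed rate, and $\mathcal R_n=o(\text{rate})$ whenever the rate is not $n^{-1}$. Since $\varphi H_j$ is a fixed nonzero function and the lattice $\{x_s\}$ has mesh $2/v_n\to0$, I choose $s$ with $x_s$ near a maximizer of $|\varphi H_4|$ (say $x=0$, where $\varphi(0)H_4(0)=3\varphi(0)$) or of $|\varphi H_3|$. This gives $E_n\ge c|c_n|-o(\text{rate})$.

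When both corrections are of the same order ($n^{-1}$ in the asymmetric case at $p\le7/12$), the sum $a\varphi H_3+b\varphi H_4$ with $(a,b)\ne0$ is still a nonzero function, so its sup-norm is $\asymp|a|+|b|$.

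The delicate case is $\mathcal R_n\asymp n^{-1}$-comparable rates. For $p=0$ these are $n^{-2}$ and fine. For $0<\alpha$ small in the asymmetric case, $|\beta|n^{\alpha-3/2}$ is $o(n^{-1})$ only if $\alpha<1/2$, which holds.

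\textbf{Main obstacle.} I expect the main obstacle to be Step 1: getting exact nonvanishing leading constants for $\kappa_4$ (and $\kappa_3$) in every subregime, including the boundary $\log$ cases and the cancellation that makes $\kappa_3/v_n^3=O(n^{-1})$ for small $p$. I would attack this by solving the moment recursions in closed form via products $\prod(1+r\alpha/j)$, subtracting the Gaussian parts, and extracting leading terms with Euler–Maclaurin sums.
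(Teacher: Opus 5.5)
Your Steps~2 and~3 are exactly the paper's argument. Read $E_n(p,q)$ off Theorem~\ref{thm:edgeworth}, check that $\mathcal R_n(\alpha,\beta)$ is of smaller order than the leading correction, and pass from the lattice $\{x_s\}$ (mesh $2/v_n\to0$) to the supremum over $\mathbb{R}$ as in Lemma~\ref{lem:profiles}. At $p=7/12$, linear independence of $\varphi H_3$ and $\varphi H_4$ handles the equal-order case, as you say.

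The gap is Step~1, which carries all the content of the lower bounds and is left conjectural, partly with wrong statements. There is no hidden cancellation to find and no $n^{-1/2}$ term to rule out. Combining \eqref{eq:m1-rec}--\eqref{eq:m3-rec} gives a recursion for the cumulant itself,
\[
\kappa_3^{(q)}(n+1)=\Bigl(1+\frac{3\alpha}{n}\Bigr)\kappa_3^{(q)}(n)-\frac{2\alpha\mu_n}{n}\Bigl(1+\frac{3\alpha v_n^2}{n}-\frac{\alpha^2\mu_n^2}{n^2}\Bigr),
\]
whose forcing is $O(|\beta|n^{\alpha-1})$, with no constant term. Hence $\kappa_3^{(q)}(n)=O(|\beta|n^{3\alpha})$ for every $0<\alpha<1/2$. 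For $\alpha\le0$ it is $O(|\beta|h_n)$ by Lemma~\ref{neg:lem:log}, \eqref{eq:independentcums} and \eqref{eq:endpoint}. So $\kappa_3^{(q)}(n)/v_n^3=o(n^{-1})$ for $p<7/12$ without any extra identity. Your condition ``once $6p-9/2>-1/2$'' is the Kolmogorov threshold $p=2/3$ and plays no role in the local problem.

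More seriously, you never say how the leading constants are shown to be nonzero. Extracting them in closed form does not settle this: the limit $\gamma_3(\alpha,\beta)$ of $n^{-3\alpha}\kappa_3^{(q)}(n)$ combines a term linear in $\beta$ with a positive multiple of $\beta^3$, and has no evident sign.

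\begin{itemize}
\item \textbf{Third cumulant.} The paper's device (Proposition~\ref{prop:third}) is the sign of the forcing above. For $\beta>0$ the bracket is at least $1-\alpha^2$ because $|\mu_n|\le n$, so every forcing term is strictly negative, and $\kappa_3^{(q)}(1)=-2\beta(1-\beta^2)\le0$. Hence $\kappa_3^{(q)}(n)/h_3(n)$ decreases to a strictly negative limit. At $p=3/4$ the paper instead solves the recursion in harmonic sums and uses $6\log2-8+8/\pi<0$.
\item \textbf{Fourth cumulant, $q=1/2$, $1/4<\alpha<1/2$.} The order $n^{4\alpha}$ of $\kappa_4^{(1/2)}(n)$ appears only after the $n^2$ and $n^{1+2\alpha}$ parts of $\mathbb{E}_{1/2}[S_n^4]$ cancel against $3\sigma_n^4$, so nonvanishing is again not automatic. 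The paper writes $\kappa_4^{(1/2)}=8\Theta_1-12\Theta_2$, with $\Theta_2(n)/h_4(n)$ a series of positive terms. Equivalently, $\kappa_4^{(1/2)}(n+1)=(1+4\alpha/n)\kappa_4^{(1/2)}(n)-2-8\alpha\sigma_n^2/n-12\alpha^2\sigma_n^4/n^2$ has strictly negative forcing for $\alpha>0$.
\item \textbf{Fourth cumulant, $q\ne1/2$, $p\le7/12$.} Your lower bound needs $\kappa_4^{(q)}(n)/n$ to have the same nonzero limit as at $q=1/2$, which Step~1 never addresses. It follows from \eqref{eq:k4identity}, since $\mu_n\kappa_3^{(q)}(n)$ and $\mu_n^4$ are $O(1+n^{4\alpha})=o(n)$ for $\alpha<1/4$.
\end{itemize}

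All of these facts are Propositions~\ref{prop:third} and~\ref{prop:fourth}, which the paper proves beforehand and simply invokes in this proof. With them in hand, your Steps~2 and~3 finish the proof.
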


We next consider the local Gaussian approximation without exact centering and variance normalization. For $0\le p<3/4$, define
\begin{align*}
\widetilde E_n(p,q)
\coloneqq 
\sup_{\substack{s\in\Z\\ s\equiv n\pmod 2}}
\left|
\frac{\sqrt n}{2}\mathbb{P}_q(S_n=s)
-
\sqrt{3-4p}\,
\varphi\left(\sqrt{3-4p}\frac{s}{\sqrt n}\right)
\right|,
\end{align*}
and, for $p=3/4$, define
\begin{align*}
\widetilde E_n(3/4,q)
\coloneqq 
\sup_{\substack{s\in\Z\\ s\equiv n\pmod 2}}
\left|
\frac{\sqrt{n\log n}}{2}\mathbb{P}_q(S_n=s)
-
\varphi\left(\frac{s}{\sqrt{n\log n}}\right)
\right|
 .
\end{align*}

The following corollary gives the optimal rates under these normalizations.
\begin{corollary}\label{cor:usual-local}
Fix $p\in[0,3/4]$. For $q=1/2$, as $n\to\infty$,
\[
\widetilde E_n(p,1/2)
\asymp
\begin{cases}
n^{-1},       & 0\le p\le1/2,\\
n^{4p-3},     & 1/2<p<3/4,\\
(\log n)^{-1},& p=3/4.
\end{cases}
\]
For each fixed $q\in[0,1]\setminus\{1/2\}$,
\[
\widetilde E_n(p,q)
\asymp
\begin{cases}
n^{-1},         & 0\le p\le1/4,\\
n^{2p-3/2},     & 1/4<p<3/4,\\
(\log n)^{-1/2},& p=3/4.
\end{cases}
\]
\end{corollary}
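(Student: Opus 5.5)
We derive Corollary~\ref{cor:usual-local} from Theorem~\ref{thm:localrate}, or more precisely from Theorem~\ref{thm:edgeworth}, by comparing the exactly standardized Gaussian profile with the naive one. Write $\sigma_n=\sqrt{n/(3-4p)}$ for $p<3/4$ and $\sigma_n=\sqrt{n\log n}$ for $p=3/4$. The target density is then $\frac{\sqrt n}{2}\cdot\frac{2}{\sigma_n}\cdot\frac{\sigma_n}{\sqrt n}\varphi(s/\sigma_n)$, suitably normalized. Multiplying $\frac{v_n}{2}\mathbb{P}_q(S_n=s)$ by $\sigma_n/v_n$ gives
\[
\frac{\sigma_n}{2}\mathbb{P}_q(S_n=s)-\varphi\Bigl(\frac{s}{\sigma_n}\Bigr)
=\frac{\sigma_n}{v_n}\Bigl[\frac{v_n}{2}\mathbb{P}_q(S_n=s)-\varphi\Bigl(\frac{s-\mu_n}{v_n}\Bigr)\Bigr]
+\Bigl[\frac{\sigma_n}{v_n}\varphi\Bigl(\frac{s-\mu_n}{v_n}\Bigr)-\varphi\Bigl(\frac{s}{\sigma_n}\Bigr)\right].
\]
Up to the constant factor $\sqrt{3-4p}$, this is the quantity in $\widetilde E_n$. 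The first bracket is $O(E_n(p,q))$ by Theorem~\ref{thm:localrate}. The second bracket is a purely deterministic Gaussian perturbation. It is controlled by two quantities. The first is the relative variance error $\delta_n=v_n^2/\sigma_n^2-1$. The second is the mean shift $\mu_n/v_n$, which is of order $|\beta| n^{\alpha-1/2}$ (and $|\beta|(\log n)^{-1/2}$ at $p=3/4$).

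The key step is an exact asymptotic for $\delta_n$. From the variance formula (computed via the recursion from \eqref{eq:transition}, i.e.\ $\mathbb{E}[S_{n+1}^2]=(1+2\alpha/n)\mathbb{E}[S_n^2]+1$), one gets $\mathbb{E}_q S_n^2=\frac{n}{1-2\alpha}+c_\alpha n^{2\alpha}+O(1)$ for $\alpha<1/2$, $\alpha\neq0$, with $c_\alpha\ne0$ for $\alpha>0$. At $p=3/4$ one gets $\mathbb{E}_qS_n^2=n\mathcal H_n+O(n)$ instead. Subtracting $\mu_n^2\sim\beta^2 n^{2\alpha}/\Gamma(2p)^2$ yields the following:
\begin{itemize}
\item $\delta_n\asymp n^{2\alpha-1}=n^{4p-3}$ for $1/2<p<3/4$;
\item $\delta_n=O(n^{-1})$ for $p\le1/2$;
\item $\delta_n\asymp(\log n)^{-1}$ at $p=3/4$.
\end{itemize}
A Taylor expansion gives the second bracket as
\[
-\tfrac{\delta_n}{2}\varphi(x)H_2(x)+\tfrac{\mu_n}{v_n}\varphi(x)H_1(x)+O\bigl(\delta_n^2+\mu_n^2/v_n^2\bigr),
\]
uniformly in $x=s/\sigma_n$. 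Its supremum is therefore $\asymp\max(|\delta_n|,|\mu_n|/v_n)$ whenever this exceeds $n^{-1}$. The two terms cannot cancel, because $H_1\varphi$ is odd and $H_2\varphi$ is even, so the sup norm of a combination $aH_1\varphi+bH_2\varphi$ is comparable to $|a|+|b|$.

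The corollary then follows by comparing orders. For $q=1/2$ we have $\mu_n=0$. The shift term $\delta_n$ gives $n^{4p-3}$, which dominates $E_n\asymp n^{8p-6}$ or $n^{-1}$ since $8p-6<4p-3$ for $p<3/4$; at $p\le 1/2$ everything is $O(n^{-1})$. For $q\ne1/2$ the mean term gives $n^{2p-3/2}$. This dominates $n^{4p-3}$ and $n^{6p-9/2}$, and it exceeds $n^{-1}$ exactly when $p>1/4$; the logs at $p=3/4$ behave analogously. For the upper bounds we simply add the pieces. For the lower bounds we need $\widetilde E_n\ge \sup|\text{second bracket}|-C E_n$ with $E_n=o(\text{main})$, which holds off the boundary cases.

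The main obstacle is the regimes where the claimed rate is $n^{-1}$ ($p\le1/2$ symmetric, $p\le1/4$ asymmetric). Here both brackets are of the same order and a lower bound could fail by cancellation. To handle them, I would expand to the next order. The Edgeworth expansion \eqref{eq:edgeworth} gives the first bracket as $\kappa_4/(24v_n^4)\varphi H_4$ in the symmetric case, plus a $\kappa_3$ term with $H_3$ in the asymmetric case, up to $o(n^{-1})$. The second bracket contributes $\varphi H_2$ and $\varphi H_1$ terms. Hermite functions of different degrees are linearly independent, so the total is $\gtrsim n^{-1}$ provided $\kappa_4/v_n^4\asymp n^{-1}$ (or that some coefficient is of exact order $n^{-1}$). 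For the asymmetric case at $p=1/4$ the same argument applies with the $H_1$ coefficient. The upper bound $O(n^{-1})$ in these regimes is immediate from the decomposition above.
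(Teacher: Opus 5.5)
Your proposal is correct and follows essentially the paper's route. The paper uses the same splitting \eqref{eq:usual-local-decomposition} into the exact-standardization error plus a deterministic Gaussian perturbation, and the same expansion \eqref{eq:gaussian-perturbation} in the mean shift ($\varphi H_1$) and scale correction ($\varphi H_2$), followed by order comparison. In the $n^{-1}$ regimes it likewise relies on the nonvanishing $\varphi H_4$ coefficient from Proposition~\ref{prop:fourth}. There, your norm-equivalence argument on $\mathrm{span}\{\varphi H_1,\dots,\varphi H_4\}$ replaces the paper's explicit identification of the leading profile.

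Two small corrections, neither affecting the conclusion:
\begin{enumerate}
\item The $\varphi H_2$ coefficient is $+(r_n-1)$, i.e.\ $+\tfrac12$ times your relative variance error, not $-\tfrac12$ times it. The sign is irrelevant for sup norms.
\item At $p=3/4$, $q=1/2$, the lower bound on the variance error of order $(\log n)^{-1}$ comes from $\mathcal H_n-\log n\to\gamma_{\rm E}>0$. It does not follow from $\mathbb E S_n^2=n\mathcal H_n+O(n)$, which only gives an upper bound. Also, for $q\ne1/2$ your claimed $\asymp(\log n)^{-1}$ fails when $4\beta^2/\pi=\gamma_{\rm E}$, though you do not need it there.
\end{enumerate}
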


\subsubsection{Optimal rates in the CLT}
Define the uniform distribution-function error under exact standardization by
\begin{align*}
D_n(p,q)
\coloneqq 
\sup_{x\in\R}
\left|
\mathbb{P}_q\left(\frac{S_n-\mu_n}{v_n}\le x\right)
-\Phi(x)
\right|
 .
\end{align*}
The following theorem gives its optimal rate.

\begin{theorem}\label{thm:kolmogorov}
Fix $p\in[0,3/4]$. For $q=1/2$, as $n\to\infty$,
\[
D_n(p,1/2)
\asymp
\begin{cases}
n^{-1/2},      & 0\le p\le11/16,\\
n^{8p-6},      & 11/16<p<3/4,\\
(\log n)^{-2}, & p=3/4.
\end{cases}
\]
For each fixed $q\in[0,1]\setminus\{1/2\}$,
\[
D_n(p,q)
\asymp
\begin{cases}
n^{-1/2},        & 0\le p\le2/3,\\
n^{6p-9/2},      & 2/3<p<3/4,\\
(\log n)^{-3/2}, & p=3/4.
\end{cases}
\]
\end{theorem}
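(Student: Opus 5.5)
We will derive Theorem~\ref{thm:kolmogorov} from the distribution-function expansion of Theorem~\ref{lem:kolmogorov-edgeworth}, together with precise asymptotics for the normalized cumulants $\kappa_3^{(q)}(n)/v_n^3$ and $\kappa_4^{(q)}(n)/v_n^4$ and a separate lattice lower bound of order $1/v_n$. The upper bound follows from the triangle inequality. Since $\sup_x|\varphi(x)H_2(x)|$ and $\sup_x|\varphi(x)H_3(x)|$ are finite,
\[
D_n(p,q)\le C\Bigl(\frac{|\kappa_3^{(q)}(n)|}{v_n^3}+\frac{|\kappa_4^{(q)}(n)|}{v_n^4}+\frac1{v_n}+\mathcal R_n(\alpha,\beta)\Bigr).
\]

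The first step is to determine the order of the two normalized cumulants. Using the elephant-polynomial moment recursions (or the martingale recursion from \eqref{eq:transition}), we will show the following.

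\emph{Symmetric case} $q=1/2$: $\kappa_3=0$, and $\kappa_4^{(1/2)}(n)\asymp G_2(n)$ with a nonzero constant. Hence $\kappa_4/v_n^4\asymp n^{-1}$, $n^{-1}\log n$, or $n^{4\alpha-2}=n^{8p-6}$, and $\asymp(\log n)^{-2}$ at $p=3/4$.

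\emph{Asymmetric case} $q\ne1/2$: $\kappa_3^{(q)}(n)\asymp \beta n^{\alpha}G_1(n)$, which gives $\kappa_3/v_n^3\asymp n^{\alpha-1/2}=n^{2p-3/2}\cdot n^{0}$ times corrections. Comparing with the table, the relevant order should be $\kappa_3/v_n^3\asymp n^{6p-9/2}$ in the superdiffusive-leaning range. I expect the leading $\beta n^{\alpha}\cdot n$ contribution to cancel after exact centering, leaving $\kappa_3\asymp \beta n^{3\alpha}$ when $3\alpha>1$; this cancellation must be checked explicitly.

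We will also check that $\mathcal R_n$ and the remaining cumulant are of smaller order than the claimed rate in each regime.

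Comparing exponents then gives the thresholds:
\begin{itemize}
\item $n^{8p-6}$ versus $n^{-1/2}$ switches at $p=11/16$;
\item $n^{6p-9/2}$ versus $n^{-1/2}$ switches at $p=2/3$;
\item at $p=3/4$ we have $v_n^{-1}=(n\log n)^{-1/2}$, which is negligible against the logarithmic rates.
\end{itemize}

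For the lower bound we split into two regimes. When the rate is $n^{-1/2}$, i.e.\ $1/v_n$ dominates, we use the lattice structure. Theorem~\ref{thm:edgeworth} gives $\mathbb{P}_q(S_n=s)\asymp 1/v_n$ for $s$ near $\mu_n$. So the distribution function of $Z_n$ has a jump of size $\ge c/v_n$ at some $x_0$ near $0$, while $\Phi$ is continuous. Therefore $D_n\ge c/(2v_n)$, since the sup cannot be small on both sides of the jump.

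When the cumulant term dominates, we evaluate the expansion \eqref{eq:kolmogorov-edgeworth} at a fixed point. In the symmetric case we take $x=x_*$ with $\varphi(x_*)H_3(x_*)\ne0$, e.g.\ $x_*=1$. Then
\[
|\mathbb{P}(Z_n\le x_*)-\Phi(x_*)|\ge c\,|\kappa_4|/v_n^4-O(1/v_n+\mathcal R_n).
\]
The error is of strictly lower order precisely in the regime where the claimed rate exceeds $n^{-1/2}$. In the asymmetric case we take $x=0$, where $H_2(0)=-1$, so the $\kappa_3$ term contributes $\kappa_3/(6v_n^3)\varphi(0)$ and the $H_3$ term vanishes. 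At the boundary exponents ($p=11/16$, $p=2/3$) the two orders coincide, and the lattice lower bound suffices.

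The main obstacle is the exact asymptotics of $\kappa_3^{(q)}(n)$ and $\kappa_4^{(q)}(n)$, including the nonvanishing of their leading constants. This requires solving the linear recursions for the first four moments and tracking the cancellations in the cumulant combinations. I would first try computing $\mathbb{E}[S_{n+1}^k\mid\mathcal F_n]$ from \eqref{eq:transition} to get first-order recursions $m_k(n+1)=(1+k\alpha/n)m_k(n)+\text{lower}$. I would then solve these via the products $\prod(1+k\alpha/j)$ and read off the growth classes $G_k$.
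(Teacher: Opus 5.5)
Your architecture is the same as the paper's. The upper bound is read off from Theorem~\ref{lem:kolmogorov-edgeworth}. The $n^{-1/2}$ lower bound comes from an atom of mass $\gtrsim 1/v_n$; the paper gets the atom from Chebyshev's inequality rather than from Theorem~\ref{thm:edgeworth}, but either works. The cumulant-dominated lower bounds come from evaluating the expansion at $x=1$ when $q=1/2$ (where $H_3(1)=-2$) and at $x=0$ when $q\ne1/2$ (where $H_2(0)=-1$ and $H_3(0)=0$).

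The gap is the cumulant input, which you leave as ``the main obstacle'' and whose provisional statement is wrong. It is not true that $\kappa_3^{(q)}(n)\asymp\beta n^{\alpha}G_1(n)$. In fact $\kappa_3^{(q)}(n)=O_\alpha(|\beta|n^{3\alpha})$ for every $0<\alpha<1/2$, not only when $3\alpha>1$. You need this already for the \emph{upper} bound on $1/2<p\le2/3$: there $\kappa_3^{(q)}(n)/v_n^3=O(n^{-1/2})$ requires $\kappa_3^{(q)}(n)=O(n)$, and your proposal only ``expects'' the $n^{1+\alpha}$ terms to cancel. Rather than solving for raw moments and cancelling, write the recursion for the cumulant itself, as in Proposition~\ref{prop:third}:
\[
\kappa_3^{(q)}(n+1)=\Bigl(1+\frac{3\alpha}{n}\Bigr)\kappa_3^{(q)}(n)-\frac{2\alpha\mu_n}{n}\Bigl(1+\frac{3\alpha v_n^2}{n}-\frac{\alpha^2\mu_n^2}{n^2}\Bigr).
\]
The forcing is $O_\alpha(|\beta|n^{\alpha-1})$. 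After division by $h_3(n+1)\asymp n^{3\alpha}$ the increments are $O_\alpha(|\beta|n^{-2\alpha-1})$, which are summable, so $\kappa_3^{(q)}(n)/h_3(n)$ converges.

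The matching lower bounds for $2/3<p\le3/4$ (with $q\ne1/2$) and for $11/16<p\le3/4$ (with $q=1/2$) also need the leading constants to be nonzero. ``Solving the recursions and reading off the growth classes'' does not deliver this. The closed form gives $n^{-3\alpha}\kappa_3^{(q)}(n)\to\gamma_3(\alpha,\beta)$, a $\beta$-term plus a $\beta^3$-term in Gamma functions. Its bracket $\frac{6\alpha}{\Gamma(1+\alpha)\Gamma(1+2\alpha)}-\frac{3(1+\alpha)}{\Gamma(1+3\alpha)}$ even vanishes at $\alpha=1/2$, so nonvanishing for every $\beta\ne0$ is not visible from the formula.

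The missing idea is a sign argument. In the display above, the parenthesized factor is at least $1-\alpha^2>0$, so the forcing has the strict sign of $-\beta$. The initial value $\kappa_3^{(q)}(1)=-2\beta(1-\beta^2)$ has the same sign or vanishes. Hence the limit of $\kappa_3^{(q)}(n)/h_3(n)$ has strict sign $-\operatorname{sgn}\beta$.

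The fourth cumulant is handled similarly. For $1/4<\alpha<1/2$ the paper writes $\kappa_4^{(1/2)}=8\Theta_1-12\Theta_2$ through the root power sums of Proposition~\ref{prop:rootrec}. Then $\Theta_2(n)/h_4(n)$ is $1/2$ plus partial sums of a convergent series of positive terms, which gives $\kappa_4^{(1/2)}(n)\sim-12D_\alpha n^{4\alpha}$ with $D_\alpha>0$.

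At $p=3/4$, both $\gamma_{3,c}(\beta)\ne0$ and the constant $-(9+\pi^2/2)$ in $\kappa_4^{(1/2)}(n)\sim-(9+\pi^2/2)n^2$ come from the exact harmonic-sum formulas of Appendix~\ref{app:criticalmoments}. These facts are Propositions~\ref{prop:third} and~\ref{prop:fourth}. Once you cite them, together with $\kappa_4^{(q)}(n)=O_\alpha(G_2(n))$ for the upper bound, the rest of your argument coincides with the paper's proof.
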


For $0\le p<3/4$, define
\begin{align*}
\widetilde D_n(p,q)
\coloneqq 
\sup_{x\in\R}
\left|
\mathbb P_q\left(
\frac{S_n}{\sqrt{n/(3-4p)}}\le x
\right)
-\Phi(x)
\right|,
\end{align*}
and, for $p=3/4$, define
\begin{align*}
\widetilde D_n(3/4,q)
\coloneqq 
\sup_{x\in\R}
\left|
\mathbb{P}_q\left(\frac{S_n}{\sqrt{n\log n}}\le x\right)
-
\Phi(x)
\right|
 .
\end{align*}

The following corollary gives its optimal rate.
\begin{corollary}\label{cor:usual-kolmogorov}
Fix $p\in[0,3/4]$. For $q=1/2$, as $n\to\infty$,
\[
\widetilde D_n(p,1/2)
\asymp
\begin{cases}
n^{-1/2},      & 0\le p\le5/8,\\
n^{4p-3},      & 5/8<p<3/4,\\
(\log n)^{-1}, & p=3/4.
\end{cases}
\]
For each fixed $q\in[0,1]\setminus\{1/2\}$,
\[
\widetilde D_n(p,q)
\asymp
\begin{cases}
n^{-1/2},        & 0\le p\le1/2,\\
n^{2p-3/2},      & 1/2<p<3/4,\\
(\log n)^{-1/2}, & p=3/4.
\end{cases}
\]
\end{corollary}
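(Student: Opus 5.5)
The plan is to deduce Corollary~\ref{cor:usual-kolmogorov} from Theorem~\ref{thm:kolmogorov} by comparing the two normalizations directly. Put $s_n=\sqrt{n/(3-4p)}$ for $p<3/4$ and $s_n=\sqrt{n\log n}$ for $p=3/4$. Since
\[
\mathbb P_q(S_n\le xs_n)=\mathbb P_q\bigl(Z_n\le (xs_n-\mu_n)/v_n\bigr),
\]
the triangle inequality gives
\[
|\widetilde D_n-\Delta_n|\le D_n,\qquad
\Delta_n\coloneqq\sup_{x}\left|\Phi\left(\frac{xs_n-\mu_n}{v_n}\right)-\Phi(x)\right|.
\]
So everything reduces to the order of the purely Gaussian discrepancy $\Delta_n$ and its comparison with the rates for $D_n$ in Theorem~\ref{thm:kolmogorov}.

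\textbf{Step 1: asymptotics of $\Delta_n$.} For fixed $p,q$, an elementary Gaussian computation gives
\[
\Delta_n\asymp \frac{|\mu_n|}{v_n}+\left|\frac{s_n}{v_n}-1\right|.
\]
For the lower bound on the mean part, evaluate at $x=0$, where $|\Phi(-\mu_n/v_n)-\Phi(0)|\asymp|\mu_n|/v_n$. For the scale part, when $q=1/2$ (so $\mu_n=0$), evaluate at $x=1$.

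By \eqref{eq:m1_asym}, $|\mu_n|/v_n\asymp|\beta|\,n^{2p-3/2}$ for $p<3/4$ and $\asymp|\beta|(\log n)^{-1/2}$ at $p=3/4$. For the scale I need a second-order expansion of $v_n^2$. The transition rule \eqref{eq:transition} gives the recursion
\[
\mathbb E_q S_{n+1}^2=(1+2\alpha/n)\,\mathbb E_q S_n^2+1 .
\]
Hence $\mathbb E_q S_n^2=a_n\sum_{k\le n}a_k^{-1}$ with $a_n=\prod_{j<n}(1+2\alpha/j)\sim n^{2\alpha}/\Gamma(1+2\alpha)$. Summing $a_k^{-1}$ with its constant term gives
\[
\mathbb E_q S_n^2=\frac{n}{3-4p}+c_p\,n^{4p-2}+O(1)\quad(p<3/4),\qquad
\mathbb E_q S_n^2=n\log n+O(n)\quad(p=3/4),
\]
and subtracting $\mu_n^2=O(n^{4p-2})$ changes only the coefficient $c_p$. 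The point to verify is that this coefficient is nonzero for $1/2<p<3/4$. I expect it to be the analytic continuation of the $p>3/4$ constant in \eqref{vnasymp}, namely a nonzero multiple of $1/\Gamma(4p-2)$, together with the $\beta^2/\Gamma(2p)^2$ correction. For $p\le1/2$ the relative error is $O(n^{-1}\log n)$, and it vanishes for $p=1/2$, $q=1/2$. The upshot is $|s_n/v_n-1|\asymp n^{4p-3}$ for $1/2<p<3/4$, $\asymp(\log n)^{-1}$ at $p=3/4$, and $o(n^{-1/2})$ for $p\le1/2$.

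\textbf{Step 2: comparison and the universal lower bound.} In the symmetric case $\Delta_n\asymp n^{4p-3}$ for $p>1/2$. This exceeds the $D_n$ rate $n^{8p-6}$ (or $n^{-1/2}$) exactly when $p>5/8$, and at $p=3/4$, $(\log n)^{-1}\gg(\log n)^{-2}$. In the asymmetric case $\Delta_n\asymp n^{2p-3/2}$, which dominates both $n^{-1/2}$ and $n^{6p-9/2}$ when $p>1/2$, and $(\log n)^{-1/2}\gg(\log n)^{-3/2}$ at criticality. In these ranges $\widetilde D_n\asymp\Delta_n$ by the two-sided triangle inequality. In the remaining ranges the upper bound $\widetilde D_n\le D_n+\Delta_n=O(n^{-1/2})$ holds. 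For the matching lower bound I use the lattice structure: by Theorem~\ref{thm:edgeworth}, at $s$ nearest $\mu_n$ with $s\equiv n\pmod 2$ one has $\mathbb P_q(S_n=s)\ge c/v_n\asymp n^{-1/2}$. The distribution function of $S_n/s_n$ therefore jumps by $\gtrsim n^{-1/2}$ while $\Phi$ is continuous, so $\widetilde D_n\ge\frac12\cdot$jump $\gtrsim n^{-1/2}$. The same argument gives the $n^{-1/2}$ lower bound in the boundary cases $p=5/8$ and $p=1/2$.

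\textbf{Main obstacle.} The delicate step is the exact second-order variance expansion with a provably nonzero $n^{4p-2}$ coefficient, uniformly handling the $\beta^2$ contribution from $\mu_n^2$. I would first derive the closed form $\mathbb E_q S_n^2=\frac{n}{3-4p}-\frac{\Gamma(n+4p-2)}{(3-4p)\Gamma(4p-2)\,\Gamma(n)}\cdot C$ by solving the recursion with gamma-function products, and then read off the sign of the constant. Everything else is a routine Gaussian computation combined with Theorem~\ref{thm:kolmogorov}.
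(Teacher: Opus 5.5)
Your proposal is correct and follows essentially the same route as the paper: the comparison $|\widetilde D_n-\Delta_n|\le D_n$ (the paper's \eqref{eq:kolm-comparison}, with $\delta_n=\mu_n/w_n$ and $r_n=v_n/w_n$), a first-order Gaussian perturbation in mean and scale, domination by $\Delta_n$ in the large-$p$ ranges, and the atom bound for the $n^{-1/2}$ lower bounds. The step you call the main obstacle is immediate from \eqref{eq:meanvar}, which gives $r_n^2-1=-2\alpha h_2(n)/n-(3-4p)\beta^2h_n^2/n$ with $h_2(n)\sim n^{2\alpha}/\Gamma(1+2\alpha)>0$, so your guessed analytic continuation $\frac{1}{(4p-3)\Gamma(4p-2)}-\frac{\beta^2}{\Gamma(2p)^2}$ is exactly right and strictly negative for $1/2<p<3/4$. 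The one point to tighten is $p=3/4$, $q=1/2$: there your expansion $n\log n+O(n)$ gives only an upper bound on the scale error. The lower bound $\widetilde D_n(3/4,1/2)\gtrsim(\log n)^{-1}$ needs the exact identity $\sigma_n^2=n\mathcal{H}_n$ together with $\mathcal{H}_n-\log n\to\gamma_{\rm E}\neq0$.
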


\begin{remark}
For $q=1$, Qin~%
\cite{Qin2025}*{Proposition 2.4} gives Berry--Esseen upper bounds of the same orders under the normalization of Corollary~\ref{cor:usual-kolmogorov}. The corollary proves their optimality.
\end{remark}

\subsection{Related work and proof strategy}
\subsubsection{Gaussian approximation rates and phase transitions}
Fan, Hu, and Ma~\cite{FanHuMa2021} gave earlier
Berry--Esseen bounds under a martingale normalization. Dedecker, Fan, Hu, and Merlev\`ede~%
\cite{DedeckerFanHuMerlevede2023} subsequently obtained sharper Berry--Esseen bounds
for general initial distributions under a normalization natural
for the martingale representation of the ERW.
Matching lower bounds were not established in their work.

Classical distribution-function Edgeworth expansions for sums of independent random variables are given, for example, in Petrov~%
\cite{Petrov1975}*{Chapter VI, Theorem 7}. They cannot be applied directly to the dependent increments of the ERW.

Hayashi, Oshiro, and Takei~%
\cite{HayashiOshiroTakei2023} determined rates of convergence of the moments of $S_n/\sqrt n$ (or $S_n/\sqrt{n\log n}$ at criticality) to Gaussian moments. Their higher even-moment rates change at $p=1/2$.  Bertoin~%
\cite{Bertoin2022} also identified a transition at $p=1/4$: the expected first return time to the origin is finite if and only if $p<1/4$.

The earlier Berry--Esseen upper bounds change order at
$p=1/2$ in Qin's result and at $p=5/8$ in the result of
Dedecker et al.
As shown in Tables~\ref{tab:local-rates} and
\ref{tab:kolmogorov-rates}, our optimal rates under exact
standardization exhibit several phase transitions.
To the best of our knowledge, the values
$p=7/12,\ 2/3,\ 11/16$ have not previously been identified
as phase transition points in the standard one-dimensional ERW.

Hu~\cite{Hu2026} obtained Berry--Esseen bounds for positively and negatively step-reinforced random walks with general step distributions. For symmetric Rademacher steps, his Theorems 1.1 and 1.2 apply to the ERW with symmetric initial distribution in the ranges $1/2<p\le3/4$ and $0<p<1/2$, respectively. In the former range, translating his parameter into our notation and replacing his variance approximation by the exact variance yields
\[
D_n(p,1/2)=
\begin{cases}
O(n^{-1/2}),&1/2<p<2/3,\\
O(n^{-1/2}\log n),&p=2/3,\\
O(n^{6p-9/2}),&2/3<p<3/4,\\
O((\log n)^{-3/2}),&p=3/4.
\end{cases}
\]
Theorem~\ref{thm:kolmogorov} sharpens these estimates throughout $2/3\le p\le3/4$.

\subsubsection{Relation to local limit theorems}
Classical local Edgeworth expansions for sums of independent lattice variables,
such as Petrov~%
\cite[Chapter VII, Theorem 13]{Petrov1975}, do not apply directly
to the dependent increments of the ERW.
A local limit theorem in the superdiffusive regime $3/4<p<1$ was recently obtained in~%
\cite{TrinasValle2026}.
For $0< p\le3/4$, local estimates appear in Fan--Hu--Ma~%
\cite{FanHuMa2021}
and, with random step sizes, Huang--Fan--Xiang~%
\cite{HuangFanXiang2026}.
Appendix~\ref{app:local} points out a common issue in the proofs of these
local estimates.

Gu\'erin, Laulin, and Raschel~\cite{GuerinLaulinRaschel2025}
also studied exact distribution formulas for the ERW
at $p=0$ and $p=1/4$ with $q=1/2$.
The resulting asymptotics for the return probabilities
$\mathbb{P}_{1/2}(S_{2n}=0)$ are recovered as special cases
of Theorem~\ref{thm:edgeworth}.

To the best of our knowledge, a local CLT uniform in the initial
distribution and lattice position throughout $0\le p\le3/4$
was not previously available.
Our local Edgeworth expansion also yields the optimal convergence
rates for the local Gaussian approximation.

\subsubsection{Proof strategy}
For the simple symmetric random walk, the point probabilities
are given explicitly by binomial coefficients, and Stirling's
formula yields optimal local Gaussian estimates;
see Lawler and Limic~%
\cite[Section~2.5.1]{LawlerLimic2010} for this approach.
For the ERW, however, a comparably tractable closed-form
expression for the point probabilities is not generally available. Instead, we derive the Edgeworth expansions from precise
estimates of the characteristic function, using the elephant
polynomials studied by Gu\'erin, Laulin, and Raschel~%
\cite{GuerinLaulinRaschel2025}.
Esseen's smoothing inequality and Fourier inversion
then yield the distribution-function and local Edgeworth
expansions, respectively. We then use these expansions to determine the optimal rates of convergence in the local CLT and the CLT.

Limit theorems for the ERW have been established using
the P\'olya-urn approach of Baur and Bertoin~%
\cite{BaurBertoin2016} and the martingale approach of Bercu~%
\cite{Bercu2018}.
However, these approaches do not directly yield the main results of this paper.

\subsection{Organization of the paper}
Section~\ref{sec:finite} derives finite-time recurrences and exact moment identities.
Section~\ref{sec:roots} studies elephant polynomials and their roots, and Section~\ref{sec:fourier} establishes bounds on the full Fourier interval.
Section~\ref{sec:analytic} gives analytic logarithmic expansions and uniform remainders;
Section~\ref{sec:cumulants} computes the leading cumulants.
Section~\ref{sec:inversion} establishes the local Edgeworth expansion via Fourier inversion and derives the optimal rates of local Gaussian approximation under exact and asymptotic normalization.
Section~\ref{sec:kolmogorov} proves the distribution-function expansion and derives the optimal rates of convergence in the central limit theorem under both normalizations.

\section{Finite-time recurrences and exact moments}\label{sec:finite}
The conditional expectation satisfies
\begin{align*}
 \mathbb{E}_q[X_{n+1}\mid\mathcal F_n]=\alpha S_n/n
\end{align*}
For $j\ge1$, define
\[
h_j(n)=\prod_{k=1}^{n-1}\left(1+\frac{j\alpha}{k}\right),
\qquad h_n\coloneqq h_1(n),
\]
with the empty product equal to $1$. Reversing the initial sign reverses the entire path, so all even moments of $S_n$ are independent of $q$. Write
\[
\mathcal{H}_n^{(2)}=\sum_{k=1}^n\frac1{k^2},\qquad
\sigma_n^2=\mathbb{E}_q[S_n^2]=\mathbb{E}_{1/2}[S_n^2].
\]
The exact moment formulas of Sch\"utz and Trimper~\cite{SchutzTrimper2004}
can be written in our notation as follows:
\begin{align}\label{eq:meanvar}
\mu_n=\beta h_n,\qquad
\sigma_n^2=
\begin{cases}
\dfrac{n-2\alpha h_2(n)}{1-2\alpha},&\alpha<1/2,\\[4pt]
n\mathcal{H}_n,&\alpha=1/2,
\end{cases}
\qquad
v_n^2=\sigma_n^2-\beta^2h_n^2.
\end{align}

Let $M_n^{(q)}(z)=\mathbb{E}_qe^{zS_n}$ and $\chi_n^{(q)}(\theta)=M_n^{(q)}(\mathrm{i}\theta)$. Since $e^{zX}=\cosh z+X\sinh z$ for $X\in\{-1,1\}$, conditioning gives
\begin{align}\label{Mnzenka}
 M_{n+1}^{(q)}(z)&=\cosh z\,M_n^{(q)}(z)+\frac\alpha n\sinh z\,(M_n^{(q)})'(z),\\
 \notag\chi_{n+1}^{(q)}(\theta)&=\cos\theta\,\chi_n^{(q)}(\theta)
       +\frac\alpha n\sin\theta\,(\chi_n^{(q)})'(\theta).
\end{align}
The initial transform is $M_1^{(q)}(z)=\cosh z+\beta\sinh z$. Define polynomials by $F_1(x)=x$, $J_1(x)=1$, and
\begin{align}
 F_{n+1}(x)&=xF_n(x)-\frac\alpha n(1-x^2)F_n'(x),\label{eq:polyF}\\
 J_{n+1}(x)&=(1+\alpha/n)xJ_n(x)-\frac\alpha n(1-x^2)J_n'(x).\label{eq:polyJ}
\end{align}
The polynomials $F_n$ are precisely the elephant polynomials introduced by Gu\'erin, Laulin, and Raschel~%
\cite{GuerinLaulinRaschel2025}, while $J_n$ forms a second polynomial sequence used in the decomposition below.
Differentiating the induction hypothesis for $M_n^{(q)}(z)$
and substituting the result into~\eqref{Mnzenka},
we obtain the following decomposition by induction,
using the polynomial recurrences~\eqref{eq:polyF} and~\eqref{eq:polyJ}.
\begin{align}\label{eq:decomp}
 M_n^{(q)}(z)=F_n(\cosh z)+\beta\sinh z\,J_n(\cosh z).
\end{align}
\begin{lemma}\label{lem:basic-polynomials}
If $\alpha>-1$, then, for every $n\ge1$,
\[
\deg F_n=n,\qquad \deg J_n=n-1,
\]
and both leading coefficients equal $(1+\alpha)^{n-1}$. Moreover,
\begin{align}\label{guuki}
F_n(-x)=(-1)^nF_n(x),\qquad
J_n(-x)=(-1)^{n-1}J_n(x),
\end{align}
and $F_n(1)=1$, $J_n(1)=h_n$.
\end{lemma}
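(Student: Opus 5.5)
All four claims follow by induction on $n$ directly from the recurrences \eqref{eq:polyF} and \eqref{eq:polyJ}. The base case $n=1$ is immediate: $F_1(x)=x$ has degree $1$ with leading coefficient $1=(1+\alpha)^0$, $F_1(-x)=-F_1(x)$ and $F_1(1)=1$. Likewise $J_1=1$ has degree $0$ with leading coefficient $1$, is even ($(-1)^0=1$), and $J_1(1)=1=h_1$, since $h_1$ is the empty product.

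\emph{Degrees and leading coefficients.} Suppose $F_n(x)=c_nx^n+(\text{lower order})$. Then $xF_n(x)$ has top term $c_nx^{n+1}$. Also $-(1-x^2)F_n'(x)=(x^2-1)F_n'(x)$ has top term $nc_nx^{n+1}$. So the coefficient of $x^{n+1}$ in $F_{n+1}$ is
\[
c_n+\tfrac{\alpha}{n}\cdot nc_n=(1+\alpha)c_n .
\]
Similarly, if $J_n(x)=d_nx^{n-1}+\cdots$, the coefficient of $x^n$ in $J_{n+1}$ is
\[
(1+\alpha/n)d_n+\tfrac{\alpha}{n}(n-1)d_n=(1+\alpha)d_n .
\]
The hypothesis $\alpha>-1$ makes $1+\alpha\neq0$, so the degree really increases by one at each step, and $c_n=d_n=(1+\alpha)^{n-1}$. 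This nonvanishing is the only delicate point; everything else is bookkeeping.

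\emph{Parity.} If $F_n$ has parity $(-1)^n$, then $F_n'$ has parity $(-1)^{n-1}$. Since $x$ is odd and $1-x^2$ is even, both terms of \eqref{eq:polyF} have parity $(-1)^{n+1}$. The same argument applied to \eqref{eq:polyJ} shows that $J_{n+1}$ has parity $(-1)^n$, which is what \eqref{guuki} requires.

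\emph{Values at $1$.} At $x=1$ the factor $1-x^2$ vanishes, so the derivative terms drop out. Hence $F_{n+1}(1)=F_n(1)=1$, and
\[
J_{n+1}(1)=(1+\alpha/n)J_n(1)=(1+\alpha/n)h_n=h_{n+1},
\]
by the definition of $h_n=h_1(n)$.

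As a consistency check, the decomposition \eqref{eq:decomp} at $z=0$ gives $M_n^{(q)}(0)=F_n(1)=1$. Differentiating it at $z=0$ gives $\mu_n=\beta J_n(1)=\beta h_n$, in agreement with \eqref{eq:meanvar}.
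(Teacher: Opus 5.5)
Your proof is correct and follows essentially the same route as the paper. Both argue by induction from \eqref{eq:polyF}--\eqref{eq:polyJ}: comparing top coefficients gives $c_{n+1}=(1+\alpha)c_n$ and $d_{n+1}=(1+\alpha)d_n$ (nonzero since $\alpha>-1$), parity propagates through the recurrences, and at $x=1$ the derivative terms vanish so that $F_n(1)=1$ and $J_n(1)=h_n$.
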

\begin{proof}
The assertions hold at $n=1$ because $F_1(x)=x$ and $J_1(x)=1$.
The recurrences first give the degree bounds $\deg F_n\le n$ and $\deg J_n\le n-1$ by induction. Write
\[
F_n(x)=c_nx^n+\text{lower-degree terms},\qquad
J_n(x)=d_nx^{n-1}+\text{lower-degree terms}.
\]
Comparing the coefficients of $x^{n+1}$ in \eqref{eq:polyF} and $x^n$ in \eqref{eq:polyJ} gives
\[
c_{n+1}=c_n+\frac\alpha n nc_n=(1+\alpha)c_n,
\]
\[
d_{n+1}=\left(1+\frac\alpha n\right)d_n
 +\frac\alpha n(n-1)d_n=(1+\alpha)d_n.
\]
Since $c_1=d_1=1$, both coefficients equal $(1+\alpha)^{n-1}$ and are nonzero when $\alpha>-1$. This proves the exact degree assertions.

For parity, the induction hypothesis $F_n(-x)=(-1)^nF_n(x)$ implies $F_n'(-x)=(-1)^{n+1}F_n'(x)$. 
Thus
\begin{align*}
F_{n+1}(-x)
&=(-x)F_n(-x)-\frac\alpha n(1-x^2)F_n'(-x)\\
&=(-1)^{n+1}\left\{xF_n(x)-\frac\alpha n(1-x^2)F_n'(x)\right\}
 =(-1)^{n+1}F_{n+1}(x).
\end{align*}
Similarly, $J_n'(-x)=(-1)^nJ_n'(x)$ and \eqref{eq:polyJ} imply $J_{n+1}(-x)=(-1)^nJ_{n+1}(x)$.
Finally, evaluation at $x=1$ yields
\[
F_{n+1}(1)=F_n(1),\qquad
J_{n+1}(1)=\left(1+\frac\alpha n\right)J_n(1).
\]
The initial values therefore give $F_n(1)=1$ and
$J_n(1)=\prod_{k=1}^{n-1}(1+\alpha/k)=h_n$. The proof is complete.
\end{proof}

As noted above, every even moment is independent of $q$, whereas every odd moment is $\beta$
times its value at $q=1$. For integers $r\ge1$, expanding
$(S_n+X_{n+1})^r$ and using \eqref{eq:transition} yields the moment recurrence
\begin{align*}
 \mathbb{E}_q[S_{n+1}^r]
 = \sum_{\substack{0\le k\le r\\ k\ \mathrm{even}}}
 \binom{r}{k}\mathbb{E}_q[S_n^{r-k}]
 + \frac{\alpha}{n}
 \sum_{\substack{1\le k\le r\\ k\ \mathrm{odd}}}
 \binom{r}{k}\mathbb{E}_q[S_n^{r-k+1}],
\end{align*}
which is equivalent to the moment recursion derived by
Bercu, Chabanol, and Ruch %
\cite[Section~4]{BercuChabanolRuch2019}.
In particular, we have
\begin{align}
 \mu_{n+1}
 &=
 \left(1+\frac{\alpha}{n}\right)\mu_n,
 \label{eq:m1-rec}\\
 \mathbb{E}_q[S_{n+1}^2]
 &=
 \left(1+\frac{2\alpha}{n}\right)\mathbb{E}_q[S_n^2]+1,
 \label{eq:m2-rec}\\
 \mathbb{E}_q[S_{n+1}^3]
 &=
 \left(1+\frac{3\alpha}{n}\right)\mathbb{E}_q[S_n^3]
 +\left(3+\frac{\alpha}{n}\right)\mu_n,
 \label{eq:m3-rec}\\
 \mathbb{E}_q[S_{n+1}^4]
 &=
 \left(1+\frac{4\alpha}{n}\right)\mathbb{E}_q[S_n^4]
 +\left(6+\frac{4\alpha}{n}\right)\mathbb{E}_q[S_n^2]+1.
 \label{eq:m4-rec}
\end{align}

\section{Polynomial roots and their power sums}\label{sec:roots}

This section investigates the location and interlacing
of the roots of $F_n$ and positive-residue decompositions
of $J_n/F_n$. The endpoint cases $\alpha=0$ and $\alpha=-1$ are treated separately.
\subsection{Root interlacing and positive-residue decompositions}
When $0<\alpha\le1/2$, the interlacing of the zeros of $F_n$ was established by
Gu\'erin--Laulin--Raschel~\cite[Proposition~1]{GuerinLaulinRaschel2025}.
The following lemma gives an alternative proof based on the
monotonicity of $F_{n+1}/F_n$ and further establishes a partial
fraction decomposition of $J_n/F_n$ with positive weights.
\begin{lemma}\label{pos:lem:residues}
Suppose $0<\alpha\le1/2$. All roots $\rho_{n,1},\ldots,\rho_{n,n}$ of $F_n$ are simple and lie in $(-1,1)$. There are strictly positive weights $w_{n,j}$ with $\sum_{j=1}^nw_{n,j}=1$ such that
\begin{align}\label{pos:eq:residues}
 \frac{J_n(x)}{F_n(x)}
 =\sum_{j=1}^n\frac{w_{n,j}}{x-\rho_{n,j}}.
\end{align}
In particular, $\sum_{j=1}^nw_{n,j}/(1-\rho_{n,j})=h_n$.
\end{lemma}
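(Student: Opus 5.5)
The plan is a simultaneous induction on $n$, carrying three claims together. First, $F_n$ has $n$ simple roots in $(-1,1)$. Second, the roots of $F_{n+1}$ strictly interlace those of $F_n$. Third, $J_n/F_n$ has the positive-residue form \eqref{pos:eq:residues}. The base case $n=1$ is immediate: $F_1=x$ has its single root at $0$, and $J_1/F_1=1/x$ with weight $w_{1,1}=1$.

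\emph{Roots of $F_{n+1}$.} Let $\rho_1<\dots<\rho_n$ be the roots of $F_n$. At each $\rho_j$ the recurrence \eqref{eq:polyF} gives
\[
F_{n+1}(\rho_j)=-\frac{\alpha}{n}(1-\rho_j^2)F_n'(\rho_j).
\]
Because the roots are simple, $F_n'(\rho_j)$ alternates in sign as $j$ increases. Since $\alpha>0$ and $1-\rho_j^2>0$, the values $F_{n+1}(\rho_j)$ also alternate in sign. At the endpoints, $F_{n+1}(1)=1>0$. By Lemma~\ref{lem:basic-polynomials}, $F_{n+1}(-1)=(-1)^{n+1}$, and $F_n'(\rho_n)>0$ because the leading coefficient is positive. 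Counting sign changes on $[-1,\rho_1]$, on each $[\rho_j,\rho_{j+1}]$, and on $[\rho_n,1]$ then produces $n+1$ roots in $(-1,1)$, one in each interval. Since $\deg F_{n+1}=n+1$, these are all the roots, they are simple, and they interlace those of $F_n$.

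An equivalent formulation is to write
\[
\frac{F_{n+1}}{F_n}=x-\frac{\alpha}{n}(1-x^2)\sum_j\frac{1}{x-\rho_j}
\]
and note that this function is strictly increasing between consecutive poles. This is the monotonicity argument mentioned in the paper.

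\emph{Positive residues.} Once the roots are known to be simple, partial fractions give \eqref{pos:eq:residues} with $w_{n,j}=J_n(\rho_{n,j})/F_n'(\rho_{n,j})$. The relation $\sum_j w_{n,j}=1$ follows by comparing behaviour at infinity: $J_n/F_n\sim x^{-1}$, because the leading coefficients of $J_n$ and $F_n$ are equal by Lemma~\ref{lem:basic-polynomials}. Evaluating at $x=1$ and using $F_n(1)=1$ and $J_n(1)=h_n$ gives $\sum_j w_{n,j}/(1-\rho_{n,j})=h_n$.

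The real content is the positivity of $w_{n,j}$, which is equivalent to saying that $J_n(\rho_{n,j})$ has the same sign as $F_n'(\rho_{n,j})$. Equivalently, the roots of $J_n$ strictly interlace those of $F_n$ with the correct orientation. To propagate this through the induction, I would evaluate \eqref{eq:polyJ} at a root $\sigma$ of $F_{n+1}$ and eliminate $x$ using \eqref{eq:polyF}, namely
\[
\sigma F_n(\sigma)=\frac{\alpha}{n}(1-\sigma^2)F_n'(\sigma).
\]
Dividing by $F_n(\sigma)$, which is nonzero by interlacing, gives
\[
\frac{J_{n+1}(\sigma)}{F_n(\sigma)}
=\Bigl(1+\frac{\alpha}{n}\Bigr)\sigma R(\sigma)-\frac{\alpha}{n}(1-\sigma^2)R'(\sigma)-\sigma R(\sigma)
=\frac{\alpha}{n}\Bigl(\sigma R(\sigma)-(1-\sigma^2)R'(\sigma)\Bigr),
\]
where $R=J_n/F_n=\sum_j w_j/(x-\rho_j)$.

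Substituting the partial-fraction form, with $w_j>0$ by the induction hypothesis, yields
\[
\frac{J_{n+1}(\sigma)}{F_n(\sigma)}=\frac{\alpha}{n}\sum_j w_j\,\frac{\sigma(\sigma-\rho_j)+1-\sigma^2}{(\sigma-\rho_j)^2}
=\frac{\alpha}{n}\sum_j w_j\,\frac{1-\sigma\rho_j}{(\sigma-\rho_j)^2}.
\]
Every term here is positive because $|\sigma|,|\rho_j|<1$. Hence $J_{n+1}(\sigma)$ has the sign of $F_n(\sigma)$. The final step is to check that the sign of $F_n$ at the $k$-th root of $F_{n+1}$ matches the sign of $F_{n+1}'$ there. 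Both signs are determined by the interlacing pattern: $F_{n+1}'$ alternates starting from $(-1)^{n}$ at the smallest root, and $F_n$ at that root also has sign $(-1)^n$. This yields $w_{n+1,k}>0$.

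I expect this positivity step to be the main obstacle. What makes it work is the cancellation producing the factor $1-\sigma\rho_j$, which is positive exactly because all roots lie in $(-1,1)$. The hypothesis $\alpha\le 1/2$ is not visibly used anywhere in this argument, so I would double-check that nothing hidden requires it, for instance at the endpoint sign computations.
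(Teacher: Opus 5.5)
Your proof is correct and follows the paper's argument. Both use the same induction with interlacing roots, and both rest on the same key identity
\[
\frac{J_{n+1}(\sigma)}{F_n(\sigma)}=\frac{\alpha}{n}\sum_j w_{n,j}\frac{1-\sigma\rho_{n,j}}{(\sigma-\rho_{n,j})^2}
\]
at the roots $\sigma$ of $F_{n+1}$.

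The differences are cosmetic. The paper locates the roots through the monotonicity of $D=F_{n+1}/F_n$, using $D(\pm1)=\pm1$. It then obtains the sign of $F_{n+1}'(\sigma)$ from $F_{n+1}'(\sigma)=F_n(\sigma)D'(\sigma)$ with $D'>0$, rather than by your sign bookkeeping. The paper's argument does not use $\alpha\le1/2$ either.
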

\begin{proof}
At $n=1$, $F_1(x)=x$ has a unique simple root
$\rho_{1,1}=0\in(-1,1)$. Moreover, since $J_1(x)=1$,
\[
\frac{J_1(x)}{F_1(x)}=\frac1{x-\rho_{1,1}},
\qquad w_{1,1}=1.
\]
Assume that $F_n$ has $n$ simple roots
$-1<\rho_{n,1}<\cdots<\rho_{n,n}<1$ and that
\eqref{pos:eq:residues} holds with $w_{n,j}>0$ and
$\sum_{j=1}^n w_{n,j}=1$.
Write $m(x)=J_n(x)/F_n(x)$. For $x$ outside the roots, define
\begin{align*}
D(x)=\frac{F_{n+1}(x)}{F_n(x)}
&=x-\frac\alpha n(1-x^2)\frac{F_n'(x)}{F_n(x)}\\
&=x-\frac\alpha n(1-x^2)\sum_{j=1}^n\frac1{x-\rho_{n,j}}.
\end{align*}
Here we used \eqref{eq:polyF} and $\deg F_n=n$. 
Differentiating yields
\[
D'(x)
=1+\frac\alpha n\sum_j
\frac{1+x^2-2x\rho_{n,j}}{(x-\rho_{n,j})^2}>0,
\]
because the numerator is $(x-\rho_{n,j})^2+1-\rho_{n,j}^2>0$.
Moreover,
\[
\lim_{x\uparrow\rho_{n,j}}D(x)=+\infty,\qquad
\lim_{x\downarrow\rho_{n,j}}D(x)=-\infty,
\qquad D(-1)=-1,\quad D(1)=1.
\]
Continuity and strict monotonicity give exactly one zero in each interval
\[
(-1,\rho_{n,1}),\quad
(\rho_{n,j},\rho_{n,j+1})\ (1\le j\le n-1),\quad
(\rho_{n,n},1).
\]
These are $n+1$ simple roots of $F_{n+1}$, none shared with $F_n$.

Let $r=\rho_{n+1,k}$ be one of these roots. Then $F_n(r)\ne0$ and $D(r)=0$. From \eqref{eq:polyJ},
\[
\frac{J_{n+1}(x)}{F_n(x)}
=m(x)D(x)+\frac\alpha n\{xm(x)-(1-x^2)m'(x)\}.
\]
Since
\[
m(x)=\sum_{j=1}^n\frac{w_{n,j}}{x-\rho_{n,j}},\qquad
m'(x)=-\sum_{j=1}^n\frac{w_{n,j}}{(x-\rho_{n,j})^2},
\]
we obtain
\[
\frac{J_{n+1}(r)}{F_n(r)}
=\frac\alpha n\sum_{j=1}^n
 w_{n,j}\frac{1-r\rho_{n,j}}{(r-\rho_{n,j})^2}.
\]
Taylor expansion at $r$ gives
\[
D(x)=D'(r)(x-r)+O((x-r)^2),\qquad
\frac{J_{n+1}(x)}{F_n(x)}
=\frac{J_{n+1}(r)}{F_n(r)}+O(x-r).
\]
Since $F_{n+1}(x)=F_n(x)D(x)$, the above Taylor expansions give
\[
\frac{J_{n+1}(x)}{F_{n+1}(x)}
=
\frac{J_{n+1}(r)}{F_n(r)D'(r)}
\frac1{x-r}+O(1).
\]
Let $w_{n+1,k}$ denote the coefficient of $(x-r)^{-1}$
in this expansion. Then
\[
w_{n+1,k}
=
\frac{J_{n+1}(r)}{F_n(r)D'(r)}
=
\frac\alpha{nD'(r)}\sum_{j=1}^n
 w_{n,j}\frac{1-r\rho_{n,j}}{(r-\rho_{n,j})^2}>0.
\]
Positivity follows from $\alpha>0$, $D'(r)>0$, and $r,\rho_{n,j}\in(-1,1)$.

By Lemma~\ref{lem:basic-polynomials}, $\deg J_{n+1}=n<\deg F_{n+1}=n+1$. Since all denominator roots are simple, partial fractions give
\[
\frac{J_{n+1}(x)}{F_{n+1}(x)}
=\sum_{k=1}^{n+1}\frac{w_{n+1,k}}{x-\rho_{n+1,k}}.
\]
The leading coefficients of numerator and denominator both equal $(1+\alpha)^n$ by Lemma~\ref{lem:basic-polynomials}, so the ratio is asymptotic to $1/x$ as $x\to\infty$. Comparing with the partial fractions gives $\sum_kw_{n+1,k}=1$.

Finally, evaluating \eqref{pos:eq:residues} at $x=1$ and using
the identities $F_n(1)=1$ and $J_n(1)=h_n$
established in Lemma~\ref{lem:basic-polynomials}
proves the last assertion. The proof is complete.
\end{proof}

When $-1<\alpha<0$, the roots of $F_n$ are simple and
purely imaginary, and those of $F_n$ and $F_{n+1}$
interlace along the imaginary axis, as established by
Gu\'erin, Laulin, and Raschel~\cite[Proposition~2]{GuerinLaulinRaschel2025}.
The following lemma further establishes a partial fraction
decomposition of $J_n/F_n$ with positive weights.
\begin{lemma}\label{neg:lem:rotated}
Suppose $-1<\alpha<0$. All roots of $F_n$ are simple and lie on the imaginary axis, say $\mathrm{i} t_{n,1},\ldots,\mathrm{i} t_{n,n}$ with $t_{n,j}\in\mathbb R$. There are strictly positive weights $w_{n,j}$ with $\sum_{j=1}^nw_{n,j}=1$ such that
\begin{align}\label{neg:eq:residues}
 \frac{J_n(x)}{F_n(x)}
 =\sum_{j=1}^n\frac{w_{n,j}}{x-\mathrm{i} t_{n,j}}.
\end{align}
\end{lemma}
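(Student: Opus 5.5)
The plan is to reduce to the argument of Lemma~\ref{pos:lem:residues} by rotating the variable, $x=\mathrm{i}y$. By the parity relations \eqref{guuki}, only powers $x^{n-2k}$ occur in $F_n$ and only powers $x^{n-1-2k}$ occur in $J_n$. Hence
\[
f_n(y)\coloneqq \mathrm{i}^{-n}F_n(\mathrm{i}y),\qquad g_n(y)\coloneqq \mathrm{i}^{-(n-1)}J_n(\mathrm{i}y)
\]
are real polynomials. By Lemma~\ref{lem:basic-polynomials} both have leading coefficient $(1+\alpha)^{n-1}>0$, with $\deg f_n=n$ and $\deg g_n=n-1$. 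They inherit the parities $f_n(-y)=(-1)^nf_n(y)$ and $g_n(-y)=(-1)^{n-1}g_n(y)$.

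Substituting into \eqref{eq:polyF} and \eqref{eq:polyJ}, using $F_n'(\mathrm{i}y)=\mathrm{i}^{n-1}f_n'(y)$, gives
\[
f_{n+1}(y)=yf_n(y)+\frac\alpha n(1+y^2)f_n'(y),\qquad
g_{n+1}(y)=\Bigl(1+\frac\alpha n\Bigr)yg_n(y)+\frac\alpha n(1+y^2)g_n'(y).
\]
Moreover $J_n(\mathrm{i}y)/F_n(\mathrm{i}y)=-\mathrm{i}\,g_n(y)/f_n(y)$. Therefore \eqref{neg:eq:residues} is equivalent to a real decomposition
\[
\frac{g_n(y)}{f_n(y)}=\sum_j\frac{w_{n,j}}{y-t_{n,j}},
\]
with real simple roots $t_{n,j}$ and positive weights $w_{n,j}$. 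I would prove this by induction on $n$; the case $n=1$ is $g_1/f_1=1/y$.

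\emph{Interlacing.} Set $D=f_{n+1}/f_n=y+\frac\alpha n(1+y^2)\sum_j(y-t_{n,j})^{-1}$. A direct computation gives
\[
D'(y)=1+\alpha-\frac\alpha n\sum_j\frac{1+t_{n,j}^2}{(y-t_{n,j})^2}>0,
\]
since $-1<\alpha<0$. Because $\alpha<0$, $D\to+\infty$ as $y\uparrow t_{n,j}$ and $D\to-\infty$ as $y\downarrow t_{n,j}$. Also $D(y)\sim(1+\alpha)y$ as $y\to\pm\infty$. Strict monotonicity then yields exactly one root of $f_{n+1}$ in each of the $n+1$ gaps, so the $n+1$ roots are real, simple, and interlace with those of $f_n$.

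\emph{Residues.} As in Lemma~\ref{pos:lem:residues}, at a root $r$ of $f_{n+1}$ the residue is $w_{n+1,k}=g_{n+1}(r)/(f_n(r)D'(r))$. Writing $m=g_n/f_n$, one finds $g_{n+1}/f_n-mD=\frac\alpha n\{ym+(1+y^2)m'\}$, and hence
\[
\frac{g_{n+1}(r)}{f_n(r)}=\frac{|\alpha|}{n}\sum_j w_{n,j}\frac{1+rt_{n,j}}{(r-t_{n,j})^2}.
\]
This is the main obstacle: unlike the case $\alpha>0$, the factor $1+rt_{n,j}$ need not be positive. I would resolve it with the symmetry of the weights. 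By the parities, $g_n/f_n$ is odd, so the roots come in pairs $\pm t$ with equal weights, and a root $t=0$ occurs when $n$ is odd. For a symmetric pair the contributions combine to
\[
w\,\frac{(1+rt)(r+t)^2+(1-rt)(r-t)^2}{(r^2-t^2)^2}=w\,\frac{2r^2+2t^2+4r^2t^2}{(r^2-t^2)^2}>0.
\]
A root $t=0$ contributes $w/r^2>0$; note $r\neq0$ there, because the roots interlace strictly. Together with $D'(r)>0$ this gives $w_{n+1,k}>0$. The symmetry of the weights propagates to step $n+1$ from the parities of $f_{n+1}$ and $g_{n+1}$.

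\emph{Conclusion.} Since $\deg g_{n+1}<\deg f_{n+1}$ and the roots are simple, partial fractions give the decomposition. Equal leading coefficients give $g_{n+1}/f_{n+1}\sim1/y$ as $y\to\infty$, so $\sum_kw_{n+1,k}=1$. Rotating back via $x=\mathrm{i}y$ yields \eqref{neg:eq:residues}.
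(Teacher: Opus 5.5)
Your proposal is correct and follows essentially the same route as the paper: the rotation $f_n,g_n$ is the paper's $P_n,Q_n$, and you use the same monotonicity of $f_{n+1}/f_n$ for interlacing, the same residue formula, and the same pairing of the roots $\pm t$ (with equal weights, by oddness) to show that $\frac{1+rt}{(r-t)^2}+\frac{1-rt}{(r+t)^2}>0$. Your explicit remark that $r\neq0$ when a zero root is present (by strict interlacing) is a small point the paper leaves implicit.
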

\begin{proof}
Set $P_n(x)=\mathrm{i}^{-n}F_n(\mathrm{i} x)$ and
$Q_n(x)=\mathrm{i}^{-(n-1)}J_n(\mathrm{i} x)$.
The parity identities \eqref{guuki} restrict the powers in $F_n$ and $J_n$ to $n,n-2,\ldots$ and $n-1,n-3,\ldots$, respectively; hence $P_n$ and $Q_n$ have real coefficients.
The identities
\[
F_n(\mathrm{i} x)=\mathrm{i}^nP_n(x),\quad F_n'(\mathrm{i} x)=\mathrm{i}^{n-1}P_n'(x),\qquad
J_n(\mathrm{i} x)=\mathrm{i}^{n-1}Q_n(x),\quad J_n'(\mathrm{i} x)=\mathrm{i}^{n-2}Q_n'(x)
\]
transform \eqref{eq:polyF}--\eqref{eq:polyJ} into
\[
P_{n+1}(x)=xP_n(x)+\frac\alpha n(1+x^2)P_n'(x),
\]
\[
Q_{n+1}(x)=\left(1+\frac\alpha n\right)xQ_n(x)
 +\frac\alpha n(1+x^2)Q_n'(x).
\]
The leading coefficients of $P_n$ and $Q_n$ are both
$(1+\alpha)^{n-1}>0$ by Lemma~\ref{lem:basic-polynomials}.

We prove by induction on $n$ that $P_n$ has $n$ simple
real roots and that $Q_n/P_n$ admits a partial fraction
decomposition with strictly positive residues.
For $n=1$, we have $P_1(x)=x$ and $Q_1(x)=1$, so
$Q_1(x)/P_1(x)=1/x$, which establishes the base case.

For the induction step, assume that $P_n$ has simple real roots
$t_{n,1}<\cdots<t_{n,n}$ and
\[
\widetilde m(x)=\frac{Q_n(x)}{P_n(x)}
 =\sum_{j=1}^n\frac{w_{n,j}}{x-t_{n,j}},\qquad w_{n,j}>0.
\]
The parity identities \eqref{guuki} imply that $\widetilde m$ is odd. If its residue at $t$ is $w$, then
\begin{align}\label{symresidue}
\widetilde m(x)=\frac{w}{x-t}+O(1)\quad(x\to t),\qquad
\widetilde m(x)=-\widetilde m(-x)
 =\frac{w}{x+t}+O(1)\quad(x\to-t).
\end{align}
Thus opposite poles have equal weights.
For $x$ outside the roots, define
\[
\widetilde D(x)=\frac{P_{n+1}(x)}{P_n(x)}
=x+\frac\alpha n(1+x^2)\sum_{j=1}^n\frac1{x-t_{n,j}}.
\]
Then
\begin{align*}
\widetilde D'(x)=1+\alpha-\frac\alpha n
 \sum_{j=1}^n\frac{1+t_{n,j}^2}{(x-t_{n,j})^2}>0.
\end{align*}
Using the identity
\[
\frac{1+x^2}{x-t}=x+t+\frac{1+t^2}{x-t},
\]
we can rewrite $\widetilde D(x)$ as
\[
\widetilde D(x)
=(1+\alpha)x
+\frac{\alpha}{n}\sum_{j=1}^n t_{n,j}
+\frac{\alpha}{n}\sum_{j=1}^n
\frac{1+t_{n,j}^2}{x-t_{n,j}}.
\]
It follows that $\widetilde D(x)\sim(1+\alpha)x$
as $x\to\pm\infty$. Since $1+\alpha>0$, we have
$\lim_{x\to\pm\infty}\widetilde D(x)=\pm\infty$.
Moreover, for each $j$, as $x\to t_{n,j}$,
\[
\widetilde D(x)
=\frac{\alpha}{n}
\frac{1+t_{n,j}^2}{x-t_{n,j}}+O(1).
\] Hence
\[
\lim_{x\uparrow t_{n,j}}\widetilde D(x)=+\infty,\qquad
\lim_{x\downarrow t_{n,j}}\widetilde D(x)=-\infty.
\]
Strict monotonicity gives exactly one zero in each of
\[(-\infty,t_{n,1}),\quad (t_{n,j},t_{n,j+1})\ (1\le j\le n-1),\quad (t_{n,n},\infty).\]
Denote these $n+1$ simple real roots of $P_{n+1}$
by $t_{n+1,1}<\cdots<t_{n+1,n+1}$.
They interlace those of $P_n$.

Since $Q_n(x)=\widetilde m(x) P_n(x)$, differentiation gives
$Q_n'(x)=\widetilde m'(x)P_n(x)+\widetilde m(x)P_n'(x)$.
Substituting this into the recurrence for $Q_{n+1}$
and using the definition of $\widetilde D$ yields
\[
\frac{Q_{n+1}(x)}{P_n(x)}
=\widetilde m(x)\widetilde D(x)
 +\frac\alpha n\{x\widetilde m(x)+(1+x^2)\widetilde m'(x)\}.
\]
At a root $s$ of $P_{n+1}$, interlacing gives $P_n(s)\ne0$ and $\widetilde D(s)=0$. Since
\[
\widetilde m'(x)=-\sum_{j=1}^n\frac{w_{n,j}}{(x-t_{n,j})^2},
\]
we find
\[
\frac{Q_{n+1}(s)}{P_n(s)}
=-\frac\alpha n\sum_{j=1}^nw_{n,j}
 \frac{1+st_{n,j}}{(s-t_{n,j})^2}.
\]
Taylor expansion as $y\to s$ gives
\[
\widetilde D(y)=\widetilde D'(s)(y-s)+O((y-s)^2),\qquad
\frac{Q_{n+1}(y)}{P_n(y)}=
\frac{Q_{n+1}(s)}{P_n(s)}+O(y-s).
\]
Noting that $P_{n+1}(y)=P_n(y)\widetilde D(y)$,
the above Taylor expansions yield, for each root
$t_{n+1,k}$ of $P_{n+1}$, as $y\to t_{n+1,k}$,
\[
\frac{Q_{n+1}(y)}{P_{n+1}(y)}
=\frac{w_{n+1,k}}{y-t_{n+1,k}}+O(1),
\]
where
\begin{align*}
w_{n+1,k}
&=\frac{Q_{n+1}(t_{n+1,k})}
{P_n(t_{n+1,k})\widetilde D'(t_{n+1,k})}\\
&=\frac{-\alpha}{n\widetilde D'(t_{n+1,k})}
\sum_{j=1}^n w_{n,j}
\frac{1+t_{n+1,k}t_{n,j}}
{(t_{n+1,k}-t_{n,j})^2}.
\end{align*}
The individual summands in the expression for $w_{n+1,k}$
need not be positive.
To show that their sum is positive, let $t=t_{n,j}>0$
be a root of $P_n$.
By the parity of $P_n$, which follows from \eqref{guuki},
$-t$ is also a root of $P_n$.
By \eqref{symresidue}, the corresponding weights are equal,
and they are strictly positive by the induction hypothesis.
Denote their common value by $w$.
Thus, writing $s=t_{n+1,k}$, the two terms corresponding to
$t$ and $-t$ contribute
\[
w\left\{
\frac{1+st}{(s-t)^2}
+\frac{1-st}{(s+t)^2}
\right\}
\]
to the sum.
The expression in braces is strictly positive, since
\begin{align*}
\frac{1+st}{(s-t)^2}+\frac{1-st}{(s+t)^2}
&=\frac{(1+st)(s+t)^2+(1-st)(s-t)^2}
{(s^2-t^2)^2}\\
&=\frac{2(s^2+t^2+2s^2t^2)}
{(s^2-t^2)^2}>0.
\end{align*}
The term corresponding to a zero root, if present, is
clearly positive by the induction hypothesis.
Thus, the whole sum is positive.
Since $-\alpha>0$ and $\widetilde D'(t_{n+1,k})>0$,
all $w_{n+1,k}$ are positive.

Now $\deg Q_{n+1}=n<n+1=\deg P_{n+1}$,
and the denominator has only simple roots.
Partial fractions yield
\[
\frac{Q_{n+1}(y)}{P_{n+1}(y)}
=\sum_{k=1}^{n+1}\frac{w_{n+1,k}}{y-t_{n+1,k}}.
\]
Equality of the leading coefficients implies that
the left side is asymptotic to $1/y$ as $y\to\infty$;
comparison gives
\[
\sum_{k=1}^{n+1}w_{n+1,k}=1.
\]
Oddness of $Q_{n+1}/P_{n+1}$ again ensures equal
residues at opposite roots, completing the induction.

By the definitions of $P_n$ and $Q_n$,
we obtain \eqref{neg:eq:residues}. The proof is complete.
\end{proof}

The following remark reformulates \eqref{neg:eq:residues}
in a form useful for the proof of Lemma~\ref{neg:lem:log}.

\begin{remark}
Let $w_{n,0}$ denote the weight at the zero root,
or $0$ if no such root exists.
Recall that $P_n$ and $Q_n$ were introduced in the proof of
Lemma~\ref{neg:lem:rotated}.
By \eqref{symresidue} and their definitions,
the residues at opposite poles are equal.
Pairing opposite poles in \eqref{neg:eq:residues} gives
\begin{align}\label{neg:eq:paired}
 \frac{J_n(x)}{F_n(x)}
 =\frac{w_{n,0}}x+
 \sum_{\substack{1\le j\le n\\t_{n,j}>0}}
 \frac{2w_{n,j}x}{x^2+t_{n,j}^2}.
\end{align}
Evaluating at $x=1$ and using $F_n(1)=1$ and
$J_n(1)=h_n$, we obtain
\begin{align}\label{neg:eq:hn-paired}
 h_n=w_{n,0}+
 \sum_{\substack{1\le j\le n\\t_{n,j}>0}}
 \frac{2w_{n,j}}{1+t_{n,j}^2}.
\end{align}
\end{remark}
\subsection{Product representations and triangular recurrences}
Let $\delta_n=n\bmod2$.
For $0<\alpha\le1/2$, let
\[
\rho_{n,1}',\ldots,\rho_{n,[n/2]}'
\]
be the positive roots of $F_n$ and set
\[
b_{n,j}=\frac{1}{1-(\rho_{n,j}')^2}>1,
\qquad 1\le j\le[n/2].
\]
For $-1<\alpha<0$, let
\[
\mathrm{i}t_{n,1}',\ldots,\mathrm{i}t_{n,[n/2]}'
\]
be the roots of $F_n$ with positive imaginary part and set
\[
b_{n,j}=\frac{1}{1+(t_{n,j}')^2}\in(0,1),
\qquad 1\le j\le[n/2].
\]
Parity identities \eqref{guuki} and $F_n(1)=1$ give
\begin{align}
F_n(x)&=x^{\delta_n}
 \prod_{j=1}^{[n/2]}\left(1-b_{n,j}(1-x^2)\right),\label{eq:product1}\\
F_n(\cosh z)&=(\cosh z)^{\delta_n}
 \prod_{j=1}^{[n/2]}\left(1+b_{n,j}\sinh^2z\right).\label{eq:product2}
\end{align}
At $\alpha=0$, \eqref{eq:polyF} reduces to $F_{n+1}(x)=xF_n(x)$, so $F_n(x)=x^n$ and all roots are zero, counted with multiplicity.
Defining $b_{n,j}=1$ for $1\le j\le[n/2]$ therefore makes \eqref{eq:product1}--\eqref{eq:product2} valid at $\alpha=0$ as well.
For these parameter ranges, define
\begin{align}\label{eq:nonminus1theta}
\Theta_k(n)=\frac{\delta_n}{2}+\sum_{j=1}^{[n/2]}b_{n,j}^k
 \quad(k\ge1),\qquad
B_n=\max\left(1,\max_{1\le j\le[n/2]}b_{n,j}\right).
\end{align}
At $n=1$, set $\Theta_k(1)=1/2$ and $B_1=1$.
For $|u|<B_n^{-1}$,
\begin{align}\label{eq:rootlog}
\log F_n(\sqrt{1-u})
=-\sum_{k=1}^{\infty}\frac{\Theta_k(n)}k u^k.
\end{align}
Indeed, substitute $x=\sqrt{1-u}$ in \eqref{eq:product1} and expand the logarithms, choosing the analytic branches with $\sqrt1=1$ and $\log1=0$.

The following proposition establishes triangular recurrences
for $\Theta_k(n)$ and relates these quantities to the
cumulants of the ERW with $q=1/2$.
\begin{proposition}\label{prop:rootrec}
For $-1<\alpha\le1/2$, $n\ge1$, and $k\ge1$,
\begin{align}\label{eq:rootrec}
\Theta_k(n+1)=\Theta_k(n)+\frac12
-k[u^k]\log\left(1-\frac{2\alpha}{n}
 \sum_{j=1}^{\infty}\Theta_j(n)u^j\right),
\end{align}
where $[u^k]f(u)$ denotes the coefficient of $u^k$ in $f(u)$. In particular,
\begin{align}
\Theta_1(n+1)&=(1+2\alpha/n)\Theta_1(n)+\tfrac12,\label{eq:theta1}\\
\Theta_2(n+1)&=(1+4\alpha/n)\Theta_2(n)+\tfrac12
 +4\alpha^2\Theta_1(n)^2/n^2,\label{eq:theta2}\\
\Theta_3(n+1)&=(1+6\alpha/n)\Theta_3(n)+\tfrac12
 +12\alpha^2\Theta_1(n)\Theta_2(n)/n^2
 +8\alpha^3\Theta_1(n)^3/n^3.\label{eq:theta3}
\end{align}
Moreover,
\begin{align}\label{eq:cumroots}
\kappa_2^{(1/2)}=2\Theta_1,\quad
\kappa_4^{(1/2)}=8\Theta_1-12\Theta_2,\quad
\kappa_6^{(1/2)}=32\Theta_1-240\Theta_2+240\Theta_3.
\end{align}
\end{proposition}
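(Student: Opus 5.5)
The plan is to work with the generating function $L_n(u)=\log F_n(\sqrt{1-u})=-\sum_{k\ge1}\Theta_k(n)u^k/k$ from \eqref{eq:rootlog}. I will turn the polynomial recurrence \eqref{eq:polyF} into an exact multiplicative identity for $F_{n+1}/F_n$ in the variable $u$. Taking logarithms and comparing coefficients of $u^k$ then gives \eqref{eq:rootrec}.

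\textbf{The multiplicative identity.} Put $x=\sqrt{1-u}$, so $1-x^2=u$. Since $F_n(1)=1$, both $F_n(\sqrt{1-u})$ and $F_{n+1}(\sqrt{1-u})$ are analytic and nonvanishing in a neighbourhood of $u=0$ (say $|u|<\min(B_n^{-1},B_{n+1}^{-1})$). All identities below are therefore identities of analytic functions near $0$, and it is legitimate to compare Taylor coefficients.

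By the chain rule, $du/dx=-2x$, so
\[
\frac{F_n'(x)}{F_n(x)}=-2x\,L_n'(u)=2x\sum_{j\ge1}\Theta_j(n)u^{j-1}.
\]
Substituting into \eqref{eq:polyF} gives
\[
\frac{F_{n+1}(x)}{F_n(x)}=x-\frac{\alpha}{n}\,u\,\frac{F_n'(x)}{F_n(x)}
=\sqrt{1-u}\,\Bigl(1-\frac{2\alpha}{n}\sum_{j\ge1}\Theta_j(n)u^j\Bigr).
\]
Taking logarithms yields
\[
L_{n+1}(u)=L_n(u)+\tfrac12\log(1-u)+\log\Bigl(1-\frac{2\alpha}{n}\sum_{j\ge1}\Theta_j(n)u^j\Bigr).
\]
Now multiply the coefficient of $u^k$ by $-k$. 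The left side gives $\Theta_{k+}$-type term $\Theta_k(n+1)$, the first term on the right gives $\Theta_k(n)$, and $\tfrac12\log(1-u)$ contributes exactly $\tfrac12$. This is \eqref{eq:rootrec}.

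\textbf{Consistency checks.} At $n=1$ we have $F_1(x)=x$, so $L_1(u)=\tfrac12\log(1-u)$, which matches the convention $\Theta_k(1)=1/2$. At $\alpha=0$ everything is consistent with $F_n(x)=x^n$.

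\textbf{The explicit recurrences.} For \eqref{eq:theta1}--\eqref{eq:theta3}, write $y=\frac{2\alpha}{n}\sum_j\Theta_j(n)u^j$ and expand $\log(1-y)=-y-y^2/2-y^3/3-\cdots$. Only these three terms affect the coefficients of $u^1,u^2,u^3$.
\begin{itemize}
\item The coefficient of $u$ in $\log(1-y)$ is $-2\alpha\Theta_1/n$. Multiplying by $-1$ gives \eqref{eq:theta1}.
\item The coefficient of $u^2$ is $-2\alpha\Theta_2/n-2\alpha^2\Theta_1^2/n^2$. Multiplying by $-2$ gives \eqref{eq:theta2}.
\item The coefficient of $u^3$ is $-2\alpha\Theta_3/n-4\alpha^2\Theta_1\Theta_2/n^2-8\alpha^3\Theta_1^3/(3n^3)$. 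Multiplying by $-3$ gives \eqref{eq:theta3}.
\end{itemize}

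\textbf{The cumulant identities.} For \eqref{eq:cumroots}, take $q=1/2$, so $\beta=0$ and \eqref{eq:decomp} reduces to $M_n^{(1/2)}(z)=F_n(\cosh z)$. Choose $u=1-\cosh^2z=-\sinh^2z$; this is small for small $z$, and then $\sqrt{1-u}=\cosh z$ on the correct branch. Hence
\[
\log M_n^{(1/2)}(z)=\Theta_1\sinh^2z-\tfrac12\Theta_2\sinh^4z+\tfrac13\Theta_3\sinh^6z+O(z^8).
\]
We need the expansions
\[
\sinh^2z=z^2+\tfrac{z^4}{3}+\tfrac{2z^6}{45}+O(z^8),\qquad \sinh^4z=z^4+\tfrac{2z^6}{3}+O(z^8),\qquad \sinh^6z=z^6+O(z^8).
\]
Reading off the coefficients of $z^2,z^4,z^6$ and multiplying by $2!,4!,6!$ respectively gives:
\begin{itemize}
\item from $z^2$: $\kappa_2=2\Theta_1$;
\item from $z^4$: $24\bigl(\tfrac{\Theta_1}{3}-\tfrac{\Theta_2}{2}\bigr)=8\Theta_1-12\Theta_2$;
\item from $z^6$: $720\bigl(\tfrac{2\Theta_1}{45}-\tfrac{\Theta_2}{3}+\tfrac{\Theta_3}{3}\bigr)=32\Theta_1-240\Theta_2+240\Theta_3$.
\end{itemize}

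\textbf{Main obstacle.} The only delicate point is justifying the analytic manipulations: the branch of $\sqrt{1-u}$, and the non-vanishing of $F_n$ and $F_{n+1}$ near $x=1$. Both follow from $F_n(1)=1$ and continuity, so the coefficient comparisons are valid. The remaining steps are routine coefficient extraction.
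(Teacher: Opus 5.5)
Your proposal is correct and follows the paper's proof essentially step for step. The paper also rewrites \eqref{eq:polyF} in the variable $u=1-x^2$ through $\Psi_n(u)=F_n(\sqrt{1-u})$, uses $u\Psi_n'(u)/\Psi_n(u)=-\sum_j\Theta_j(n)u^j$, takes logarithms and compares coefficients of $u^k$. The only difference is cosmetic: for \eqref{eq:cumroots} the paper expands $F_n(\cos\theta)$ in powers of $\sin^{2}\theta$, whereas you expand $F_n(\cosh z)$ in powers of $\sinh^{2}z$, which is equivalent.
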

\begin{proof}
Put $\Psi_n(u)=F_n(\sqrt{1-u})$. The chain rule gives
$F_n'(\sqrt{1-u})=-2\sqrt{1-u}\,\Psi_n'(u)$, so \eqref{eq:polyF} becomes
\[
\Psi_{n+1}(u)=\sqrt{1-u}
 \left\{\Psi_n(u)+\frac{2\alpha}{n}u\Psi_n'(u)\right\}.
\]
Termwise differentiation of \eqref{eq:rootlog} yields
\[
u\frac{\Psi_n'(u)}{\Psi_n(u)}=-\sum_{j=1}^{\infty}\Theta_j(n)u^j.
\]
Dividing the recurrence by $\Psi_n(u)$ and taking logarithms near zero gives
\[
\log\Psi_{n+1}(u)-\log\Psi_n(u)
=\frac12\log(1-u)+
 \log\left(1-\frac{2\alpha}{n}\sum_{j=1}^{\infty}\Theta_j(n)u^j\right).
\]
The left side equals
$-\sum_{k\ge1}(\Theta_k(n+1)-\Theta_k(n))u^k/k$, whereas
$\frac12\log(1-u)=-\sum_{k\ge1}u^k/(2k)$.
Comparing coefficients and multiplying by $-k$ proves \eqref{eq:rootrec}.

To obtain the first three recurrences, use $-\log(1-z)=z+z^2/2+z^3/3+\cdots$.
The coefficient of $u$ immediately gives \eqref{eq:theta1}.
Writing $A(u)=\sum_{j\ge1}\Theta_j(n)u^j$, we have
\[
A(u)^2=\Theta_1(n)^2u^2+2\Theta_1(n)\Theta_2(n)u^3+O(u^4),\qquad
A(u)^3=\Theta_1(n)^3u^3+O(u^4).
\]
Consequently,
\begin{align*}
[u^2]\{-\log(1-2\alpha A(u)/n)\}
&=\frac{2\alpha}{n}\Theta_2(n)
 +\frac12\left(\frac{2\alpha}{n}\right)^2\Theta_1(n)^2,\\
[u^3]\{-\log(1-2\alpha A(u)/n)\}
&=\frac{2\alpha}{n}\Theta_3(n)
+\left(\frac{2\alpha}{n}\right)^2\Theta_1(n)\Theta_2(n)
+\frac13\left(\frac{2\alpha}{n}\right)^3\Theta_1(n)^3.
\end{align*}
Substituting these coefficients into \eqref{eq:rootrec}
gives \eqref{eq:theta2} and \eqref{eq:theta3}.

For \eqref{eq:cumroots}, note that
$\chi_n^{(1/2)}(\theta)=F_n(\cos\theta)$.
Near zero, $\cos\theta=\sqrt{1-\sin^2\theta}$, and \eqref{eq:rootlog} gives
\[
\log\chi_n^{(1/2)}(\theta)
=-\sum_{k=1}^{\infty}\frac{\Theta_k(n)}k\sin^{2k}\theta.
\]
As $\theta\to0$,
\[
\sin^2\theta=\theta^2-\frac{\theta^4}{3}+\frac{2\theta^6}{45}+O(\theta^8),\quad
\sin^4\theta=\theta^4-\frac23\theta^6+O(\theta^8),\quad
\sin^6\theta=\theta^6+O(\theta^8).
\]
Thus
\begin{align*}
\log\chi_n^{(1/2)}(\theta)
&=-\Theta_1(n)\theta^2
 +\left(\frac{\Theta_1(n)}3-\frac{\Theta_2(n)}2\right)\theta^4\\
&\quad
+\left(-\frac{2\Theta_1(n)}{45}
 +\frac{\Theta_2(n)}3-\frac{\Theta_3(n)}3\right)\theta^6
 +O(\theta^8).
\end{align*}
On the other hand, the cumulant expansion is
$\log\chi_n^{(1/2)}(\theta)
=\sum_{j\ge1}\kappa_j^{(1/2)}(n)(\mathrm{i}\theta)^j/j!$.
Symmetry at $q=1/2$ makes the odd cumulants vanish, so this is
\[
-\frac{\kappa_2^{(1/2)}(n)}{2!}\theta^2
 +\frac{\kappa_4^{(1/2)}(n)}{4!}\theta^4
 -\frac{\kappa_6^{(1/2)}(n)}{6!}\theta^6+O(\theta^8).
\]
Comparing the coefficients of $\theta^2$, $\theta^4$, and $\theta^6$ proves \eqref{eq:cumroots}. The proof is complete.
\end{proof}

The following lemma will be used to obtain a uniform Taylor expansion
of $\log M_n^{(q)}$ in Lemma~\ref{pos:lem:log}.
\begin{lemma}\label{pos:lem:rootbounds}
Suppose $0<\alpha<1/2$.
For $n\ge2$ and $k=1,2,3$, $\Theta_k(n)=O_\alpha(G_k(n))$,
where $G_k$ is defined in \eqref{eq:G}.
In particular,
\[
\max_{1\le j\le n}\frac1{1-\rho_{n,j}}
 =O_\alpha(G_2(n)^{1/2}).
\]
\end{lemma}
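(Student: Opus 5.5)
The plan is to solve the triangular recurrences \eqref{eq:theta1}--\eqref{eq:theta3} of Proposition~\ref{prop:rootrec} one at a time. We use the fact that for $0<\alpha<1/2$ all $\Theta_k(n)$ are positive, since $b_{n,j}>1$. The basic tool is a discrete Gr\"onwall/variation-of-constants formula. A recurrence $a_{n+1}=(1+c/n)a_n+f_n$ with $c>0$ has solution
\[
a_n=\prod_{m=1}^{n-1}\Bigl(1+\frac cm\Bigr)\Bigl\{a_1+\sum_{m=1}^{n-1}\frac{f_m}{\prod_{i=1}^{m}(1+c/i)}\Bigr\},
\]
and $\prod_{i=1}^{m}(1+c/i)\asymp_c m^{c}$. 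Hence if $f_m=O(m^{\gamma})$ with $\gamma\ge0$, then
\[
a_n=O\Bigl(n^{c}\Bigl(1+\sum_{m\le n} m^{\gamma-c}\Bigr)\Bigr),
\]
which is $O(n^{\gamma+1})$ if $c<\gamma+1$, $O(n^{c}\log n)$ if $c=\gamma+1$, and $O(n^{c})$ if $c>\gamma+1$.

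First, $k=1$ with $c=2\alpha<1$ and $f=1/2$ gives $\Theta_1(n)=O(n)=O(G_1(n))$. Alternatively, $\Theta_1=\sigma_n^2/2$ by \eqref{eq:cumroots} and \eqref{eq:meanvar}.

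Next, $k=2$ with $c=4\alpha$: the forcing is $\tfrac12+4\alpha^2\Theta_1^2/n^2=O(1)$. The Gr\"onwall formula with $\gamma=0$ gives exactly the three cases $n$, $n\log n$, $n^{4\alpha}$, according as $4\alpha<1$, $=1$, $>1$. This is $G_2(n)$.

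For $k=3$ with $c=6\alpha$, the forcing is $\tfrac12+12\alpha^2\Theta_1\Theta_2/n^2+8\alpha^3\Theta_1^3/n^3$. The middle term is $O(G_2(m)/m)$. Since $G_2(m)/m$ is $O(1)$, $O(\log m)$ or $O(m^{4\alpha-1})$, the forcing is not quite $O(1)$ in the last two cases, and this is the step requiring care. Applying the summation formula, the contribution of this term is at most
\[
n^{6\alpha}\sum_{m\le n}m^{-6\alpha}\,\frac{G_2(m)}{m}.
\]
When $4\alpha<1$ this is handled by the $\gamma=0$ case, giving $G_3$.

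When $4\alpha=1$ we have $6\alpha=3/2>1$, and the sum $\sum m^{-3/2}\log m$ converges, so the contribution is $O(n^{3/2})=O(G_3)$.

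When $4\alpha>1$, the summand is $m^{-2\alpha-1}$, which is summable, so the contribution is $O(n^{6\alpha})=G_3(n)$.

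The constant forcing $1/2$ separately yields $G_3(n)$, including the case $6\alpha=1$, which gives $n\log n$. All constants depend only on $\alpha$, and the initial values $\Theta_k(1)=1/2$ are harmless.

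Finally, for the root bound we use $\Theta_2(n)\ge b_{n,j}^2$ for each $j$, since every term is positive. This gives
\[
\max_j b_{n,j}=O\bigl(G_2(n)^{1/2}\bigr).
\]
For a root $\rho=\rho'_{n,j}>0$, we have $\tfrac1{1-\rho}\le\tfrac{2}{1-\rho^2}=2b_{n,j}$. For negative roots, $1/(1-\rho)\le1$, and the zero root contributes $1$. Since $G_2(n)\ge n\ge1$, the claimed maximum bound follows.

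The only delicate point is the $k=3$ case with the slowly growing forcing at $4\alpha\ge1$. That is settled by the summability check above, because $6\alpha$ exceeds the growth exponent of the forcing by more than $1$.
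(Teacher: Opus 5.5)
Your proposal is correct and follows essentially the same route as the paper: variation of constants for \eqref{eq:theta1}--\eqref{eq:theta3}, with the same case split at $4\alpha=1$ for $\Theta_3$ (summability of $\sum m^{-3/2}\log m$ and of $m^{-2\alpha-1}$), and the bound $b_{n,j}\le\Theta_2(n)^{1/2}$ together with $1/(1-\rho)\le 2b_{n,j}$ for the roots. The only difference is cosmetic: the paper obtains $\Theta_1$ and the case $4\alpha<1$ of $\Theta_2$ from the Stolz--Ces\`aro lemma (Lemma~\ref{lem:difference}) instead of your unified summation bound, and you should state explicitly that the last forcing term $8\alpha^3\Theta_1(n)^3/n^3$ is $O_\alpha(1)$.
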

\begin{proof}
Equation~\eqref{eq:theta1} and Lemma~\ref{lem:difference} give
\[
\frac{\Theta_1(n)}n\longrightarrow\frac1{2(1-2\alpha)}.
\]
Thus, we have $\Theta_1(n)=O_\alpha(G_1(n))$.

The forcing term in \eqref{eq:theta2} therefore satisfies
\[
f_n\coloneqq \frac12+\frac{4\alpha^2}{n^2}\Theta_1(n)^2
 \longrightarrow\frac12+\frac{\alpha^2}{(1-2\alpha)^2}.
\]
If $\alpha<1/4$, then $4\alpha<1$, and Lemma~\ref{lem:difference}, applied to
$\Theta_2(n+1)=(1+4\alpha/n)\Theta_2(n)+f_n$, gives
\[
\frac{\Theta_2(n)}n\longrightarrow
\frac{\displaystyle\frac12+\frac{\alpha^2}{(1-2\alpha)^2}}{1-4\alpha}.
\]
For $\alpha\ge1/4$, define
\[
g_n=\prod_{j=1}^{n-1}\left(1+\frac{4\alpha}{j}\right)
 \asymp n^{4\alpha},\qquad
 g_{n+1}=\left(1+\frac{4\alpha}{n}\right)g_n.
\]
Dividing \eqref{eq:theta2} by $g_{n+1}$ gives
\[
\frac{\Theta_2(n+1)}{g_{n+1}}-\frac{\Theta_2(n)}{g_n}
 =\frac{f_n}{g_{n+1}}=O_\alpha(g_{n+1}^{-1}).
\]
Since $\Theta_2(1)=1/2$ and $g_1=1$, summation yields
\[
\frac{\Theta_2(n)}{g_n}
 =\frac12+O_\alpha\left(\sum_{j=1}^{n-1}\frac1{g_{j+1}}\right).
\]
At $\alpha=1/4$, $g_n=n$, and the sum is $O(\log n)$; hence $\Theta_2(n)=O(n\log n)$.
For $\alpha>1/4$, $g_{j+1}^{-1}=O_\alpha(j^{-4\alpha})$ is summable, giving $\Theta_2(n)=O_\alpha(n^{4\alpha})$.
Thus, we have
\[
\Theta_2(n)=
\begin{cases}
O_\alpha(n),&4\alpha<1,\\
O_\alpha(n\log n),&4\alpha=1,\\
O_\alpha(n^{4\alpha}),&4\alpha>1,
\end{cases}
\qquad \Theta_2(n)=O_\alpha(G_2(n)).
\]

Since $\Theta_1(n)=O_\alpha(n)$ and
$\Theta_2(n)=O_\alpha(G_2(n))$, the forcing term
in \eqref{eq:theta3} satisfies
\[
\frac12+\frac{12\alpha^2}{n^2}\Theta_1(n)\Theta_2(n)
 +\frac{8\alpha^3}{n^3}\Theta_1(n)^3
 =O_\alpha\left(1+\frac{G_2(n)}n\right).
\]
The corresponding homogeneous product is
$\prod_{j=1}^{n-1}(1+6\alpha/j)\asymp n^{6\alpha}$.
For $\alpha<1/4$, the forcing term is $O_\alpha(1)$, and division by this product followed by summation gives
\[
\Theta_3(n)=O_\alpha\left(n^{6\alpha}
 \left\{1+\sum_{j=1}^{n-1}j^{-6\alpha}\right\}\right)
 =\begin{cases}
O_\alpha(n),&6\alpha<1,\\
O_\alpha(n\log n),&6\alpha=1,\\
O_\alpha(n^{6\alpha}),&6\alpha>1.
\end{cases}
\]
At $\alpha=1/4$, the forcing term is $O(\log n)$, and
$\sum_jj^{-3/2}\log j<\infty$ gives
$\Theta_3(n)=O(n^{3/2})=O(G_3(n))$.
If $\alpha>1/4$, the forcing term is $O_\alpha(1+n^{4\alpha-1})$.
After division by the homogeneous product, both $n^{-6\alpha}$ and $n^{-2\alpha-1}$ are summable. Thus, we have
$\Theta_3(n)=O_\alpha(n^{6\alpha})=O_\alpha(G_3(n))$.
This proves the first assertion for $k=1,2,3$.

Finally, $\Theta_2(n)=\delta_n/2+\sum_\ell b_{n,\ell}^2$ implies
$b_{n,\ell}\le\Theta_2(n)^{1/2}=O_\alpha(G_2(n)^{1/2})$.
If $\rho_{n,j}=\rho_{n,\ell}'>0$, then
\[
\frac1{1-\rho_{n,j}}
 =\frac{1+\rho_{n,\ell}'}{1-(\rho_{n,\ell}')^2}
 \le2b_{n,\ell}=O_\alpha(G_2(n)^{1/2}).
\]
If $\rho_{n,j}\le0$, then $(1-\rho_{n,j})^{-1}\le1$.
Taking the maximum proves the root bound. The proof is complete.
\end{proof}

The preceding lemma is used only for $0<\alpha<1/2$.
When $-1<\alpha<0$, $b_{n,j}<1$, so $\Theta_k(n)\le n/2$ for every $k$.
The following lemma provides the corresponding bounds
at $\alpha=1/2$, which will be used to control
the analytic expansion of $\log M_n^{(q)}$.
\begin{lemma}\label{crit:lem:rootbounds}
If $\alpha=1/2$, then
$\Theta_1(n)=n\mathcal{H}_n/2$, $\Theta_2(n)=O(n^2)$,
$\Theta_3(n)=O(n^3)$, and $B_n=O(n)$.
\end{lemma}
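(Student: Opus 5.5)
Throughout, $\alpha=1/2$ and the triangular recurrences of Proposition~\ref{prop:rootrec} are the only tool needed. The plan is to get the exact formula for $\Theta_1$ first, then feed crude bounds into the recurrences for $\Theta_2$ and $\Theta_3$.

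\emph{Step 1: $\Theta_1$.} By \eqref{eq:cumroots}, $\Theta_1(n)=\kappa_2^{(1/2)}(n)/2=\sigma_n^2/2$. From \eqref{eq:meanvar} at $\alpha=1/2$, $\sigma_n^2=n\mathcal{H}_n$, so $\Theta_1(n)=n\mathcal{H}_n/2$. As a check via \eqref{eq:theta1}: with $\alpha=1/2$ the recurrence reads $\Theta_1(n+1)=(1+1/n)\Theta_1(n)+1/2$. Dividing by $n+1$ gives $\Theta_1(n+1)/(n+1)=\Theta_1(n)/n+1/(2(n+1))$. Since $\Theta_1(1)=1/2$, this telescopes to $\mathcal{H}_n/2$.

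\emph{Step 2: $\Theta_2$.} Equation \eqref{eq:theta2} becomes $\Theta_2(n+1)=(1+2/n)\Theta_2(n)+f_n$, with $f_n=\tfrac12+\Theta_1(n)^2/n^2=\tfrac12+\mathcal{H}_n^2/4=O((\log n)^2)$. The homogeneous product is $g_n=\prod_{j=1}^{n-1}(1+2/j)=n(n+1)/2\asymp n^2$. Dividing by $g_{n+1}$ and summing gives
\[
\frac{\Theta_2(n)}{g_n}=\frac12+\sum_{j=1}^{n-1}\frac{f_j}{g_{j+1}}.
\]
The sum converges, since $\sum_j(\log j)^2/j^2<\infty$. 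Hence $\Theta_2(n)=O(n^2)$.

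\emph{Step 3: $\Theta_3$.} Equation \eqref{eq:theta3} has homogeneous factor $1+3/n$, whose product is $\asymp n^3$. Its forcing term is
\[
\tfrac12+3\Theta_1(n)\Theta_2(n)/n^2+\Theta_1(n)^3/n^3=O\bigl(1+n\log n+(\log n)^3\bigr)=O(n\log n).
\]
After division by the homogeneous product the summands are $O(j^{-2}\log j)$, which is summable. Hence $\Theta_3(n)=O(n^3)$.

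\emph{Step 4: $B_n$.} Since every $b_{n,j}>0$, we have $b_{n,j}^2\le\Theta_2(n)$. So $b_{n,j}\le\Theta_2(n)^{1/2}=O(n)$, and therefore $B_n=\max(1,\max_j b_{n,j})=O(n)$.

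The only step needing any care is Step 2: one must check that the logarithmic growth of $\Theta_1/n$ does not ruin summability. It does not, because the homogeneous growth $n^2$ beats the forcing $(\log n)^2$ by more than a factor of $n$.
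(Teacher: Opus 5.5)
Your proof is correct and follows essentially the same route as the paper: you solve the $\Theta_1$ recurrence exactly, then divide the $\Theta_2$ and $\Theta_3$ recurrences by their homogeneous products (the paper divides by $(n+1)(n+2)$ and $(n+1)(n+2)(n+3)$, which is the same normalization) and sum the increments of order $\mathcal{H}_n^2/n^2$ and $\mathcal{H}_n/n^2$. You finish, as the paper does, with $b_{n,j}\le\Theta_2(n)^{1/2}$; your additional cross-check of $\Theta_1$ via $\kappa_2^{(1/2)}=\sigma_n^2=n\mathcal{H}_n$ is also valid.
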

\begin{proof}
By \eqref{eq:theta1}--\eqref{eq:theta3}, the recurrences specialize to
\begin{align*}
\Theta_1(n+1)&=\left(1+\frac1n\right)\Theta_1(n)+\frac12,\\
\Theta_2(n+1)&=\left(1+\frac2n\right)\Theta_2(n)
 +\frac12+\frac{\Theta_1(n)^2}{n^2},\\
\Theta_3(n+1)&=\left(1+\frac3n\right)\Theta_3(n)
 +\frac12+\frac{3\Theta_1(n)\Theta_2(n)}{n^2}
 +\frac{\Theta_1(n)^3}{n^3}.
\end{align*}
Solving the first recurrence with the initial condition
$\Theta_1(1)=1/2$ gives
\[
\Theta_1(n)=\frac{n\mathcal{H}_n}{2}.
\]

Substituting $\Theta_1(n)=n\mathcal{H}_n/2$ into the second
recurrence and dividing by $(n+1)(n+2)$ gives
\[
\frac{\Theta_2(n+1)}{(n+1)(n+2)}
-\frac{\Theta_2(n)}{n(n+1)}
=O\left(\frac{\mathcal{H}_n^2}{n^2}\right).
\]
Since $\sum_{n\ge1}\mathcal{H}_n^2/n^2<\infty$,
we obtain $\Theta_2(n)=O(n^2)$.

Similarly, using $\Theta_1(n)=n\mathcal{H}_n/2$ and
$\Theta_2(n)=O(n^2)$ in the third recurrence
and dividing by $(n+1)(n+2)(n+3)$ gives
\[
\frac{\Theta_3(n+1)}{(n+1)(n+2)(n+3)}
-\frac{\Theta_3(n)}{n(n+1)(n+2)}
=O\left(
\frac{\mathcal{H}_n}{n^2}
+\frac{\mathcal{H}_n^3}{n^3}
\right).
\]
Since the right-hand side is summable,
we obtain $\Theta_3(n)=O(n^3)$.
Finally, each $b_{n,\ell}\le\Theta_2(n)^{1/2}$, and hence
\[
B_n\le\max(1,\Theta_2(n)^{1/2})=O(n). \qedhere
\]
The proof is complete.
\end{proof}

\subsection{The endpoint cases $\alpha=0$ and $\alpha=-1$}\label{sec:endpoints}
At $\alpha=0$, \eqref{eq:transition} shows that the increments from the second step onward are independent fair signs. Therefore
\begin{align}\label{eq:independent}
 M_n^{(q)}(z)=(\cosh z)^n(1+\beta\tanh z),\qquad
 \mu_n=\beta,\quad v_n^2=n-\beta^2.
\end{align}
At $\alpha=-1$, $X_2=-X_1$, so $S_2=0$.
Since \eqref{eq:transition} depends only on the current position and time, the distribution at every $n\ge2$ is independent of $q$.

The following lemma establishes a product representation
for $F_n$ at $\alpha=-1$, which will be used in
Lemma~\ref{lem:antifourier} and in the derivation
of \eqref{eq:nonposlogsummary}.
\begin{lemma}\label{lem:minus-one-product}
Suppose $\alpha=-1$ and $n\ge2$.
Then $F_n$ has degree $n-2$, and all its roots
are simple and lie on the imaginary axis.
Moreover,
\[
F_n(x)=x^{\delta_n}
 \prod_{j=1}^{[(n-2)/2]}
 \{1-b_{n,j}(1-x^2)\},
\qquad 0<b_{n,j}<1.
\]
\end{lemma}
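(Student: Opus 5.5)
At $\alpha=-1$ the recurrence \eqref{eq:polyF} reads $F_{n+1}(x)=xF_n(x)+\frac1n(1-x^2)F_n'(x)$. I would first compute a few terms directly: $F_1(x)=x$ and $F_2(x)=x^2+(1-x^2)=1$. The degree statement of Lemma~\ref{lem:basic-polynomials} fails here because the leading coefficient $(1+\alpha)^{n-1}$ vanishes. So I would redo the degree count from $n=2$. If $F_n$ has degree $m$ with leading coefficient $c$, then the $x^{m+1}$ coefficient of $F_{n+1}$ is $c(1-m/n)$. Starting from $F_2=1$ ($m=0$) and inducting with $m=n-2$, the factor is $1-(n-2)/n=2/n\neq0$. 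Hence $\deg F_{n+1}=n-1$ with leading coefficient $c_{n+1}=2c_n/n>0$. The parity identities \eqref{guuki} still hold, since their proof did not use $\alpha>-1$, and $F_n(1)=1$ also still holds.

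For the roots, I would mimic the proof of Lemma~\ref{neg:lem:rotated}, starting the induction at $n=2$ or $n=3$ with $F_3(x)=x$. Set $P_n(x)=\mathrm{i}^{-(n-2)}F_n(\mathrm{i}x)$, which is real with positive leading coefficient. It satisfies $P_{n+1}(x)=xP_n(x)-\frac1n(1+x^2)P_n'(x)$. Assume $P_n$ has $n-2$ simple real roots $t_{n,j}$. Set
\[
\widetilde D(x)=\frac{P_{n+1}(x)}{P_n(x)}=x-\frac1n(1+x^2)\sum_j\frac1{x-t_{n,j}}.
\]
Using $\frac{1+x^2}{x-t}=x+t+\frac{1+t^2}{x-t}$ and $\sum_j t_{n,j}=0$ (by parity), this becomes
\[
\widetilde D(x)=\frac{2}{n}x-\frac1n\sum_j\frac{1+t_{n,j}^2}{x-t_{n,j}},
\]
and therefore
\[
\widetilde D'(x)=\frac2n+\frac1n\sum_j\frac{1+t_{n,j}^2}{(x-t_{n,j})^2}>0.
\]
Near each $t_{n,j}$, $\widetilde D$ tends to $+\infty$ from the left and $-\infty$ from the right, and $\widetilde D(x)\to\pm\infty$ as $x\to\pm\infty$. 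This gives exactly one root in each of the $n-1$ gaps, so $P_{n+1}$ has $n-1$ simple interlacing real roots. These match its degree.

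Translating back, the roots of $F_n$ are simple and of the form $\mathrm{i}t$, symmetric under $t\mapsto -t$, with a zero root exactly when $n$ is odd. Pairing $\pm\mathrm{i}t_{n,j}$ gives factors $x^2+t_{n,j}^2$. The normalization $F_n(1)=1$ then yields
\[
F_n(x)=x^{\delta_n}\prod_j\frac{x^2+t_j^2}{1+t_j^2}=x^{\delta_n}\prod_j\bigl(1-b_{n,j}(1-x^2)\bigr),\qquad b_{n,j}=\frac1{1+t_j^2}\in(0,1).
\]
There are $[(n-2)/2]$ such pairs, since $(n-2-\delta_n)/2=[(n-2)/2]$. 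The only potential obstacle is the degenerate start and the degree drop. The rest is a direct transcription of the earlier interlacing argument, with $\widetilde D'$ now positive without any condition on $\alpha$.
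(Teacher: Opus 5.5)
Your proposal is correct and follows essentially the same route as the paper. Both arguments use the degree count $c_{n+1}=\tfrac{2}{n}c_n$ from $F_2=1$, the rotation to $P_n$ with $P_{n+1}/P_n$ strictly increasing (derivative $\tfrac2n+\tfrac1n\sum_j(1+t_j^2)/(y-t_j)^2>0$) giving interlacing, and the product formula from parity and $F_n(1)=1$. Your normalization $\mathrm{i}^{-(n-2)}$ differs from the paper's $\mathrm{i}^{-n}$ only by a sign, and your checks that the zero root occurs exactly for odd $n$ and that there are $[(n-2)/2]$ root pairs spell out details the paper leaves implicit.
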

\begin{proof}
Set $P_n(y)=\mathrm{i}^{-n}F_n(\mathrm{i} y)$.
Since $F_1(x)=x$, \eqref{eq:polyF} with $\alpha=-1$ gives
\[
F_2(x)=x^2+(1-x^2)=1,
\qquad
P_2(y)=\mathrm{i}^{-2}F_2(\mathrm{i}y)=-1.
\]
Moreover, \eqref{eq:polyF} yields
\[
P_{n+1}(y)=yP_n(y)-\frac1n(1+y^2)P_n'(y).
\]
If $P_n$ has degree $n-2$ with leading coefficient $c_n\ne0$,
then the recurrence shows that $P_{n+1}$ has leading
coefficient
\[
c_{n+1}=\left(1-\frac{n-2}{n}\right)c_n
=\frac2n c_n\ne0.
\]
Since $P_2=-1$, induction gives $\deg P_n=\deg F_n=n-2$.
We follow the argument for real-rootedness and interlacing
in the proof of Lemma~\ref{neg:lem:rotated}.
Suppose that all the roots of $P_n$ are real and simple,
and denote them by $t_1,\ldots,t_{n-2}$.
The recurrence gives
\[
\left(\frac{P_{n+1}}{P_n}\right)'(y)
=\frac2n+\frac1n\sum_{j=1}^{n-2}
\frac{1+t_j^2}{(y-t_j)^2}>0
\]
for $y\in\mathbb R\setminus\{t_1,\ldots,t_{n-2}\}$.
Moreover, $P_{n+1}/P_n$ has a simple pole at each $t_j$
with residue $-(1+t_j^2)/n<0$, and
$P_{n+1}(y)/P_n(y)\sim 2y/n$ as $y\to\pm\infty$.
The same monotonicity argument therefore shows that
$P_{n+1}$ has simple real roots interlacing those of $P_n$.
Since $P_3=-y$, the assertion follows by induction.
By the definition of $P_n$, all roots of $F_n$ lie on
the imaginary axis.
Denote the positive roots of $P_n$ by
$t_{n,1}',\ldots,t_{n,[(n-2)/2]}'$ and set
$b_{n,j}=(1+(t_{n,j}')^2)^{-1}\in(0,1)$.
The parity of $F_n$ and $F_n(1)=1$ then give
\[
F_n(x)=x^{\delta_n}
 \prod_{j=1}^{[(n-2)/2]}
 \{1-b_{n,j}(1-x^2)\},\qquad 0<b_{n,j}<1\quad(n\ge2).
\]
The proof is complete.
\end{proof}
For $\alpha=-1$ and $n\ge2$, let $b_{n,j}$ be as
defined in Lemma~\ref{lem:minus-one-product}, and set
\begin{align}\label{eq:minus1theta}
\Theta_k(n)=\frac{\delta_n}{2}
+\sum_{j=1}^{[(n-2)/2]}b_{n,j}^k,
\qquad k\ge1.
\end{align}
The same argument as for $-1<\alpha\le1/2$ shows that
\eqref{eq:cumroots} also holds at $\alpha=-1$ for $n\ge2$.

\section{Fourier bounds from products and random trees}\label{sec:fourier}
Since $S_n\equiv n\pmod2$, we have
$\chi_n^{(q)}(\theta+\pi)=(-1)^n\chi_n^{(q)}(\theta)$.
For $s\equiv n\pmod2$, the integrand
$\chi_n^{(q)}(\theta)e^{-\mathrm{i}s\theta}$
is $\pi$-periodic. 
The Fourier inversion formula gives
\begin{align}
 \mathbb{P}_q(S_n=s)
 &=\frac1{2\pi}\left(
 \int_{-\pi}^{0}\chi_n^{(q)}(\theta)e^{-\mathrm{i}s\theta}\,d\theta
 +\int_{0}^{\pi}\chi_n^{(q)}(\theta)e^{-\mathrm{i}s\theta}\,d\theta
 \right)
 \notag\\
 &=\frac1{2\pi}\left(
 2\int_{-\pi/2}^{\pi/2}\chi_n^{(q)}(\theta)e^{-\mathrm{i}s\theta}\,d\theta
 \right)
 \notag\\
 &=\frac1\pi\int_{-\pi/2}^{\pi/2}
 \chi_n^{(q)}(\theta)e^{-\mathrm{i}s\theta}\,d\theta.
 \label{eq:inversion}
\end{align}

\subsection{General Fourier bounds}
The following lemma establishes an exponential bound for the characteristic function, uniformly in $q$.
\begin{lemma}\label{lem:fourier}
For $-1<\alpha<1$, $n\ge2$, every real $\theta$, and every $q\in[0,1]$,
\begin{align}\label{eq:global}
 |\chi_n^{(q)}(\theta)|\le3e^{-\lambda_\alpha n\sin^2\theta},\qquad
 \lambda_\alpha=\min\{(1-\alpha^2)/8,1/32\}>0.
\end{align}
\end{lemma}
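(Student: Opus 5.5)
We will prove \eqref{eq:global} by a coupling that exposes many independent fair signs, with the genealogy of the walk encoded by a random recursive tree. Write $p=(1+\alpha)/2$ and realize each $\eta_n$ as follows. Let $\epsilon_n\in\{0,1\}$ with $\mathbb{P}(\epsilon_n=1)=|\alpha|$, let $\xi_n$ be a fair $\pm1$ sign, and take all these variables independent of each other and of $X_1,U_2,U_3,\ldots$. Set $\eta_n=\operatorname{sign}(\alpha)$ if $\epsilon_n=1$ and $\eta_n=\xi_n$ otherwise; this has the correct law. If $\epsilon_n=0$, then $X_n=\xi_nX_{U_n}$ is a fair sign independent of everything in the past, so we call $n$ a \emph{fresh} vertex. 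If $\epsilon_n=1$, then $X_n=\pm X_{U_n}$ with a deterministic sign, so $n$ \emph{copies} its parent. The edges $n\to U_n$ form a random recursive tree on $\{1,\dots,n\}$. Cutting every edge that enters a fresh vertex splits the tree into clusters, each rooted at vertex $1$ or at a fresh vertex. Within a cluster all increments are $\pm$ the root increment, with signs determined by $(\epsilon,U)$.

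Let $\mathcal G=\sigma(U_k,\epsilon_k:k\le n)$. Conditionally on $\mathcal G$, the root increments of the clusters are independent. The cluster at vertex $1$ has bias $\beta$, and every other cluster has a fair sign. Hence
\[
\chi_n^{(q)}(\theta)=\mathbb{E}\Bigl[(\cos(m_1\theta)+\mathrm{i}\beta\sin(m_1\theta))\prod_{c\ne1}\cos(m_c\theta)\Bigr],
\]
where $m_c$ is the signed size of cluster $c$. The first factor has modulus at most $1$, uniformly in $q$. Each cluster of size one contributes $|\cos\theta|$. Let $N$ be the number of fresh vertices $j\ge2$ that no later vertex copies. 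Then $|\chi_n^{(q)}(\theta)|\le\mathbb{E}|\cos\theta|^N\le\mathbb{E}e^{-N\sin^2\theta/2}$.

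It then remains to show that $N$ is linear in $n$ except with exponentially small probability. First compute $\mathbb{E}N$. Vertex $j$ is fresh with probability $1-|\alpha|$. It is copied by $k>j$ with probability $|\alpha|/(k-1)$, independently over $k$. So the probability that $j$ is never copied is $\prod_{k=j+1}^n(1-|\alpha|/(k-1))\gtrsim (j/n)^{|\alpha|}$, which gives $\mathbb{E}N\ge c_\alpha n$ with $c_\alpha\approx(1-|\alpha|)/(1+|\alpha|)$. Next, $N$ is a function of the independent variables $(U_k,\epsilon_k)_{k\le n}$, and changing one pair $(U_k,\epsilon_k)$ changes $N$ by at most $2$. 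McDiarmid's inequality therefore gives $\mathbb{P}(N<\mathbb{E}N/2)\le\exp(-(\mathbb{E}N)^2/(8n))\le e^{-c'_\alpha n}$. Consequently
\[
\mathbb{E}e^{-N\sin^2\theta/2}\le e^{-\mathbb{E}N\sin^2\theta/4}+e^{-c'_\alpha n}\le 2e^{-\lambda n\sin^2\theta},
\]
since $\sin^2\theta\le1$. A slack factor (the constant $3$) absorbs small-$n$ and rounding effects.

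The main obstacle is obtaining the explicit constant $\lambda_\alpha=\min\{(1-\alpha^2)/8,1/32\}$ rather than just some $\lambda_\alpha>0$. I would first try to track constants. Near $|\alpha|\to1$ the dominant factor is $(1-|\alpha|)$, and the $(1+|\alpha|)$ comes from the survival probability, producing $(1-\alpha^2)$. The cap $1/32$ handles small $|\alpha|$, where the factors $1/2$ (from $|\cos\theta|\le e^{-\sin^2\theta/2}$), $1/2$ (from the median cut) and McDiarmid's constant combine. If McDiarmid's inequality is too lossy, I would replace it with a direct exponential-moment computation. That computation reveals the vertices sequentially, uses the Doob martingale of $N$, and applies Azuma with increments bounded by $2$.
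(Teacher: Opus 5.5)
\textbf{Gap: the argument does not give the stated constant $\lambda_\alpha$.} Your coupling is sound as far as it goes. The fresh/copy splitting of $\eta_j$ has the right law, and the cluster root increments are conditionally independent given $(U_k,\epsilon_k)_k$. The bound $|\chi_n^{(q)}(\theta)|\le\mathbb{E}|\cos\theta|^N$ is correct, and $N$ does have bounded differences $2$. So you obtain $|\chi_n^{(q)}(\theta)|\le 2e^{-\lambda'_\alpha n\sin^2\theta}$ for \emph{some} $\lambda'_\alpha>0$. That is all the later Fourier-inversion arguments in the paper actually use.

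But the statement fixes $\lambda_\alpha=\min\{(1-\alpha^2)/8,1/32\}$, and your argument does not reach it when $|\alpha|$ is near $1$.
\begin{itemize}
\item Since $\mathbb{P}(j\text{ fresh, never copied})=(1-|\alpha|)\prod_{k=j+1}^n(1-|\alpha|/(k-1))$, you get $\mathbb{E}N\sim\frac{1-|\alpha|}{1+|\alpha|}n$. The survival probability puts $1+|\alpha|$ in the denominator, not the numerator as your heuristic suggests.
\item On $\{N\ge\mathbb{E}N/2\}$ this gives the rate $\frac{1-|\alpha|}{4(1+|\alpha|)}$. That is already below $(1-\alpha^2)/8$ once $(1+|\alpha|)^2>2$.
\item Worse, McDiarmid gives only $\mathbb{P}(N<\mathbb{E}N/2)\le\exp\{-(\mathbb{E}N)^2/(8(n-1))\}\approx\exp\{-\frac{(1-|\alpha|)^2}{8(1+|\alpha|)^2}n\}$. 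This is quadratically small in $1-|\alpha|$, whereas the claim at $\sin^2\theta=1$ needs the linear rate $(1-\alpha^2)/8$.
\item Concretely, for $|\alpha|=0.9$ your bound at $\theta=\pi/2$ decays like $e^{-3.5\cdot10^{-4}n}$, while the lemma asserts $3e^{-0.02375n}$.
\end{itemize}
Your proposed fallback, Azuma with increments bounded by $2$, gives the same exponent as McDiarmid, so it does not help.

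\textbf{The missing idea: keep $\alpha$ out of the counting variable.} Condition on the tree, on $X_1$, and on the signs $\eta_k$ of the non-leaf vertices. The leaf set depends only on the $U_k$. Hence $\mathbb{E}L_n=n/2$ exactly, and McDiarmid over the $n-1$ parent variables gives $\mathbb{P}(L_n\le n/4)\le2e^{-n/32}$ with no dependence on $\alpha$; this is where the $1/32$ comes from.

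Conditionally, the leaf signs are still independent with law $(p,1-p)$. Each leaf contributes the factor $|pe^{\mathrm{i}\theta}+(1-p)e^{-\mathrm{i}\theta}|=\{1-(1-\alpha^2)\sin^2\theta\}^{1/2}$. Averaging over the full sign before taking moduli gives per-vertex decay $(1-\alpha^2)/2$ rather than the $(1-|\alpha|)/2$ you get by conditioning on $\epsilon_j$ first. That recovers the factor $1+|\alpha|$.

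Finally, on $\{L_n>n/4\}$ with $u=(1-\alpha^2)\sin^2\theta$, use $(1-u)^{L_n/2}\le e^{-un/8}$. This yields $e^{-(1-\alpha^2)n\sin^2\theta/8}+2e^{-n/32}\le3e^{-\lambda_\alpha n\sin^2\theta}$, which is the stated bound.
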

\begin{proof}
We use the random recursive tree representation of the ERW
(see K\"ursten~%
\cite{Kursten2016}).
Recall from the definition of the ERW in the Introduction
that $X_j=\eta_jX_{U_j}$ for $j\ge2$.

The directed edges $U_j\to j$ form a random recursive tree $T_n$ on $\{1,\ldots,n\}$, rooted at $1$.
Labels increase strictly along each edge, so no cycle is possible.
Let $\mathcal L_n$ be its set of leaves, $L_n=|\mathcal L_n|$, and
\[
\mathcal G_n=\sigma\bigl(T_n,X_1,(\eta_k:2\le k\le n,\ k\notin\mathcal L_n)\bigr).
\]
For $n\ge2$, the root is not a leaf. If $j$ is a leaf, its parent $U_j$ and all ancestors of $U_j$ are nonleaves, so $X_{U_j}$ is $\mathcal G_n$-measurable.
Every $X_k$ at a nonleaf is likewise $\mathcal G_n$-measurable.
The leaf set depends only on the parent variables; their independence from the signs means that, conditionally on $\mathcal G_n$, the signs $(\eta_j:j\in\mathcal L_n)$ remain independent with probabilities $p$ and $1-p$.
Writing
\[
S_n=\sum_{k\notin\mathcal L_n}X_k
 +\sum_{j\in\mathcal L_n}\eta_jX_{U_j},
\]
we obtain
\[
\mathbb{E}_q[e^{\mathrm{i}\theta S_n}\mid\mathcal G_n]
=\exp\left(\mathrm{i}\theta\sum_{k\notin\mathcal L_n}X_k\right)
 \prod_{j\in\mathcal L_n}
 \left(pe^{\mathrm{i}\theta X_{U_j}}+(1-p)e^{-\mathrm{i}\theta X_{U_j}}\right).
\]
The modulus of each factor is independent of the sign of $X_{U_j}$, and
\begin{align*}
|pe^{\mathrm{i}\theta}+(1-p)e^{-\mathrm{i}\theta}|^2
&=p^2+(1-p)^2+2p(1-p)\cos(2\theta)\\
&=1-4p(1-p)\sin^2\theta\\
&=1-(1-\alpha^2)\sin^2\theta.
\end{align*}
Taking absolute values and then expectations gives
\[
|\chi_n^{(q)}(\theta)|
\le\mathbb{E}_q\left[\{1-(1-\alpha^2)\sin^2\theta\}^{L_n/2}\right].
\]

For $2\le j\le n$, vertex $j$ is a leaf exactly when no later vertex chooses it as parent. Independence of the $U_k$ therefore gives
\[
\mathbb{P}_q(j\in\mathcal L_n)
 =\prod_{k=j+1}^n\left(1-\frac1{k-1}\right)
 =\frac{j-1}{n-1},\qquad
\mathbb{E}_q [L_n]=\frac1{n-1}\sum_{j=2}^n(j-1)=\frac n2.
\]
Changing one parent variable from $u$ to $v$ can change the leaf status only of $u$ and $v$, so the change in $L_n$ is at most $2$.
McDiarmid's bounded differences inequality~%
\cite{McDiarmid1989}*{Lemma~1.2}, applied to the $n-1$ independent parent variables, gives
\[
\mathbb{P}_q(L_n\le n/4)
=\mathbb{P}_q(L_n-\mathbb{E} L_n\le-n/4)
\le2\exp\left(-\frac{2(n/4)^2}{4(n-1)}\right)
\le 2e^{-n/32}.
\]
Put $u=(1-\alpha^2)\sin^2\theta\in[0,1]$.
On $\{L_n>n/4\}$, $(1-u)^{L_n/2}\le(1-u)^{n/8}\le e^{-un/8}$; on the complementary event it is at most $1$.
Hence
\begin{align*}
|\chi_n^{(q)}(\theta)|
&\le\mathbb{E}_q[(1-u)^{L_n/2};L_n>n/4]
 +\mathbb{E}_q[(1-u)^{L_n/2};L_n\le n/4]\\
&\le e^{-un/8}+\mathbb{P}_q(L_n\le n/4)\\
&\le\exp\left(-\frac{1-\alpha^2}{8}n\sin^2\theta\right)
+2e^{-n/32}.
\end{align*}
Since $e^{-n/32}\le e^{-(n/32)\sin^2\theta}$, this proves \eqref{eq:global} with the stated $\lambda_\alpha$.
The argument does not depend on $q$, so the bound is uniform in $q\in[0,1]$. The proof is complete.
\end{proof}

Lemma \ref{lem:fourier} does not cover the endpoint $\alpha=-1$.
For $-1\le\alpha\le0$, the product representation of $F_n$
yields a Fourier bound when $q=1/2$.
\begin{lemma}\label{lem:antifourier}
If $-1\le\alpha\le0$ and $q=1/2$, then, for $n\ge2$ and $|\theta|\le\pi/2$,
\begin{align*}
 |\chi_n^{(1/2)}(\theta)|\le e^{-\sigma_n^2\sin^2\theta/2}.
\end{align*}
At $\alpha=-1$, the bound holds for every $q$ when $n\ge2$.
\end{lemma}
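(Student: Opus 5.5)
At $q=1/2$ we have $\beta=0$, so \eqref{eq:decomp} gives $\chi_n^{(1/2)}(\theta)=F_n(\cos\theta)$. The plan is to use the product representations \eqref{eq:product1} (for $-1<\alpha\le0$, with the convention $b_{n,j}=1$ at $\alpha=0$) and Lemma~\ref{lem:minus-one-product} (for $\alpha=-1$). Substituting $x=\cos\theta$ gives $1-x^2=\sin^2\theta$, hence
\[
\chi_n^{(1/2)}(\theta)=(\cos\theta)^{\delta_n}\prod_{j}\bigl(1-b_{n,j}\sin^2\theta\bigr),
\]
where the product runs over $1\le j\le[n/2]$, or over $1\le j\le[(n-2)/2]$ when $\alpha=-1$. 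In all these cases $0<b_{n,j}\le1$, so every factor lies in $[0,1]$.

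Next we bound each factor. Since $1-y\le e^{-y}$, each factor satisfies $0\le 1-b_{n,j}\sin^2\theta\le e^{-b_{n,j}\sin^2\theta}$. For $|\theta|\le\pi/2$ we have $|\cos\theta|=\sqrt{1-\sin^2\theta}\le e^{-\sin^2\theta/2}$. Multiplying these bounds gives
\[
|\chi_n^{(1/2)}(\theta)|\le\exp\Bigl(-\sin^2\theta\bigl(\tfrac{\delta_n}{2}+\textstyle\sum_j b_{n,j}\bigr)\Bigr)=e^{-\Theta_1(n)\sin^2\theta}.
\]
This uses definition \eqref{eq:nonminus1theta}, or \eqref{eq:minus1theta} when $\alpha=-1$.

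It remains to identify $\Theta_1(n)$. By \eqref{eq:cumroots}, which the paper notes also holds at $\alpha=-1$ for $n\ge2$, we have $2\Theta_1(n)=\kappa_2^{(1/2)}(n)=\operatorname{Var}_{1/2}(S_n)=\sigma_n^2$, because the mean vanishes at $q=1/2$. At $\alpha=0$ this is simply $\Theta_1=n/2=\sigma_n^2/2$. Hence $|\chi_n^{(1/2)}(\theta)|\le e^{-\sigma_n^2\sin^2\theta/2}$, as claimed.

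For the final assertion at $\alpha=-1$ and general $q$, Section~\ref{sec:endpoints} shows that the law of $S_n$ for $n\ge2$ does not depend on $q$. Therefore $\chi_n^{(q)}=\chi_n^{(1/2)}$, and the same bound holds. Equivalently, $J_n$ contributes nothing to \eqref{eq:decomp}, and one can check that $\sigma_n^2$ is independent of $q$ by even-moment symmetry.

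The only delicate point is the bookkeeping at $\alpha=-1$. There the degree of $F_n$ drops to $n-2$, and we must make sure that the identification $2\Theta_1=\sigma_n^2$ is applied through the analogue of \eqref{eq:cumroots} rather than through \eqref{eq:product1}. Everything else reduces to the elementary inequality $1-y\le e^{-y}$, applied on a range where every factor is nonnegative; that is exactly why the restriction $|\theta|\le\pi/2$ is used for the cosine factor.
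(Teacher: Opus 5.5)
Your proposal is correct and follows the paper's proof essentially step for step. Both arguments write $\chi_n^{(1/2)}(\theta)=F_n(\cos\theta)$, use the product representation with $0<b_{n,j}\le1$ (including the $\alpha=-1$ version), and bound each factor by $0\le1-b_{n,j}\sin^2\theta\le e^{-b_{n,j}\sin^2\theta}$ and $|\cos\theta|^{\delta_n}\le e^{-\delta_n\sin^2\theta/2}$. They then identify $2\Theta_1(n)=\kappa_2^{(1/2)}(n)=\sigma_n^2$ via \eqref{eq:cumroots} and its extension to $\alpha=-1$, and handle general $q$ at $\alpha=-1$ by the $q$-independence of the law.

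One small correction: because you work with $|\cos\theta|=\sqrt{1-\sin^2\theta}$, your bound holds for every real $\theta$. So the restriction $|\theta|\le\pi/2$ is not actually what makes the cosine factor work.
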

\begin{proof}
By \eqref{eq:decomp}, setting $q=1/2$ (so $\beta=0$) gives
$\chi_n^{(1/2)}(\theta)=F_n(\cos\theta)$.
For $-1<\alpha\le0$, put $N_n=[ n/2]$; for $\alpha=-1$ and $n\ge2$, put $N_n=[(n-2)/2]$.
The product representations in \eqref{eq:product1} and Lemma~\ref{lem:minus-one-product} give
\[
F_n(x)=x^{\delta_n}\prod_{j=1}^{N_n}\{1-b_{n,j}(1-x^2)\},
\qquad 0<b_{n,j}\le1.
\]
With $u=\sin^2\theta\in[0,1]$, each factor satisfies
$0\le1-b_{n,j}u\le e^{-b_{n,j}u}$.
Also, $\cos\theta\ge0$ on the stated interval, and
$(\cos\theta)^{\delta_n}=(1-u)^{\delta_n/2}\le e^{-\delta_nu/2}$.
Consequently, by \eqref{eq:nonminus1theta} and \eqref{eq:minus1theta},
\[
|\chi_n^{(1/2)}(\theta)|
\le\exp\left\{-u\left(\frac{\delta_n}{2}
 +\sum_{j=1}^{N_n}b_{n,j}\right)\right\}
=e^{-\Theta_1(n)u}.
\]
By \eqref{eq:cumroots} and its extension to $\alpha=-1$
stated immediately after the proof of Lemma~\ref{lem:minus-one-product},
$2\Theta_1(n)=\kappa_2^{(1/2)}(n)=\operatorname{Var}_{1/2}(S_n)=\sigma_n^2$,
proving the bound in these cases.
Finally, as noted in Section~\ref{sec:endpoints}, the law of $S_n$ at $\alpha=-1$ is independent of $q$ for $n\ge2$.
The bound therefore holds for every $q$ in this case. The proof is complete.
\end{proof}

\subsection{Refined Fourier bounds at criticality}\label{sec:criticalwindow}
When $\alpha=1/2$, the bound in Lemma~\ref{lem:fourier} is
insufficient to control the Fourier integral outside the central region
in the proof of Theorem~\ref{thm:edgeworth}.
The following lemma uses the product representation of $F_n$
to obtain a stronger Fourier bound.

\begin{lemma}\label{crit:lem:window}
Suppose $\alpha=1/2$. There are constants $\eta,c,C>0$ such that, for all sufficiently large $n$, all $q\in[0,1]$, and all $|\theta|\le\pi/2$ with $\sin^2\theta\le\eta\mathcal{H}_n/n$,
\begin{align}\label{crit:eq:window}
|\chi_n^{(q)}(\theta)|
\le C\{1+\sqrt{n\sin^2\theta}(1+n\sin^2\theta)\}
 e^{-cn\mathcal{H}_n\sin^2\theta}.
\end{align}
In particular, writing $\xi=v_n\theta$ in this region gives
$|\chi_n^{(q)}(\xi/v_n)|\le C'e^{-c'\xi^2}$ uniformly in $q$.
\end{lemma}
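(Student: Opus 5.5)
We start from the decomposition \eqref{eq:decomp} at $z=\mathrm{i}\theta$, which gives
\[
\chi_n^{(q)}(\theta)=F_n(\cos\theta)+\mathrm{i}\beta\sin\theta\,J_n(\cos\theta).
\]
Throughout we take $|\theta|\le\pi/2$, so that $x=\cos\theta\in[0,1]$. Put $u=\sin^2\theta$. The proof bounds the two terms separately: the product representation handles $F_n$, and the positive-residue decomposition of Lemma~\ref{pos:lem:residues} handles the ratio $J_n/F_n$.

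\emph{The $F_n$ term.} Use \eqref{eq:product1} with $b_{n,j}\ge1$. Each factor is $1-b_{n,j}(1-x^2)=1-b_{n,j}u$, and it can be small or negative when $u>1/b_{n,j}$. We therefore restrict to the window $u\le\eta\mathcal H_n/n$.
\begin{itemize}
\item By Lemma~\ref{crit:lem:rootbounds}, $\Theta_2(n)=O(n^2)$, so every $b_{n,j}\le Kn$ for a constant $K$.
\item In the window this gives $b_{n,j}u\le K\eta\mathcal H_n$, which is not small. So a uniform pointwise bound $1-b_{n,j}u\ge1/2$ is not available for all $j$.
\item We split the roots into \emph{large} ones, $b_{n,j}>\epsilon/u$, and \emph{small} ones.
\item For the small ones we use $|1-bu|\le e^{-bu}$ when $0\le bu\le1$. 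For every factor we use $|1-bu|\le\max(1,bu)$.
\item The number of large roots is at most $u^2\Theta_2/\epsilon^2=O(\eta^2\mathcal H_n^2)$. That is too crude, so we need a finer count.
\end{itemize}
The finer count uses $\Theta_3=O(n^3)$ from Lemma~\ref{crit:lem:rootbounds}. If $b_{n,j}\ge\gamma n$, then $b_{n,j}^3\ge\gamma^3n^3$, so there are only $O(\gamma^{-3})$ such roots, a bounded number. For all other roots, $b_{n,j}u\le\gamma\eta\mathcal H_n$ is still not small.

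So instead we control $\sum_j b_{n,j}\mathbf 1\{b_{n,j}u>1\}$:
\[
\sum_j b_{n,j}\mathbf 1\{b_{n,j}u>1\}\le u\Theta_2=O\bigl(n^2u\bigr)=O\bigl(\eta n\mathcal H_n\bigr)\cdot\frac{n u}{\eta\mathcal H_n}.
\]
This is not directly small either. The cleaner route is to compare with $\Theta_1=n\mathcal H_n/2$. The roots with $b_{n,j}\le n/\mathcal H_n$ (hence $b_{n,j}u\le\eta$) contribute
\[
\sum_j b_{n,j}\mathbf 1\{b_{n,j}\le n/\mathcal H_n\}\ge\Theta_1-\frac{\Theta_2\mathcal H_n}{n}=\Theta_1-O(n\mathcal H_n).
\]
This lower bound fails unless the constant is small. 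Hence the \textbf{main obstacle} is to show that the mass of $\Theta_1$ carried by roots with $b_{n,j}\lesssim n/\mathcal H_n$ is at least $c\,n\mathcal H_n$.

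I would try to get this from a sharper asymptotic for $\Theta_2$. Conjecturally $\Theta_2\sim Cn^2$ arises from $O(1)$ roots with $b\asymp n$, while $\Theta_1\sim n\log n/2$ comes from the bulk. Concretely:
\begin{itemize}
\item Count roots with $b\in[2^k,2^{k+1}]$ using $\Theta_2$ and $\Theta_3$.
\item Roots with $b\ge n/\mathcal H_n$ contribute to $\Theta_1$ at most $\Theta_2\mathcal H_n/n=O(n\mathcal H_n)$. We need the constant to be below $1/2$.
\item I would refine Lemma~\ref{crit:lem:rootbounds} to $\Theta_2(n)\le(1/2+o(1))\,n^2\cdot(\text{explicit})$, or use a higher threshold $b\le Mn/\mathcal H_n$, since by Chebyshev that set contributes at most $\Theta_2\mathcal H_n/(Mn)$, which is below $\Theta_1/2$ for large $M$.
\end{itemize}
With the threshold $M n/\mathcal H_n$ and $\eta\le1/(2M)$, the small roots satisfy $bu\le1/2$, so $1-bu\le e^{-bu}$. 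The large roots have $0\le$ factor, and $|1-bu|\le1$ holds whenever $bu\le2$; for $bu>2$ we bound the factor by $bu$. The number of roots with $bu>2$ is $O(u^3\Theta_3)=O(\eta^3\mathcal H_n^3)$, each factor is at most $K\eta\mathcal H_n$, and this growth is absorbed by $e^{-cn\mathcal H_nu}$ only once $u\gtrsim1/n$. For $u\lesssim1/n$, every $bu=O(1)$, so the Taylor region is handled directly. Altogether this gives
\[
|F_n(\cos\theta)|\le C e^{-c n\mathcal H_n u}.
\]

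\emph{The $J_n$ term.} Write
\[
\frac{J_n}{F_n}(x)=\sum_j\frac{w_{n,j}}{x-\rho_{n,j}}.
\]
The terms are largest for $\rho$ near $x\approx1-u/2$, so we bound $|J_n(x)|=|F_n(x)|\,\bigl|\sum_j w_{n,j}/(x-\rho_{n,j})\bigr|$. Each term equals $\prod_{k\neq j}(x-\rho_{n,k})$ times a constant, so it is the same product with the $j$th factor removed. Removing one factor $1-b_ju$ costs at most a factor $\max(1,b_ju)^{-1}$ relative to the other bound. With $1/(1-\rho)\lesssim b$ and $b\le Kn$, this gives
\[
|\sin\theta\,J_n|\lesssim \sqrt u\,\sum_j w_{n,j}\,O\bigl(b_j(1+b_ju)\bigr)e^{-cn\mathcal H_nu}.
\]
Using $\sum_j w_{n,j}=1$ and $b\le Kn$, this yields the prefactor $\sqrt{nu}\,(1+nu)$ appearing in \eqref{crit:eq:window}, with $\sqrt{u}\,n$ replaced by the bound $\sqrt{nu}\cdot\sqrt n$. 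I expect this to need care: the $\sqrt n$ must be absorbed via $\sum_j w_j/(1-\rho_j)=h_n\asymp\sqrt n$ from Lemma~\ref{pos:lem:residues}.

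The final claim follows by substituting $\theta=\xi/v_n$ with $v_n^2\sim n\mathcal H_n$. Then $n\mathcal H_n\sin^2\theta\asymp\xi^2$ and $nu\lesssim\xi^2/\mathcal H_n$, so the polynomial prefactor is absorbed into the Gaussian factor.
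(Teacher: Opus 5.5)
\paragraph{The failing step.} The gap is in your bound for $F_n(\cos\theta)$, specifically for the roots with $b_{n,j}u>2$. You control their product by the number of such roots times the logarithm of the largest factor.
\begin{itemize}
\item The available bounds allow up to $\Theta_2(n)u^2/4=O(n^2u^2)$ such roots, or $\Theta_3(n)u^3/8=O(n^3u^3)$ with your count.
\item Each factor can be as large as $Knu\le K\eta\mathcal H_n$.
\item So the exponent you must absorb is bounded only by something of order $n^2u^2\log\mathcal H_n$, against a decay of order $n\mathcal H_nu$.
\end{itemize}
At the top of the window, $u\asymp\eta\mathcal H_n/n$, the ratio of these two is of order $\eta\log\mathcal H_n$. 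With your $\Theta_3$ count it is of order $\eta^2\mathcal H_n\log\mathcal H_n$. Both tend to infinity, so no fixed $\eta$ helps, and the claim that this growth is absorbed once $u\gtrsim1/n$ is false.

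\paragraph{The missing observation.} You wrote it down and then dismissed it as ``not directly small.'' In the window,
\[
u\,\Theta_2(n)\le Cn^2u\le C\eta\, n\mathcal H_n=2C\eta\,\Theta_1(n),
\]
which \emph{is} a small fraction of $\Theta_1(n)$ once $\eta$ is small. This is exactly why the window has the shape $\sin^2\theta\le\eta\mathcal H_n/n$. The paper applies $|1-y|\le e^{-y+y^2}$ ($y\ge0$) to every factor of \eqref{eq:product1}. This gives, in one line,
\[
|F_n(\cos\theta)|\le e^{-\Theta_1(n)u+\Theta_2(n)u^2}\le e^{-(1/2-C\eta)n\mathcal H_nu},
\]
with no split into small and large roots and no use of $\Theta_3$. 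Your own split would also close if you replaced count-times-max by $\prod_{\mathrm{large}}|1-bu|\le\exp\bigl(u\sum_{b>Mn/\mathcal H_n}b\bigr)\le\exp\bigl(u\Theta_2(n)\mathcal H_n/(Mn)\bigr)$.

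\paragraph{The $J_n$ term.} Your idea is the paper's. Write
\[
J_n=\sum_j\frac{w_{n,j}}{1-\rho_{n,j}}T_{n,j},\qquad T_{n,j}=\frac{(1-\rho_{n,j})F_n}{x-\rho_{n,j}},
\]
so the coefficients sum to $h_n=O(\sqrt n)$ by Lemma~\ref{pos:lem:residues}. This, not $\sum_jw_{n,j}=1$ together with $b\le Kn$, is what yields $\sqrt{nu}$ instead of $\sqrt n\sqrt{nu}$. The extra $\sqrt n$ would indeed destroy the bound $|\chi_n^{(q)}(\xi/v_n)|\le C'e^{-c'\xi^2}$.

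Be careful that the removed factor is the linear one, $x-\rho_{n,j}$, not $1-b_{n,j}u$. Two consequences follow:
\begin{itemize}
\item For a negative root, the surviving partner $(x-\rho')/(1-\rho')$ has modulus at most $1+2B_nu$. This is the source of the factor $(1+n\sin^2\theta)$.
\item Dropping $b_{n,k}u$ from the exponent costs $e^{B_nu}\le e^{C'nu}$, which is absorbed because $\mathcal H_n\to\infty$.
\end{itemize}
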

\begin{proof}
Put $x=\cos\theta\in[0,1]$ and $\tau=\sin^2\theta=1-x^2$.
We first note that
\[
|1-y|\le e^{-y+y^2}\qquad(y\ge0).
\]

Applying the preceding inequality to the factors in \eqref{eq:product1} gives
$|1-b_{n,j}\tau|\le\exp(-b_{n,j}\tau+b_{n,j}^2\tau^2)$, while
$x^{\delta_n}=(1-\tau)^{\delta_n/2}\le e^{-\delta_n\tau/2}$.
Therefore
\begin{align*}
|F_n(x)|
&\le\exp\left\{-\frac{\delta_n}{2}\tau
 -\tau\sum_{j=1}^{[n/2]}b_{n,j}+\tau^2\sum_{j=1}^{[n/2]}b_{n,j}^2\right\}\\
&\le\exp\{-\Theta_1(n)\tau+\Theta_2(n)\tau^2\}.
\end{align*}

To estimate $J_n$, for each root $\rho_{n,j}$ of $F_n$ as in Lemma~\ref{pos:lem:residues} define the polynomial
\[
T_{n,j}(x)=(1-\rho_{n,j})\frac{F_n(x)}{x-\rho_{n,j}}.
\]
It is a polynomial by the factor theorem.
If $\rho_{n,1}',\ldots,\rho_{n,[n/2]}'$ are the positive roots,
the parity relation \eqref{guuki} and the product representation
\eqref{eq:product1} give
\[
F_n(x)=x^{\delta_n}\prod_{k=1}^{[n/2]}
 \frac{x^2-(\rho_{n,k}')^2}{1-(\rho_{n,k}')^2},\qquad
\frac{x^2-(\rho_{n,k}')^2}{1-(\rho_{n,k}')^2}
 =\frac{x-\rho_{n,k}'}{1-\rho_{n,k}'}\cdot\frac{x+\rho_{n,k}'}{1+\rho_{n,k}'}.
\]
If $\rho_{n,j}=\rho_{n,k}'>0$, cancellation leaves
\[
T_{n,j}(x)=\frac{x+\rho_{n,k}'}{1+\rho_{n,k}'}\,
 x^{\delta_n}\prod_{\ell\ne k}
 \frac{x^2-(\rho_{n,\ell}')^2}{1-(\rho_{n,\ell}')^2},
\]
and $0\le(x+\rho_{n,k}')/(1+\rho_{n,k}')\le1$.
If $\rho_{n,j}=-\rho_{n,k}'<0$, cancellation instead leaves
\[
T_{n,j}(x)=\frac{x-\rho'_{n,k}}{1-\rho'_{n,k}}\,
 x^{\delta_n}\prod_{\ell\ne k}
 \frac{x^2-(\rho_{n,\ell}')^2}{1-(\rho_{n,\ell}')^2}.
\]
By Lemma~\ref{pos:lem:residues}, $0<\rho_{n,k}'<1$.
Since $1-x=\tau/(1+x)\le\tau$ and
\[
\frac1{1-\rho_{n,k}'}=(1+\rho_{n,k}')b_{n,k}
\le2b_{n,k}\le2B_n,
\]
the remaining factor satisfies
\[
\left|\frac{x-\rho_{n,k}'}{1-\rho_{n,k}'}\right|
 =\left|1-\frac{1-x}{1-\rho_{n,k}'}\right|
 \le1+2B_n\tau.
\]
For either nonzero root, the other factors satisfy
\begin{align*}
x^{\delta_n}\prod_{\ell\ne k}
 \left|\frac{x^2-(\rho_{n,\ell}')^2}{1-(\rho_{n,\ell}')^2}\right|
&\le\exp\left\{-\frac{\delta_n}{2}\tau
 -\sum_{\ell\ne k}b_{n,\ell}\tau
 +\sum_{\ell\ne k}b_{n,\ell}^2\tau^2\right\}\\
&\le\exp\{-\Theta_1(n)\tau+b_{n,k}\tau+\Theta_2(n)\tau^2\}\\
&\le\exp\{-\Theta_1(n)\tau+B_n\tau+\Theta_2(n)\tau^2\}.
\end{align*}
If $\rho_{n,j}=0$, then $\delta_n=1$ and cancellation removes the factor $x$, leaving
\[
T_{n,j}(x)=\prod_{\ell=1}^{[n/2]}
 \frac{x^2-(\rho_{n,\ell}')^2}{1-(\rho_{n,\ell}')^2}.
\]
Its modulus is at most
$\exp\{-\tau\sum_\ell b_{n,\ell}+\tau^2\sum_\ell b_{n,\ell}^2\}$,
which is also bounded by
$\exp\{-\Theta_1(n)\tau+B_n\tau+\Theta_2(n)\tau^2\}$ because $B_n\ge1$.
Thus every root satisfies
\begin{align}\label{crit:eq:quotient-bound}
|T_{n,j}(x)|\le(1+2B_n\tau)
 \exp\{-\Theta_1(n)\tau+B_n\tau+\Theta_2(n)\tau^2\}.
\end{align}

Lemma \ref{pos:lem:residues} gives
\[
J_n(x)=\sum_{j=1}^n\frac{w_{n,j}}{1-\rho_{n,j}}T_{n,j}(x),\qquad
\sum_{j=1}^n\frac{w_{n,j}}{1-\rho_{n,j}}=h_n.
\]
Since every weight is positive, \eqref{crit:eq:quotient-bound} implies
\[
|J_n(x)|\le h_n(1+2B_n\tau)
 \exp\{-\Theta_1(n)\tau+B_n\tau+\Theta_2(n)\tau^2\}.
\]
Now \eqref{eq:decomp} reads
$\chi_n^{(q)}(\theta)=F_n(\cos\theta)+\mathrm{i}\beta\sin\theta J_n(\cos\theta)$.
Combining the preceding bounds and $|\sin\theta|=\sqrt\tau$ yields
\[
|\chi_n^{(q)}(\theta)|
\le\{1+|\beta|h_n\sqrt\tau(1+2B_n\tau)\}
 \exp\{-\Theta_1(n)\tau+B_n\tau+\Theta_2(n)\tau^2\}.
\]

At $\alpha=1/2$, Lemma~\ref{crit:lem:rootbounds} gives
$\Theta_1(n)=n\mathcal{H}_n/2$, $\Theta_2(n)\le Cn^2$, and $B_n\le C'n$.
Choose $0<\eta\le1/(8C)$ and take $n$ sufficiently large that
$\mathcal{H}_n\ge8C'$.
Then, for $\tau\le\eta\mathcal{H}_n/n$,
\[
\begin{aligned}
-\Theta_1(n)\tau+B_n\tau+\Theta_2(n)\tau^2
&\le-\frac12n\mathcal{H}_n\tau+C'n\tau+Cn^2\tau^2\\
&\le-\frac14n\mathcal{H}_n\tau.
\end{aligned}
\]
Since $h_n=O(\sqrt n)$ and $|\beta|\le1$, we obtain uniformly in $q$
\[
|\chi_n^{(q)}(\theta)|
\le C\{1+\sqrt{n\tau}(1+n\tau)\}
 \exp\{-n\mathcal{H}_n\tau/4\},
\]
which proves \eqref{crit:eq:window}.

Finally, let $\theta=\xi/v_n$. Uniformly in $q$, $v_n^2\asymp n\mathcal{H}_n$ by \eqref{vnasymp}.
On $|\theta|\le\pi/2$,
$4\theta^2/\pi^2\le\sin^2\theta\le\theta^2$, so
\[
n\mathcal{H}_n\tau\ge\frac4{\pi^2}\frac{n\mathcal{H}_n}{v_n^2}\xi^2
 \ge c\xi^2,\qquad
n\tau\le\frac n{v_n^2}\xi^2
 \le C\frac{\xi^2}{\mathcal{H}_n}\le C\xi^2.
\]
It follows that $1+\sqrt{n\tau}(1+n\tau)\le C(1+|\xi|^3)$ and
\[
|\chi_n^{(q)}(\xi/v_n)|\le C(1+|\xi|^3)e^{-c\xi^2}
 \le C_0e^{-c_0\xi^2}.
\]
The proof is complete.
\end{proof}

The following lemma will be used in the proof of Theorem~\ref{thm:edgeworth}.
\begin{lemma}\label{crit:lem:outer}
Suppose $\alpha=1/2$, and let $\eta$ be as in Lemma~\ref{crit:lem:window}.
For sufficiently large $n$, put
\[
\theta_n=\arcsin\sqrt{\frac{\eta\mathcal{H}_n}{n}}.
\]
There is a constant $d>0$ such that, uniformly in $q\in[0,1]$,
\begin{align}\label{crit:eq:outer}
v_n\int_{\theta_n\le|\theta|\le\pi/2}
 |\chi_n^{(q)}(\theta)|\,d\theta=O(n^{-d}).
\end{align}
\end{lemma}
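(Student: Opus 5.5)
Since $\chi_n^{(q)}(-\theta)=\overline{\chi_n^{(q)}(\theta)}$, it suffices to treat $\theta_n\le\theta\le\pi/2$. The plan is to split this range at an intermediate point and use Lemma~\ref{crit:lem:window} near $\theta_n$ and Lemma~\ref{lem:fourier} further out. Throughout, $v_n\asymp\sqrt{n\mathcal H_n}$ uniformly in $q$ by \eqref{vnasymp} and \eqref{eq:meanvar}.

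\emph{Outer part.} Lemma~\ref{lem:fourier} with $\alpha=1/2$ gives $\lambda=\lambda_{1/2}=1/32$, and on the whole range $\sin^2\theta\ge\eta\mathcal H_n/n$. Hence
\[
v_n\int_{\theta_n\le\theta\le\pi/2}|\chi_n^{(q)}(\theta)|\,d\theta
\le 3v_n\pi\,e^{-\lambda\eta\mathcal H_n}
\le C\sqrt{n\log n}\;n^{-\lambda\eta}.
\]
This is $O(n^{-d})$ only when $\lambda\eta>1/2$. That condition fails in general, because $\eta$ is small; this is the main obstacle, since the crude bound loses the factor $v_n$. To get around it, I would not apply the bound from $\theta_n$ onward but only on a region where $n\sin^2\theta\ge K\log n$ with $K$ large. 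There $e^{-\lambda n\sin^2\theta}\le n^{-\lambda K}$, and choosing $\lambda K>1$ gives $O(n^{-d})$.

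\emph{Intermediate region.} What remains is $\eta\mathcal H_n/n\le\sin^2\theta\le K\log n/n$. On this region $\theta\asymp\sqrt{\log n/n}$, so its length is $O(\sqrt{\log n/n})$, and the total contribution is bounded by
\[
v_n\cdot O\bigl(\sqrt{\log n/n}\bigr)\cdot\sup_{\text{region}}|\chi_n^{(q)}|
=O(\log n)\,\sup_{\text{region}}|\chi_n^{(q)}|.
\]
It therefore suffices to show $|\chi_n^{(q)}(\theta)|\le n^{-d'}$ on this region. I would do this by rerunning the proof of Lemma~\ref{crit:lem:window} in the regime $\tau=\sin^2\theta\asymp\log n/n$, using the bound
\[
|F_n|,\ |J_n|/h_n\lesssim (1+B_n\tau)\exp\{-\Theta_1\tau+B_n\tau+\Theta_2\tau^2\}.
\]
Here $\Theta_2\tau^2\le CK^2(\log n)^2$ competes with $\Theta_1\tau\asymp\eta' \log^2 n$. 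That balance is delicate, so I would avoid it by replacing the quadratic inequality $|1-y|\le e^{-y+y^2}$ with a linear one. When $b_{n,j}\tau\le1$, use $|1-y|\le e^{-y}$. When $b_{n,j}\tau>1$, use $|1-y|\le y-1$, noting that there are few such $j$ because $\sum_j b_{n,j}^2\le Cn^2$. For fixed $\tau$, the number of large $b_{n,j}$ is at most $Cn^2\tau^2$, which is $O(\log^2n)$.

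As an alternative that I would try if the root bookkeeping becomes messy, one can also recover this region from the tree representation. Conditioning on $\mathcal G_n$ as in the proof of Lemma~\ref{lem:fourier}, use $L_n\ge n/4$ with probability $1-2e^{-n/32}$. This yields $|\chi_n^{(q)}|\le e^{-(3/32)n\tau}+2e^{-n/32}$. With $n\tau\ge\eta\mathcal H_n$, this is at most $n^{-3\eta/32}$, so the intermediate contribution is $O(\log n\cdot n^{-3\eta/32})=O(n^{-d})$. This route seems cleanest, and it makes the split at $K\log n/n$ work with any $d<\min(3\eta/32,\lambda K-1/2)$.
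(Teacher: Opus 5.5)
Your argument is correct. In the end it rests on the same single ingredient as the paper, the bound of Lemma~\ref{lem:fourier}. Your tree-representation fallback is just the intermediate inequality in that lemma's proof, and its stated form $3e^{-n\sin^2\theta/32}\le 3n^{-\eta/32}$ would already suffice on your middle range.

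The only difference is how the factor $v_n$ is absorbed. The obstacle you identify comes from bounding the integral by supremum times length. The paper instead integrates the Gaussian bound directly. Using $\sin^2\theta\ge4\theta^2/\pi^2$ and substituting $u=\sqrt n\,\theta$, one gets, for suitable $c_1,c_2>0$,
\[
v_n\int_{\theta_n}^{\pi/2}e^{-c_1n\theta^2}\,d\theta\le\frac{v_n}{\sqrt n}\int_{\sqrt n\,\theta_n}^{\infty}e^{-c_1u^2}\,du\le C\sqrt{\mathcal{H}_n}\,e^{-c_2\eta\mathcal{H}_n},
\]
since $v_n/\sqrt n\asymp\sqrt{\mathcal{H}_n}$ and $n\theta_n^2\ge\eta\mathcal{H}_n$. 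So $v_n$ costs only a factor $\sqrt{\mathcal{H}_n}$, and no splitting is needed.

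Your split at $n\sin^2\theta=K\log n$ is a discretized version of this Gaussian-tail computation. On the middle range you use supremum times length $O(\sqrt{\log n/n})$, and beyond it the factor $n^{-\lambda K}$. It is a little longer but equally valid, and yields any $d<\min(3\eta/32,\lambda K-1/2)$.

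The first idea you sketched for the middle range (rerunning Lemma~\ref{crit:lem:window} with a linear inequality) is best dropped rather than kept as the primary route. After discarding the roots with $b_{n,j}\tau>1$, your only control on the lost exponent is
\[
\tau\sum_{b_{n,j}\tau>1}b_{n,j}\le\tau^2\Theta_2(n)=O\bigl(K^2(\log n)^2\bigr).
\]
This is of the same order as the gain $\Theta_1(n)\tau$. Moreover, the retained factors $b_{n,j}\tau-1$ need not be bounded by $1$. So that route does not escape the balance you were trying to avoid.
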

\begin{proof}
Lemma~\ref{lem:fourier} and
$\sin^2\theta\ge4\theta^2/\pi^2$ on $|\theta|\le\pi/2$ give
\[
|\chi_n^{(q)}(\theta)|\le3e^{-c_0n\sin^2\theta}
 \le3e^{-c_1n\theta^2},
\]
with constants independent of $q$.
Thus, changing variables to $u=\sqrt n\,\theta$,
\begin{align*}
v_n\int_{\theta_n\le|\theta|\le\pi/2}
 |\chi_n^{(q)}(\theta)|\,d\theta
&\le6v_n\int_{\theta_n}^{\pi/2}e^{-c_1n\theta^2}\,d\theta\\
&\le\frac{6v_n}{\sqrt n}
 \int_{\sqrt n\,\theta_n}^{\infty}e^{-c_1u^2}\,du\\
&\le C\sqrt{\mathcal{H}_n}\int_{\sqrt n\,\theta_n}^{\infty}e^{-c_1u^2}\,du,
\end{align*}
where the last inequality uses $v_n^2\asymp n\mathcal{H}_n$ uniformly in $q$ by \eqref{vnasymp}.
For some $C,c_2>0$, the Gaussian tail satisfies
$\int_a^\infty e^{-c_1u^2}\,du\le Ce^{-c_2a^2}$ for every $a\ge0$.
Also, $\arcsin y\ge y$ for $0\le y\le1$, so
$n\theta_n^2\ge\eta\mathcal{H}_n$.
Consequently,
\[
v_n\int_{\theta_n\le|\theta|\le\pi/2}
 |\chi_n^{(q)}(\theta)|\,d\theta
\le C\sqrt{\mathcal{H}_n}e^{-c_2n\theta_n^2}
\le C\sqrt{\mathcal{H}_n}e^{-c_2\eta\mathcal{H}_n}.
\]
Since $\mathcal{H}_n=\log n+O(1)$,
\[
\sqrt{\mathcal{H}_n}e^{-c_2\eta\mathcal{H}_n}
=O\bigl((\log n)^{1/2}n^{-c_2\eta}\bigr)
=O(n^{-d})
\]
for any fixed $d\in(0,c_2\eta)$.
This proves \eqref{crit:eq:outer}. The proof is complete.
\end{proof}

\section{Analytic logarithms and finite-time remainders}\label{sec:analytic}
In this section, we establish fourth-order expansions
of the logarithm of the moment generating function
on suitable complex disks, together with uniform
remainder estimates.
These estimates will be used in the proof of
Theorem~\ref{thm:edgeworth}.
We treat the different memory regimes and
the endpoints separately.

\begin{lemma}\label{neg:lem:log}
Fix $-1<\alpha<0$. There is an $R>0$, independent of $n$ and $q$, such that $M_n^{(q)}$ has an analytic logarithm in a neighborhood of $|z|\le R$, normalized to vanish at the origin. For $|z|\le R/2$,
\begin{align}
 \log M_n^{(q)}(z)
 &=\mu_nz+\frac{v_n^2z^2}{2}
 +\frac{\kappa_3^{(q)}(n)z^3}{6}
 +\frac{\kappa_4^{(q)}(n)z^4}{24}
 +O_p\bigl(n|z|^6+|\beta|h_n|z|^5\bigr).
 \label{neg:eq:log}
\end{align}
Moreover, $\kappa_3^{(q)}(n)=O_p(|\beta|h_n)$ and $\kappa_4^{(q)}(n)=O_p(n)$.
\end{lemma}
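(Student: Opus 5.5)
We propose to split $\log M_n^{(q)}(z)=\log F_n(\cosh z)+\log\bigl(1+\beta\sinh z\,J_n(\cosh z)/F_n(\cosh z)\bigr)$ via \eqref{eq:decomp} and control each piece on a fixed disk $|z|\le R$.

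For the first piece we use \eqref{eq:product2}: $F_n(\cosh z)=(\cosh z)^{\delta_n}\prod_j(1+b_{n,j}\sinh^2z)$ with $0<b_{n,j}<1$ when $-1<\alpha<0$. Choosing $R$ so small that $|\sinh^2 z|\le 1/2$ and $|\cosh z-1|\le 1/2$ on $|z|\le R$, every factor has modulus bounded below. Hence each factor has an analytic logarithm with $|\log(1+b\sinh^2z)|\le C|z|^2$. Summing gives an analytic $\log F_n(\cosh z)$, which is even in $z$ and $O(n)$ uniformly on $|z|\le R$. Cauchy's estimates on the circle $|z|=R$ then bound its Taylor coefficients by $O(n)R^{-k}$. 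The tail beyond degree $4$ of this even function therefore starts at $z^6$ and is $O(n|z|^6)$ for $|z|\le R/2$.

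For the second piece we use \eqref{neg:eq:paired}: $J_n(x)/F_n(x)=w_{n,0}/x+\sum 2w_{n,j}x/(x^2+t_{n,j}^2)$ at $x=\cosh z$. For $|z|\le R$, $x$ stays near $1$, so $|x^2+t^2|\ge c(1+t^2)$ uniformly in $t$; a short check is that $\operatorname{Re}x^2\ge1/2$ and $|\operatorname{Im}x^2|$ is small. Comparing with \eqref{neg:eq:hn-paired} gives $|J_n/F_n(\cosh z)|\le C h_n$. Since $h_n\to0$ or stays bounded for $\alpha<0$ (indeed $h_n\asymp n^{\alpha}$), shrinking $R$ makes $|\beta\sinh z\,J_n/F_n|\le1/2$, so the second logarithm is analytic and $O(|\beta|h_n|z|)$. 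This function is odd in $z$, because $J_n/F_n$ is odd in $x$ and $\cosh$ is even: $J_n(\cosh z)/F_n(\cosh z)$ is even in $z$ and $\sinh z$ is odd. However, $\log(1+g)$ with $g$ odd is not odd. Its even part is $\tfrac12\log(1-g^2)=O(\beta^2h_n^2|z|^2)$, which is also $O(|\beta|h_n)$-sized, so we simply bound the whole piece by $|\beta|h_n$ times Cauchy estimates. Its Taylor tail beyond degree $4$ is then $O(|\beta|h_n|z|^5)$ on $|z|\le R/2$.

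The Taylor coefficients through degree $4$ of the total logarithm are by definition $\mu_n$, $v_n^2/2$, $\kappa_3/6$ and $\kappa_4/24$. Adding the two tails gives \eqref{neg:eq:log}. The same Cauchy estimates give the coefficient bounds. For $\kappa_4$, the coefficient of $z^4$ in the first piece is $O(n)$; in the second it is $O(|\beta|h_n+\beta^2h_n^2)=O(1)$, which is also $O(n)$. For $\kappa_3$, only the second piece contributes odd coefficients. Writing the second piece as $g-g^2/2+g^3/3-\cdots$, the odd coefficients come from odd powers of $g$, each carrying at least one factor of $|\beta|h_n$ and the rest bounded, so $\kappa_3=O(|\beta|h_n)$. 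The main obstacle is the uniform lower bound $|\cosh^2z+t^2|\gtrsim1+t^2$ for complex $z$; it requires a careful choice of $R$ independent of the roots, which is routine since $\cosh^2 z$ lies in a small disk around $1$.
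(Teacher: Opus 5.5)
Your proposal is correct and follows essentially the paper's route. You split $\log M_n^{(q)}$ via \eqref{eq:decomp}, handle $F_n(\cosh z)$ through the product \eqref{eq:product2}, and bound $J_n/F_n$ by $Ch_n$ using the positive-weight paired form \eqref{neg:eq:paired}, \eqref{neg:eq:hn-paired} and $h_n\le1$. You then apply Cauchy estimates to $\log\bigl(1+\beta\sinh z\,J_n(\cosh z)/F_n(\cosh z)\bigr)$, and use the absence of a cubic term in $\log F_n(\cosh z)$ to get $\kappa_3^{(q)}(n)=O_p(|\beta|h_n)$.

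The only difference is cosmetic. You get the $O(n|z|^6)$ tail of $\log F_n(\cosh z)$ from its global $O(n)$ bound, evenness and Cauchy estimates. The paper instead expands each $\log(1+b\sinh^2z)$ uniformly in $b\in[0,1]$ and reads off the coefficients $\Theta_1(n)$ and $\Theta_1(n)/3-\Theta_2(n)/2$.
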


\begin{proof}
Choose a sufficiently small complex neighborhood of $x=1$ in which $|x|\le C$, $|1/x|\le C$, and $|x^2-1|\le1/2$. For every real $t$,
\[
 |x^2+t^2|\ge1+t^2-|x^2-1|\ge\tfrac12(1+t^2),
 \qquad
 \left|\frac{x}{x^2+t^2}\right|\le\frac{C}{1+t^2}.
\]
The paired representation~\eqref{neg:eq:paired} and positivity of $w_{n,j}$ therefore give
\[
 \left|\frac{J_n(x)}{F_n(x)}\right|
 \le w_{n,0}\left|\frac1x\right|
   +\sum_{t_{n,j}>0}2w_{n,j}\left|\frac{x}{x^2+t_{n,j}^2}\right|
 \le C\left(w_{n,0}+\sum_{t_{n,j}>0}\frac{2w_{n,j}}{1+t_{n,j}^2}\right)
 =Ch_n,
\]
where the last equality uses~\eqref{neg:eq:hn-paired}. For $z$ in a fixed sufficiently small disk, $x=\cosh z$ lies in this neighborhood. Thus, with
\[
 A_n(z)=\sinh z\,\frac{J_n(\cosh z)}{F_n(\cosh z)},
\]
we have $|A_n(z)|\le Ch_n|z|$. Since $h_n\le1$ for $-1<\alpha<0$, we may choose a fixed $R>0$ so small that $|A_n(z)|\le1/2$ on $|z|\le R$, uniformly in $n$. Consequently, $|\beta A_n(z)|\le1/2$, and $1+\beta A_n(z)$ has an analytic logarithm vanishing at zero.

Recall that \eqref{eq:product2} gives
\[
 F_n(\cosh z)=(\cosh z)^{\delta_n}
 \prod_{j=1}^{[n/2]}(1+b_{n,j}\sinh^2z),
 \qquad b_{n,j}=\frac{1}{1+(t_{n,j}')^2}\in(0,1).
\]
Shrinking $R$ further so that $|\sinh^2z|\le1/2$ ensures that every factor, as well as $\cosh z$, is nonzero. Hence $\log F_n(\cosh z)$ is analytic, with value $\log F_n(1)=0$ at zero. By~\eqref{eq:decomp},
\[
 M_n^{(q)}(z)=F_n(\cosh z)\{1+\beta A_n(z)\},\qquad
 \log M_n^{(q)}(z)=\log F_n(\cosh z)+\log(1+\beta A_n(z)).
\]
All inequalities leave a fixed margin, so the logarithms are analytic in a neighborhood of the closed disk.

For $|w|\le1/2$, $|\log(1+w)|\le C|w|$. Thus, on $|z|\le R$,
\[
 |\log(1+\beta A_n(z))|\le C|\beta|h_n|z|\le C_R|\beta|h_n.
\]
Cauchy's estimates give, for each fixed $r\ge1$,
\begin{align}\label{ineq:Cauchy}
 \left|\left.\frac{d^r}{dz^r}\log(1+\beta A_n(z))\right|_{z=0}\right|
 \le\frac{r!}{R^r}\sup_{|z|=R}|\log(1+\beta A_n(z))|
 \le C_{R,r}|\beta|h_n.
\end{align}
By Cauchy's estimates and the geometric series formula, for $|z|\le R/2$ we obtain
\begin{align}\label{eq:log}
 \log(1+\beta A_n(z))
 =\sum_{r=1}^4\frac{z^r}{r!}
 \left.\frac{d^r}{dz^r}\log(1+\beta A_n(z))\right|_{z=0}
 +O_p(|\beta|h_n|z|^5).
\end{align}

To expand $\log F_n(\cosh z)$, we use
\[
 \log\cosh z=\frac{z^2}{2}-\frac{z^4}{12}+O(|z|^6),
 \qquad \sinh^2z=z^2+\frac{z^4}{3}+O(|z|^6).
\]
Uniformly for $b\in[0,1]$,
\[
 \log(1+b\sinh^2z)
 =b\sinh^2z-\frac{b^2}{2}\sinh^4z+O(|z|^6)
 =bz^2+\left(\frac b3-\frac{b^2}{2}\right)z^4+O(|z|^6).
\]
Adding the factors gives
\begin{align*}
 \log F_n(\cosh z)
 &=\left(\frac{\delta_n}{2}+\sum_{j=1}^{[n/2]}b_{n,j}\right)z^2
 +\left\{-\frac{\delta_n}{12}
 +\sum_{j=1}^{[n/2]}\left(\frac{b_{n,j}}3-\frac{b_{n,j}^2}{2}\right)\right\}z^4
 +O_p(n|z|^6).
\end{align*}
By the definition of $\Theta_k(n)$, we obtain
\begin{align}\label{eq:Fncosh}
 \log F_n(\cosh z)
 =\Theta_1(n)z^2+
 \left(\frac{\Theta_1(n)}3-\frac{\Theta_2(n)}2\right)z^4
 +O_p(n|z|^6).
\end{align}
Since $M_n^{(1/2)}(z)=F_n(\cosh z)$,
comparing the coefficients of $z^4$ in \eqref{eq:Fncosh}
and the cumulant expansion yields
\[
 \kappa_4^{(1/2)}(n)=8\Theta_1(n)-12\Theta_2(n)=O_p(n),
\]
where the last estimate follows from $0\le\Theta_k(n)\le n/2$.
For $q=1/2$ the law is symmetric, so all odd cumulants vanish, including $\kappa_3^{(1/2)}(n)$.

Adding the expansions~\eqref{eq:log} and~\eqref{eq:Fncosh}
gives the remainder estimate in~\eqref{neg:eq:log},
while the Taylor coefficients are identified by the definition of the cumulants. Since \eqref{eq:Fncosh} contains no cubic term,
\eqref{ineq:Cauchy} yields
$\kappa_3^{(q)}(n)=O_p(|\beta|h_n)$.
Similarly, applying \eqref{ineq:Cauchy} with $r=4$, we obtain
\begin{align}\label{eq:kappa4}
 \kappa_4^{(q)}(n)
 =\kappa_4^{(1/2)}(n)+O_p(|\beta|h_n)
 =O_p(n),
\end{align}
where the last estimate follows from $|\beta|\le1$ and $h_n\le1$. The proof is complete.
\end{proof}

Here $G_k$ is as defined in~\eqref{eq:G}.
\begin{lemma}\label{pos:lem:log}
Let $0<\alpha<1/2$ and $n\ge2$. There is a constant $c_\alpha>0$ such that, with $R_n=c_\alpha G_2(n)^{-1/4}$, the logarithm of $M_n^{(q)}$ normalized to vanish at zero is analytic in a neighborhood of $|z|\le R_n$, uniformly for $q\in[0,1]$. For $|z|\le R_n/2$,
\begin{align}
 \log M_n^{(q)}(z)
 &=\mu_nz+\frac{v_n^2z^2}{2}
 +\frac{\kappa_3^{(q)}(n)z^3}{6}
 +\frac{\kappa_4^{(q)}(n)z^4}{24}\notag\\
 &\quad+O_\alpha\bigl(G_3(n)|z|^6
 +|\beta|n^\alpha G_2(n)|z|^5\bigr).
 \label{pos:eq:logrem}
\end{align}
\end{lemma}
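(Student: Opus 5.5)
We mirror the proof of Lemma~\ref{neg:lem:log}, using Lemma~\ref{pos:lem:residues} for the $J_n/F_n$ factor and Lemma~\ref{pos:lem:rootbounds} to control the real roots $\rho_{n,j}\in(-1,1)$, which now approach $1$. By \eqref{eq:decomp} we write
\[
M_n^{(q)}(z)=F_n(\cosh z)\{1+\beta A_n(z)\},\qquad A_n(z)=\sinh z\,\frac{J_n(\cosh z)}{F_n(\cosh z)},
\]
and treat the two logarithms separately.

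\textbf{The $F_n$ factor.} By \eqref{eq:product2}, $F_n(\cosh z)=(\cosh z)^{\delta_n}\prod_j(1+b_{n,j}\sinh^2 z)$ with $1<b_{n,j}\le B_n\le\Theta_2(n)^{1/2}=O_\alpha(G_2(n)^{1/2})$ by Lemma~\ref{pos:lem:rootbounds}. For $|z|\le R_n=c_\alpha G_2(n)^{-1/4}$ with $c_\alpha$ small, we get $|b_{n,j}\sinh^2z|\le 2B_n|z|^2\le1/2$. Hence every factor is nonvanishing and $\log F_n(\cosh z)$ is analytic, and we may use \eqref{eq:rootlog} with $u=-\sinh^2 z$, since $|u|<B_n^{-1}$. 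This gives
\[
\log F_n(\cosh z)=\sum_{k\ge1}\frac{(-1)^{k+1}\Theta_k(n)}{k}\sinh^{2k}z .
\]
Expanding in $z$ as in \eqref{eq:Fncosh}, the coefficients through $z^4$ are those in \eqref{eq:Fncosh}. The $z^6$ and higher contributions are bounded by $\Theta_1|z|^6+\Theta_2|z|^6+\Theta_3|z|^6$ plus a tail $\sum_{k\ge4}\Theta_k|z|^{2k}/k$. For the tail we use $\Theta_k\le \Theta_3 B_n^{k-3}$ and $B_n|z|^2\le1/2$, which bounds it by $O(\Theta_3|z|^6)$. Since $\Theta_1\le G_1\le G_3$ and $\Theta_2\le G_2\le G_3$, the whole remainder is $O_\alpha(G_3(n)|z|^6)$.

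\textbf{The $J_n/F_n$ factor.} By \eqref{pos:eq:residues}, $J_n(x)/F_n(x)=\sum_j w_{n,j}/(x-\rho_{n,j})$ with positive weights, and $\sum_j w_{n,j}/(1-\rho_{n,j})=h_n$. For $x=\cosh z$ we have $|x-1|\le C|z|^2$. Since $1-\rho_{n,j}\ge cG_2(n)^{-1/2}\ge c'R_n^{2}/c_\alpha^2$, choosing $c_\alpha$ small gives $|x-1|\le\tfrac12(1-\rho_{n,j})$, so $|x-\rho_{n,j}|\ge\tfrac12(1-\rho_{n,j})$. Therefore $|J_n/F_n|\le2h_n$ and $|A_n(z)|\le Ch_n|z|$ on $|z|\le R_n$.

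The main obstacle is that $h_n\asymp n^{\alpha}$ is unbounded, so $|\beta A_n|\le1/2$ is not automatic. We need $h_nR_n\le c$, that is $n^{\alpha}\lesssim G_2(n)^{1/4}$. Since $G_2(n)\ge n^{4\alpha}$ in all cases (it equals $n\ge n^{4\alpha}$ when $4\alpha<1$), we get $n^\alpha\le G_2(n)^{1/4}$, and a small $c_\alpha$ suffices. Then $\log(1+\beta A_n)$ is analytic on $|z|\le R_n$ with sup bounded by $C|\beta|h_n|z|$.

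For the Taylor remainder we apply Cauchy estimates on the circle $|z|=R_n$, exactly as in \eqref{ineq:Cauchy} and \eqref{eq:log}, to the function $\log(1+\beta A_n)$, whose sup there is at most $C|\beta|h_nR_n$. The fifth-order remainder on $|z|\le R_n/2$ is then $O(|\beta|h_nR_n\cdot|z|^5/R_n^5)=O(|\beta|n^\alpha G_2(n)|z|^5)$, using $R_n^{-4}=c_\alpha^{-4}G_2(n)$ and $h_n=O(n^\alpha)$.

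Adding the two expansions yields \eqref{pos:eq:logrem}. The polynomial coefficients are the cumulants by uniqueness of Taylor coefficients of $\log M_n^{(q)}$, and all constants are uniform in $q$ because $|\beta|\le1$.
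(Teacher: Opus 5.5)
Your proposal is correct and follows the paper's proof essentially step by step: you use the same split $M_n^{(q)}(z)=F_n(\cosh z)\{1+\beta A_n(z)\}$, the same root bound $1-\rho_{n,j}\gtrsim G_2(n)^{-1/2}$ giving $|A_n(z)|\le Ch_n|z|$ together with $n^\alpha G_2(n)^{-1/4}=O_\alpha(1)$, and the same Cauchy-estimate tail for $\log(1+\beta A_n)$. The only cosmetic difference is in the $F_n$ factor: you expand $\log F_n(\cosh z)$ through the full series \eqref{eq:rootlog} with the tail bound $\Theta_k\le B_n^{k-3}\Theta_3$, whereas the paper expands each $\log(1+b_{n,j}\sinh^2 z)$ to second order with an $O(b_{n,j}^3|\sinh z|^6)$ remainder, and both routes give $O_\alpha(G_3(n)|z|^6)$.
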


\begin{proof}
Set $A_n(z)=\sinh z\,J_n(\cosh z)/F_n(\cosh z)$. Lemma~\ref{pos:lem:rootbounds} gives
\[
 \max_{1\le j\le n}\frac1{1-\rho_{n,j}}\le C_\alpha G_2(n)^{1/2},
 \qquad 1-\rho_{n,j}\ge C_\alpha^{-1}G_2(n)^{-1/2}.
\]
On a fixed small disk, $|\cosh z-1|\le C|z|^2$. Thus, for sufficiently small $c_\alpha$, uniformly in $n,j$ and $|z|\le R_n$,
\begin{align*}
 |\cosh z-1|\le Cc_\alpha^2G_2(n)^{-1/2}
 \le\frac{1-\rho_{n,j}}2.
\end{align*}
Consequently,
\begin{align*}
 |\cosh z-\rho_{n,j}|
 \ge1-\rho_{n,j}-|\cosh z-1|
 \ge\frac{1-\rho_{n,j}}2>0.
\end{align*}
Since the $\rho_{n,j}$ are all the zeros of $F_n$,
it follows that $F_n(\cosh z)\ne0$. By Lemma \ref{pos:lem:residues}, we have
\[
 \left|\frac{J_n(\cosh z)}{F_n(\cosh z)}\right|
 \le\sum_{j=1}^{n}\frac{w_{n,j}}{|\cosh z-\rho_{n,j}|}
 \le2\sum_{j=1}^{n}\frac{w_{n,j}}{1-\rho_{n,j}}=2h_n.
\]
Since $|\sinh z|\le C|z|$,
\begin{align}\label{pos:eq:A-bound}
 |A_n(z)|\le Ch_n|z|,\qquad |z|\le R_n.
\end{align}
Now $h_n=O_\alpha(n^\alpha)$, and
\[
 n^\alpha G_2(n)^{-1/4}=
 \begin{cases}
 n^{\alpha-1/4},&0<\alpha<1/4,\\
 (\log n)^{-1/4},&\alpha=1/4,\\
 1,&1/4<\alpha<1/2
 \end{cases}
 =O_\alpha(1).
\]
Hence $|A_n(z)|\le C_\alpha c_\alpha$ on this disk. Reducing $c_\alpha$ ensures $|A_n(z)|\le1/2$ and therefore $|\beta A_n(z)|\le1/2$. Both factors in~\eqref{eq:decomp} then have analytic logarithms vanishing at zero. Writing $L_n(z)=\log(1+\beta A_n(z))$, we obtain
\begin{align}\label{pos:eq:log-decomp}
 \log M_n^{(q)}(z)=\log F_n(\cosh z)+L_n(z).
\end{align}

The inequality $|\log(1+w)|\le C|w|$ for $|w|\le1/2$, together with~\eqref{pos:eq:A-bound}, yields
\[
 \sup_{|z|=R_n}|L_n(z)|\le C|\beta|h_nR_n.
\]
If $L_n(z)=\sum_{r\ge1}(L_n^{(r)}(0)/r!)z^r$, Cauchy's estimates give
$|L_n^{(r)}(0)/r!|\le C|\beta|h_nR_n^{1-r}$. Thus, for $|z|\le R_n/2$,
\[
 \left|L_n(z)-\sum_{r=1}^4\frac{L_n^{(r)}(0)}{r!}z^r\right|
 \le C|\beta|h_nR_n\sum_{r=5}^\infty
       \left(\frac{|z|}{R_n}\right)^r
 \le C|\beta|h_nR_n^{-4}|z|^5.
\]
As $R_n^{-4}=c_\alpha^{-4}G_2(n)$, this proves
\begin{align}\label{pos:eq:L-rem}
 L_n(z)=\sum_{r=1}^4\frac{L_n^{(r)}(0)}{r!}z^r
 +O_\alpha\bigl(|\beta|n^\alpha G_2(n)|z|^5\bigr).
\end{align}

To expand $\log F_n(\cosh z)$, we use the product representation~\eqref{eq:product2}:
\[
 F_n(\cosh z)=(\cosh z)^{\delta_n}
 \prod_{j=1}^{[n/2]}(1+b_{n,j}\sinh^2z).
\]
Since $b_{n,j}^2\le\Theta_2(n)=O_\alpha(G_2(n))$ by Lemma \ref{pos:lem:rootbounds}, we have $b_{n,j}\le C_\alpha G_2(n)^{1/2}$. For $|z|\le R_n$,
\[
 |\sinh^2z|\le C|z|^2\le Cc_\alpha^2G_2(n)^{-1/2},
 \qquad b_{n,j}|\sinh^2z|\le C_\alpha c_\alpha^2.
\]
Reducing $c_\alpha$ further if necessary, we have
$b_{n,j}|\sinh^2z|\le1/2$ for all $n,j$ and $|z|\le R_n$.
Consequently,
\[
 \log(1+b_{n,j}\sinh^2z)
 =b_{n,j}\sinh^2z-\frac{b_{n,j}^2}{2}\sinh^4z
 +O(b_{n,j}^3|\sinh z|^6).
\]
Also $\delta_n\log\cosh z=(\delta_n/2)\log(1+\sinh^2z)$. Adding these expansions gives
\[
 \log F_n(\cosh z)
 =\Theta_1(n)\sinh^2z-\frac{\Theta_2(n)}2\sinh^4z
 +O(\Theta_3(n)|\sinh z|^6).
\]
By $b_{n,j}\ge1$, we have $\Theta_1(n)\le\Theta_2(n)\le\Theta_3(n)$. Using
\[
 \sinh^2z=z^2+\frac{z^4}{3}+O(|z|^6),\qquad
 \sinh^4z=z^4+O(|z|^6),
\]
and $\Theta_3(n)=O_\alpha(G_3(n))$ from Lemma~\ref{pos:lem:rootbounds}, we conclude that
\begin{align}\label{pos:eq:F-rem}
 \log F_n(\cosh z)
 =\Theta_1(n)z^2+
 \left(\frac{\Theta_1(n)}3-\frac{\Theta_2(n)}2\right)z^4
 +O_\alpha(G_3(n)|z|^6).
\end{align}
Combining~\eqref{pos:eq:log-decomp}, \eqref{pos:eq:L-rem},
and~\eqref{pos:eq:F-rem} gives the remainder estimate
in~\eqref{pos:eq:logrem}, while the Taylor coefficients
are identified by the definition of the cumulants. The proof is complete.
\end{proof}

\begin{lemma}\label{crit:lem:log}
Let $\alpha=1/2$. There is a constant $c>0$ such that, for $R_n=c/\sqrt n$, the logarithm of $M_n^{(q)}$ normalized to vanish at zero is analytic in a neighborhood of $|z|\le R_n$, uniformly for $q\in[0,1]$. For $|z|\le R_n/2$,
\begin{align}
 \log M_n^{(q)}(z)
 &=\mu_nz+\frac{v_n^2z^2}{2}
 +\frac{\kappa_3^{(q)}(n)z^3}{6}
 +\frac{\kappa_4^{(q)}(n)z^4}{24}\notag\\
 &\quad+O\bigl(n^3|z|^6+|\beta|n^{5/2}|z|^5\bigr).
 \label{crit:eq:log}
\end{align}
Moreover, $\kappa_3^{(q)}(n)=O(|\beta|n^{3/2})$ and $\kappa_4^{(q)}(n)=O(n^2)$.
\end{lemma}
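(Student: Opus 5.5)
The proof follows the pattern of Lemma~\ref{pos:lem:log}, with the root bounds of Lemma~\ref{pos:lem:rootbounds} replaced by those of Lemma~\ref{crit:lem:rootbounds}. We again write $A_n(z)=\sinh z\,J_n(\cosh z)/F_n(\cosh z)$ and split
\[
\log M_n^{(q)}(z)=\log F_n(\cosh z)+\log(1+\beta A_n(z))
\]
using \eqref{eq:decomp}.

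\emph{Step 1: a zero-free disk.} Lemma~\ref{crit:lem:rootbounds} gives $B_n=O(n)$. Each positive root satisfies $1/(1-\rho'_{n,k})\le 2b_{n,k}\le 2B_n$, and nonpositive roots satisfy $1-\rho_{n,j}\ge1$, so $1-\rho_{n,j}\ge c_0/n$ for all $j$. For $|z|\le R_n=c/\sqrt n$ we have $|\cosh z-1|\le C|z|^2\le Cc^2/n$. Choosing $c$ small gives $|\cosh z-\rho_{n,j}|\ge(1-\rho_{n,j})/2$, so $F_n(\cosh z)\neq0$. The positive-residue decomposition of Lemma~\ref{pos:lem:residues} (valid at $\alpha=1/2$) then yields
\[
|J_n/F_n|(\cosh z)\le 2\sum_j w_{n,j}/(1-\rho_{n,j})=2h_n.
\]
Since $h_n=O(\sqrt n)$ at $\alpha=1/2$, this gives $|A_n(z)|\le Ch_n|z|\le Cc$. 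Shrinking $c$ ensures $|\beta A_n|\le1/2$. Likewise $b_{n,j}|\sinh^2 z|\le CB_n c^2/n\le 1/2$, so every factor in \eqref{eq:product2} has an analytic logarithm with a fixed margin. This gives analyticity near $|z|\le R_n$, uniformly in $q$.

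\emph{Step 2: the two remainders.} Put $L_n(z)=\log(1+\beta A_n(z))$. Then $\sup_{|z|=R_n}|L_n|\le C|\beta|h_nR_n$, and Cauchy's estimates give
\[
\Bigl|L_n(z)-\sum_{r\le4}L_n^{(r)}(0)z^r/r!\Bigr|\le C|\beta|h_nR_n^{-4}|z|^5=O(|\beta|n^{5/2}|z|^5)
\]
for $|z|\le R_n/2$. For the other term, expand each factor of \eqref{eq:product2} as in Lemma~\ref{pos:lem:log}:
\[
\log F_n(\cosh z)=\Theta_1\sinh^2z-\tfrac12\Theta_2\sinh^4z+O(\Theta_3|z|^6).
\]
Then re-expand $\sinh^2z$ and $\sinh^4 z$ in powers of $z$. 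The discarded $z^6$ terms are bounded by $(\Theta_1+\Theta_2+\Theta_3)|z|^6$, which is $O(n^3|z|^6)$ by Lemma~\ref{crit:lem:rootbounds}. The Taylor coefficients are identified as cumulants, which gives \eqref{crit:eq:log}.

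\emph{Step 3: the cumulant bounds.} The function $\log F_n(\cosh z)$ is even, so it has no cubic term. Hence $\kappa_3^{(q)}(n)=L_n'''(0)$, and the Cauchy bound gives $|\kappa_3^{(q)}(n)|\le C|\beta|h_nR_n^{-2}=O(|\beta|n^{3/2})$. For the fourth cumulant, $\kappa_4^{(q)}=\kappa_4^{(1/2)}+L_n^{(4)}(0)$. Here $\kappa_4^{(1/2)}=8\Theta_1-12\Theta_2=O(n^2)$ by \eqref{eq:cumroots} and Lemma~\ref{crit:lem:rootbounds}, and $|L_n^{(4)}(0)|\le Ch_nR_n^{-3}=O(n^2)$. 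Together these give $\kappa_4^{(q)}(n)=O(n^2)$.

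The main obstacle is Step 1. The radius must shrink like $n^{-1/2}$, matched to the largest $b_{n,j}$, which is $O(n)$. Only the bound $B_n=O(n)$ from the crude estimate $\Theta_2=O(n^2)$ is available, but it suffices. The Cauchy estimates then convert the radius directly into the stated powers of $n$.
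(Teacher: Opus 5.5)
Your proposal is correct and follows the paper's proof essentially step for step. The shared steps are the zero-free disk of radius $c/\sqrt n$ obtained from $B_n=O(n)$, the positive-residue bound $|J_n/F_n|\le 2h_n$ with $h_n=O(\sqrt n)$, Cauchy estimates for $L_n=\log(1+\beta A_n)$, the product expansion of $\log F_n(\cosh z)$ controlled by Lemma~\ref{crit:lem:rootbounds}, and the splitting $\kappa_4^{(q)}(n)=\kappa_4^{(1/2)}(n)+L_n^{(4)}(0)$; the only differences are cosmetic (you derive $1-\rho_{n,j}\ge c_0/n$ explicitly from $B_n=O(n)$, and bound the sixth-order remainder by $\Theta_1+\Theta_2+\Theta_3$ rather than via $\Theta_1\le\Theta_2\le\Theta_3$).
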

\begin{proof}
Set $A_n(z)=\sinh z\,J_n(\cosh z)/F_n(\cosh z)$.
By Lemma~\ref{crit:lem:rootbounds}, $1-\rho_{n,j}\ge (Cn)^{-1}$.
For $|z|\le R_n$, choosing $c>0$ sufficiently small gives
\[
 |\cosh z-1|\le \frac{Cc^2}{n}
 \le\frac{1-\rho_{n,j}}2,
 \qquad b_{n,j}|\sinh^2z|\le\frac12.
\]
Thus $F_n(\cosh z)\ne0$, since the $\rho_{n,j}$ are all
the zeros of $F_n$. By Lemma~\ref{pos:lem:residues},
\[
 |A_n(z)|\le Ch_n|z|\le Cc,
\]
where $h_n=O(\sqrt n)$.
Reducing $c$ further ensures $|\beta A_n(z)|\le1/2$.
Hence the factors in~\eqref{eq:decomp} have analytic
logarithms vanishing at zero, and
\begin{align}\label{crit:eq:log-decomp}
 \log M_n^{(q)}(z)
 =\log F_n(\cosh z)+L_n(z),
 \qquad L_n(z)=\log(1+\beta A_n(z)).
\end{align}

Since $|L_n(z)|\le C|\beta|h_n|z|$,
Cauchy's estimates give
\begin{align}\label{crit:eq:L-coeff}
 \left|\frac{L_n^{(r)}(0)}{r!}\right|
 \le C|\beta|h_nR_n^{1-r}.
\end{align}
As in the proof of Lemma~\ref{pos:lem:log},
summing the Taylor tail yields
\begin{align}\label{crit:eq:L-rem}
 L_n(z)=\sum_{r=1}^4\frac{L_n^{(r)}(0)}{r!}z^r
 +O(|\beta|n^{5/2}|z|^5),
\end{align}
for $|z|\le R_n/2$.
Moreover, \eqref{crit:eq:L-coeff} gives
\begin{align}
 L_n^{(3)}(0)&=O(|\beta|n^{3/2}),
 \label{crit:eq:L-third}\\
 L_n^{(4)}(0)&=O(|\beta|n^2).
 \label{crit:eq:L-fourth}
\end{align}

By Lemma~\ref{crit:lem:rootbounds},
$b_{n,j}=O(n)$, while
$|\sinh^2z|\le Cc^2/n$ for $|z|\le R_n$.
Reducing $c$ further if necessary, we have
$b_{n,j}|\sinh^2z|\le1/2$ for all $n,j$
and $|z|\le R_n$.
The same argument as in the proof of
Lemma~\ref{pos:lem:log}, using~\eqref{eq:product2},
gives
\[
 \log F_n(\cosh z)
 =\Theta_1(n)\sinh^2z
 -\frac{\Theta_2(n)}2\sinh^4z
 +O(\Theta_3(n)|\sinh z|^6).
\]
Since $\Theta_1(n)\le\Theta_2(n)\le\Theta_3(n)=O(n^3)$ by Lemma \ref{crit:lem:rootbounds},
we obtain
\begin{align}\label{crit:eq:F-rem}
 \log F_n(\cosh z)
 =\Theta_1(n)z^2+
 \left(\frac{\Theta_1(n)}3-\frac{\Theta_2(n)}2\right)z^4
 +O(n^3|z|^6).
\end{align}
Since $\Theta_1(n)=n\mathcal{H}_n/2$
and $\Theta_2(n)=O(n^2)$ by Lemma \ref{crit:lem:rootbounds}, we have
\begin{align}\label{crit:eq:k4-sym-bound}
 \kappa_4^{(1/2)}(n)
 =8\Theta_1(n)-12\Theta_2(n)=O(n^2).
\end{align}

Combining~\eqref{crit:eq:log-decomp},
\eqref{crit:eq:L-rem}, and~\eqref{crit:eq:F-rem}
gives the remainder estimate in~\eqref{crit:eq:log},
while the Taylor coefficients are identified
by the definition of the cumulants.
Since \eqref{crit:eq:F-rem} contains no cubic term,
\eqref{crit:eq:L-third} gives
$\kappa_3^{(q)}(n)=O(|\beta|n^{3/2})$.
Similarly, \eqref{crit:eq:L-fourth} and
\eqref{crit:eq:k4-sym-bound} yield
\[
 \kappa_4^{(q)}(n)
 =\kappa_4^{(1/2)}(n)+O(|\beta|n^2)
 =O(n^2).
\]
The proof is complete.
\end{proof}
\begin{lemma}\label{lem:endpoint-log}
Let $\alpha\in\{-1,0\}$ and $n\ge2$.
There is an $R>0$, independent of $n$ and $q$,
such that $M_n^{(q)}$ has an analytic logarithm
in a neighborhood of $|z|\le R$, normalized to vanish
at the origin. For $|z|\le R/2$,
\begin{align}\label{eq:nonposlogsummary}
 \log M_n^{(q)}(z)
 &=\mu_nz+\frac{v_n^2z^2}{2}
 +\frac{\kappa_3^{(q)}(n)z^3}{6}
 +\frac{\kappa_4^{(q)}(n)z^4}{24}
 +O\bigl(n|z|^6+|\beta||z|^5\bigr).
\end{align}
Moreover, $\kappa_3^{(q)}(n)=O(|\beta|)$
and $\kappa_4^{(q)}(n)=O(n)$.

For $\alpha=0$, the cumulants satisfy
\begin{align}\label{eq:independentcums}
 \kappa_3^{(q)}(n)=-2\beta(1-\beta^2),\qquad
 \kappa_4^{(q)}(n)=-2n+8\beta^2-6\beta^4.
\end{align}
For $\alpha=-1$ and $n\ge2$, we have
\begin{align}\label{eq:endpoint}
 \mathbb{E}_q[S_n]=\kappa_3^{(q)}(n)=0.
\end{align}
\end{lemma}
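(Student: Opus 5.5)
The plan is to treat the two endpoints separately, since in both cases $M_n^{(q)}$ has an explicit or nearly explicit form.

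\emph{The case $\alpha=0$.} By \eqref{eq:independent}, $M_n^{(q)}(z)=(\cosh z)^n(1+\beta\tanh z)$. Choose $R$ with $|\tanh z|\le1/2$ and $|\cosh z-1|\le1/2$ on $|z|\le R$. Then both factors have analytic logarithms vanishing at $0$, and
\[
\log M_n^{(q)}(z)=n\log\cosh z+\log(1+\beta\tanh z).
\]
The first term is even and analytic with uniformly bounded Taylor coefficients, so it equals $n(z^2/2-z^4/12)+O(n|z|^6)$. The second term is bounded by $C|\beta|$ on $|z|\le R$. Cauchy's estimates then give its Taylor coefficients up to order $4$ together with a remainder $O(|\beta||z|^5)$ on $|z|\le R/2$, exactly as in \eqref{ineq:Cauchy}--\eqref{eq:log}.

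For the exact cumulants, expand
\[
\log(1+\beta\tanh z)=\beta\tanh z-\tfrac{\beta^2}{2}\tanh^2z+\tfrac{\beta^3}{3}\tanh^3z-\tfrac{\beta^4}{4}\tanh^4z+\cdots,
\]
using $\tanh z=z-z^3/3+O(z^5)$. The $z^3$ coefficient is $-\beta/3+\beta^3/3$, so $\kappa_3=6\cdot\tfrac13(\beta^3-\beta)=-2\beta(1-\beta^2)$. The $z^4$ coefficient of this term is $\tfrac{\beta^2}{3}-\tfrac{\beta^4}{4}$, since $\tanh^2z=z^2-\tfrac23z^4+\cdots$. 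Adding the $-n/12$ from $n\log\cosh z$ and multiplying by $24$ gives $\kappa_4=-2n+8\beta^2-6\beta^4$, which is \eqref{eq:independentcums}. These formulas also give $\kappa_3=O(|\beta|)$ and $\kappa_4=O(n)$.

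\emph{The case $\alpha=-1$.} By Section~\ref{sec:endpoints}, the law of $S_n$ for $n\ge2$ does not depend on $q$, so we may take $q=1/2$. Then $M_n^{(q)}(z)=F_n(\cosh z)$ by \eqref{eq:decomp}, and this law is symmetric. Hence $\mu_n=0$ and all odd cumulants vanish, which proves \eqref{eq:endpoint}.

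Next use the product form from Lemma~\ref{lem:minus-one-product},
\[
F_n(\cosh z)=(\cosh z)^{\delta_n}\prod_{j}\bigl(1+b_{n,j}\sinh^2z\bigr),\qquad 0<b_{n,j}<1.
\]
Choose $R$ with $|\sinh^2z|\le1/2$ and $\cosh z\ne0$ on $|z|\le R$. Every factor then has an analytic logarithm, and the uniform expansion for $b\in[0,1]$ used in Lemma~\ref{neg:lem:log} yields \eqref{eq:Fncosh} with remainder $O(n|z|^6)$. Since $\kappa_4=8\Theta_1-12\Theta_2$ and $0\le\Theta_k\le n/2$, we get $\kappa_4=O(n)$. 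There is no $\beta$-term here, because $\beta$ effectively does not enter the law. This is consistent with the stated bound, whose $|\beta||z|^5$ part is simply not needed.

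No step is a real obstacle. The only points requiring care are choosing $R$ uniformly in $n$ and $q$, which the bounds $|\tanh|\le1/2$ and $b_{n,j}<1$ handle, and the bookkeeping of the $z^4$ coefficients at $\alpha=0$.
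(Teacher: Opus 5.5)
Your proof is correct and follows the paper's argument. At $\alpha=0$ you use the same splitting $\log M_n^{(q)}=n\log\cosh z+\log(1+\beta\tanh z)$ with Cauchy estimates, and at $\alpha=-1$ you use $q$-independence together with the product formula of Lemma~\ref{lem:minus-one-product} and the uniform expansion from Lemma~\ref{neg:lem:log}. The only difference is cosmetic: you read off the exact cumulants at $\alpha=0$ by expanding $\log(1+\beta\tanh z)$ directly, whereas the paper uses additivity of cumulants over the independent increments and the central moments of $X_1$; both yield \eqref{eq:independentcums}.
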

\begin{proof}
For $\alpha=0$, \eqref{eq:independent} gives
\[
 \log M_n^{(q)}(z)
 =n\log\cosh z+\log(1+\beta\tanh z).
\]
Both terms are analytic on a fixed sufficiently small disk,
uniformly for $|\beta|\le1$.
Since $n\log\cosh z$ is even, its fourth-order Taylor
remainder is $O(n|z|^6)$.
Moreover, $|\log(1+\beta\tanh z)|\le C|\beta||z|$
on this disk, so Cauchy's estimates give a
fourth-order remainder $O(|\beta||z|^5)$. This proves~\eqref{eq:nonposlogsummary} for $\alpha=0$.
Since $\alpha=0$, the increments are independent, and
$X_2,\ldots,X_n$ are fair signs.
As cumulants are additive for independent random variables,
we compute the cumulants of $X_1$ and a fair sign separately.
Since $\mathbb{E}_q[X_1]=\beta$ and $X_1^2=1$, we have
\[
 \kappa_3(X_1)
 =\mathbb{E}_q[(X_1-\beta)^3]
 =-2\beta(1-\beta^2),
\]
and
\[
 \kappa_4(X_1)
 =\mathbb{E}_q[(X_1-\beta)^4]
 -3(1-\beta^2)^2
 =-2+8\beta^2-6\beta^4.
\]
On the other hand, the logarithm of the moment generating
function of a fair sign is
\[
 \log\cosh z=\frac{z^2}{2}-\frac{z^4}{12}+O(|z|^6),
\]
so its third and fourth cumulants are $0$ and $-2$,
respectively.
Consequently,
\[
 \kappa_3^{(q)}(n)=\kappa_3(X_1),\qquad
 \kappa_4^{(q)}(n)=\kappa_4(X_1)-2(n-1),
\]
which proves~\eqref{eq:independentcums}.

For $\alpha=-1$ and $n\ge2$, we have $X_2=-X_1$
and hence $S_2=0$.
Since the transition kernel in~\eqref{eq:transition}
is invariant under sign reversal, the distribution
of $S_n$ is symmetric and independent of $q$
for every $n\ge2$. This proves~\eqref{eq:endpoint}.
Moreover, the product representation in
Lemma \ref{lem:minus-one-product} gives
\[
 F_n(\cosh z)
 =(\cosh z)^{\delta_n}
 \prod_{j=1}^{[(n-2)/2]}(1+b_{n,j}\sinh^2z),
 \qquad 0<b_{n,j}<1.
\]
Since $M_n^{(q)}(z)=F_n(\cosh z)$, the same argument
as in the proof of Lemma~\ref{neg:lem:log}
shows that its normalized logarithm is analytic
on a fixed sufficiently small disk.
On this disk, uniformly in $n,j$,
\[
 \log(1+b_{n,j}\sinh^2z)
 =b_{n,j}z^2+
 \left(\frac{b_{n,j}}3-\frac{b_{n,j}^2}{2}\right)z^4
 +O(|z|^6).
\]
Together with
$\log\cosh z=z^2/2-z^4/12+O(|z|^6)$,
this yields
\[
 \begin{aligned}
 \log M_n^{(q)}(z)
 &=\left(\frac{\delta_n}{2}
 +\sum_{j=1}^{[(n-2)/2]} b_{n,j}\right)z^2\\
 &\quad+\left\{-\frac{\delta_n}{12}
 +\sum_{j=1}^{[(n-2)/2]}\left(\frac{b_{n,j}}3
 -\frac{b_{n,j}^2}{2}\right)\right\}z^4
 +O(n|z|^6).
 \end{aligned}
\]
Thus, \eqref{eq:nonposlogsummary} also holds for $\alpha=-1$. In particular, since $0<b_{n,j}<1$, we have
\[
 \kappa_4^{(q)}(n)
 =-2\delta_n+\sum_{j=1}^{[(n-2)/2]}(8b_{n,j}-12b_{n,j}^2)
 =O(n).
\]
The proof is complete.
\end{proof}

\section{Third and fourth cumulants}\label{sec:cumulants}
In this section, we determine the asymptotic behavior
of the third and fourth cumulants of $S_n$.
These results will be used to establish the optimal
rates of Gaussian approximation in the local and
central limit theorems.
\subsection{The third cumulant}
The limiting coefficients are
\begin{align*}
 \gamma_3(\alpha,\beta)
 &=\frac{\beta}{1-2\alpha}
 \left\{\frac{6\alpha}{\Gamma(1+\alpha)\Gamma(1+2\alpha)}
 -\frac{3(1+\alpha)}{\Gamma(1+3\alpha)}\right\}
 +\frac{2\beta^3}{\Gamma(1+\alpha)^3},
 \qquad 0<\alpha<1/2,\\
 \gamma_{3,c}(\beta)
 &=\frac{2\beta}{\sqrt\pi}
 \left(6\log2-8+\frac{8\beta^2}{\pi}\right).
\end{align*}
\begin{proposition}\label{prop:third}
For $0<\alpha<1/2$, uniformly in $q$,
\begin{align}\label{eq:k3limit}
 \kappa_3^{(q)}(n)=O_\alpha(|\beta|n^{3\alpha}),\qquad
 n^{-3\alpha}\kappa_3^{(q)}(n)\longrightarrow\gamma_3(\alpha,\beta).
\end{align}
For every $\beta\ne0$, $\gamma_3(\alpha,\beta)\ne0$.
At $\alpha=1/2$,
\begin{align*}
 n^{-3/2}\kappa_3^{(q)}(n)\longrightarrow\gamma_{3,c}(\beta),
\end{align*}
again uniformly in $q$.
For every $\beta\ne0$, $\gamma_{3,c}(\beta)\ne0$.
\end{proposition}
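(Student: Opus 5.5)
Since odd moments are $\beta$ times their value at $q=1$ and even moments do not depend on $q$, I will write
\[
\kappa_3^{(q)}(n)=\mathbb E_q[S_n^3]-3\mathbb E_q[S_n^2]\mu_n+2\mu_n^3
=\beta\bigl(m_3(n)-3\sigma_n^2h_n\bigr)+2\beta^3h_n^3,
\]
where $m_3(n)=\mathbb E_1[S_n^3]$ and $\mu_n=\beta h_n$ by \eqref{eq:meanvar}. This reduces the proposition to exact asymptotics of $m_3(n)$, $\sigma_n^2$ and $h_n$. Uniformity in $q$ is automatic, because $\beta$ enters only through the polynomial prefactors $\beta$ and $\beta^3$. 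It then suffices to show that $n^{-3\alpha}\{m_3(n)-3\sigma_n^2h_n\}$ and $n^{-3\alpha}h_n^3$ converge.

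For $h_n$ I will use $h_n=\Gamma(n+\alpha)/(\Gamma(1+\alpha)\Gamma(n))\sim n^\alpha/\Gamma(1+\alpha)$, which gives the $\beta^3$ term $2\beta^3/\Gamma(1+\alpha)^3$.

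For $\sigma_n^2$ I will use \eqref{eq:meanvar}: $\sigma_n^2=(n-2\alpha h_2(n))/(1-2\alpha)$ with $h_2(n)\sim n^{2\alpha}/\Gamma(1+2\alpha)$. The product $\sigma_n^2h_n$ therefore has a leading part of order $n^{1+\alpha}$ and a part $-2\alpha h_2h_n/(1-2\alpha)\sim -2\alpha n^{3\alpha}/((1-2\alpha)\Gamma(1+\alpha)\Gamma(1+2\alpha))$.

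For $m_3$ I will solve \eqref{eq:m3-rec} with $q=1$ by variation of constants, dividing by $h_3(n+1)$:
\[
\frac{m_3(n)}{h_3(n)}=1+\sum_{k=1}^{n-1}\frac{(3+\alpha/k)h_k}{h_3(k+1)}.
\]
The key step is an exact evaluation of the part of order $n^{1+\alpha}$, so that it cancels with $3\sigma_n^2h_n$. I will guess $m_3(n)=a\,nh_n+b\,h_2\text{-type term}+c\,h_3(n)$ and verify it against the recurrence. Indeed, $n h_n$ satisfies
\[
(n+1)h_{n+1}=\Bigl(1+\tfrac{3\alpha}{n}\Bigr)nh_n+(1-2\alpha)h_n+\tfrac{\alpha(1+\alpha)}{n}h_n\cdot\tfrac{n}{n}\ldots,
\]
so some care is needed. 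More cleanly, I will use the identities $h_n(1+\alpha/n)=h_{n+1}$ and look for constants with
\[
m_3(n)=\frac{3}{1-2\alpha}\Bigl(nh_n-\frac{2\alpha}{?}\,g_n\Bigr)+C\,h_3(n),
\]
where $g_n$ solves the recurrence $g_{n+1}=(1+3\alpha/n)g_n+(\text{forcing from }h_2h_n)$. Equivalently, I will compute $D_n:=m_3(n)-3\sigma_n^2h_n$ directly. Its recurrence, derived from \eqref{eq:m1-rec}, \eqref{eq:m2-rec}, \eqref{eq:m3-rec}, is
\[
D_{n+1}=\Bigl(1+\frac{3\alpha}{n}\Bigr)D_n+(\text{forcing}).
\]
Expanding $3\sigma_{n+1}^2h_{n+1}=3\bigl((1+2\alpha/n)\sigma_n^2+1\bigr)(1+\alpha/n)h_n$, the $n^{1+\alpha}$ terms should cancel, leaving forcing $=(\alpha/n)h_n-6\alpha^2\sigma_n^2h_n/n^2$. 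This is $O(n^{\alpha-1})$ because $\sigma_n^2=O(n)$. Dividing by $h_3(n+1)\asymp n^{3\alpha}$ gives summands $O(n^{-2\alpha-1})$, which are summable. Hence $D_n/h_3(n)\to$ a constant $L(\alpha)$, with $h_3(n)\sim n^{3\alpha}/\Gamma(1+3\alpha)$, which also yields the uniform bound $O(|\beta|n^{3\alpha})$.

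The main obstacle is identifying $L(\alpha)$ in closed form. The sum $1+\sum_k(\text{forcing}_k)/h_3(k+1)$ should telescope after I write the forcing in terms of differences of $h_2(k)h_k/h_3(k)$ and $kh_k/h_3(k)$. I will try the ansatz
\[
D_n=x\,h_3(n)+y\,h_2(n)h_n+z\,nh_n
\]
and match coefficients in the recurrence using $h_2(n+1)h_{n+1}=(1+3\alpha/n+2\alpha^2/n^2)h_2h_n$. I expect $y=6\alpha/(1-2\alpha)$ and $x=-3(1+\alpha)/(1-2\alpha)$, possibly with extra $nh_n$ pieces that are lower order. These values reproduce $\gamma_3$.

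For nonvanishing, put $\beta^2=t\in(0,1]$. Then $\gamma_3/\beta$ is affine in $t$, so it suffices to check that it has the same sign at $t=0$ and $t=1$. I expect the values to be negative at $t=0$ and positive at $t=1$; if the signs agree the argument closes, and otherwise I will need a gamma-function inequality. I am unsure about this step, and a Gautschi-type inequality is what I would try first.

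At $\alpha=1/2$ I will repeat the computation with $\sigma_n^2=n\mathcal H_n$, $h_n\sim2\sqrt{n/\pi}$ and $h_3(n)\asymp n^{3/2}$. Now the forcing contains $\sigma_n^2h_n/n^2\sim\mathcal H_n n^{-1/2}$, which after division by $n^{3/2}$ is still summable, so a limit again exists. Evaluating it yields constants involving $\log2$ from $\sum\mathcal H_k$-type telescoping, giving $\gamma_{3,c}$. The sign check is explicit: $6\log2-8<0$, and $6\log2-8+8/\pi<0$ as well, so $\gamma_{3,c}(\beta)\ne0$ for every $\beta\ne0$.
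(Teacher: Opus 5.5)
\textbf{Main gap: nonvanishing of $\gamma_3(\alpha,\beta)$ for $0<\alpha<1/2$.} Your reduction to the affine function $t\mapsto\gamma_3/\beta$ on $t=\beta^2\in(0,1]$ is logically fine. However, its endpoint values \emph{are} the gamma-function inequalities, and you prove neither.

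The endpoint $t=0$ follows from your own recurrence: with the corrected forcing (see below), $D_1=-2$ and the forcing is negative, so $\lim D_n/h_3(n)<-2$. The endpoint $t=1$ requires
\[
\frac{1}{1-2\alpha}\Bigl\{\frac{6\alpha}{\Gamma(1+\alpha)\Gamma(1+2\alpha)}-\frac{3(1+\alpha)}{\Gamma(1+3\alpha)}\Bigr\}+\frac{2}{\Gamma(1+\alpha)^3}<0,\qquad 0<\alpha<1/2 .
\]
The crude bound $\lim D_n/h_3(n)<-2$ only gives $\gamma_3(\alpha,1)<2/\Gamma(1+\alpha)^3-2/\Gamma(1+3\alpha)$. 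This right-hand side is nonnegative, because log-convexity of $\Gamma$ gives $\Gamma(1+\alpha)^3\le\Gamma(1+3\alpha)$.

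Your stated expectation (negative at $t=0$, positive at $t=1$) would, by the intermediate value theorem, produce some $\beta\ne0$ with $\gamma_3=0$. That would make the statement false, so no gamma-function inequality could rescue that branch. In fact both endpoints are negative.

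The missing idea, which the paper uses, is not to split off $2\beta^3h_n^3$. Instead, write the recurrence for $\kappa_3^{(q)}$ itself. Substituting $\sigma_n^2=v_n^2+\beta^2h_n^2$, the $\beta^3n^{-2}$ terms cancel, and
\[
\kappa_3^{(q)}(n+1)=\Bigl(1+\frac{3\alpha}{n}\Bigr)\kappa_3^{(q)}(n)-\frac{2\alpha\mu_n}{n}\Bigl(1+\frac{3\alpha v_n^2}{n}-\frac{\alpha^2\mu_n^2}{n^2}\Bigr).
\]
The bracket is at least $1-\alpha^2>0$, since $v_n^2\ge0$ and $|\mu_n|\le n$. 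For $\beta>0$ the forcing is therefore strictly negative, and $\kappa_3^{(q)}(1)=-2\beta(1-\beta^2)\le0$. Hence $\lim\kappa_3^{(q)}(n)/h_3(n)<0$, so $\gamma_3(\alpha,\beta)<0$. Reversing the initial sign handles $\beta<0$, and no gamma-function inequality is needed.

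\textbf{Smaller points.} The forcing in your $D_n$ recurrence is $-2\alpha h_n/n-6\alpha^2\sigma_n^2h_n/n^2$, not $+\alpha h_n/n-\cdots$, because $(3+\alpha/n)h_n-3(1+\alpha/n)h_n=-2\alpha h_n/n$. As a check, $D_2=-2(1+\alpha)(1+3\alpha)$.

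Your ansatz $xh_3+yh_2h_n+z\,nh_n$ cannot satisfy the recurrence. Matching the $h_n$ terms forces $z=0$, and then the $h_n/n$ terms do not balance. Replacing $z\,nh_n$ by $w\,h_n$ works, with $w=(1+\alpha)/(1-2\alpha)$ and $y=6\alpha/(1-2\alpha)$. Then $D_1=-2$ gives $x=-3(1+\alpha)/(1-2\alpha)$, which is exactly the paper's formula obtained from Bercu's third-moment identity. With your sign-slipped forcing you would get a wrong $x$.

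At $\alpha=1/2$, the existence of the limit is fine, but its value $\gamma_{3,c}$ is only asserted. Since the coefficients above blow up there, you need an exact harmonic-number solution. For example, $u_n=\mathbb E_1[S_n^3]/h_n$ satisfies $u_{n+1}=\frac{2n+3}{2n+1}u_n+\frac{6n+1}{2n+1}$, which is solved by $u_n/(2n+1)=3\mathcal H_{2n}-\frac32\mathcal H_n-4+\frac4{2n+1}$. Then $\mathcal H_{2n}-\mathcal H_n\to\log2$ gives $\gamma_{3,c}$, and your sign check applies.
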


\begin{proof}
First let $0<\alpha<1/2$. The moments satisfy
$\mathbb{E}_q[S_n^2]=v_n^2+\mu_n^2$ and $\mathbb{E}_q[S_n^3]=\kappa_3^{(q)}(n)+3\mu_n\mathbb{E}_q[S_n^2]-2\mu_n^3$. 
Their recurrences \eqref{eq:m1-rec}--\eqref{eq:m3-rec} are
\begin{align*}
 \mu_{n+1}&=(1+\alpha/n)\mu_n,\\
 \mathbb{E}_q[S_{n+1}^2]&=(1+2\alpha/n)\mathbb{E}_q[S_n^2]+1,\\
 \mathbb{E}_q[S_{n+1}^3]&=(1+3\alpha/n)\mathbb{E}_q[S_n^3]+(3+\alpha/n)\mu_n.
\end{align*}
Substituting into $\kappa_3^{(q)}(n+1)=\mathbb{E}_q[S_{n+1}^3]-3\mu_{n+1}\mathbb{E}_q[S_{n+1}^2]+2\mu_{n+1}^3$ gives
\begin{align*}
 \kappa_3^{(q)}(n+1)
 &=(1+3\alpha/n)\{\kappa_3^{(q)}(n)+3\mu_n\mathbb{E}_q[S_n^2]-2\mu_n^3\}
 +(3+\alpha/n)\mu_n\\
 &\quad-3(1+\alpha/n)\mu_n\{(1+2\alpha/n)\mathbb{E}_q[S_n^2]+1\}
 +2(1+\alpha/n)^3\mu_n^3\\
 &=(1+3\alpha/n)\kappa_3^{(q)}(n)-\frac{2\alpha\mu_n}{n}
 -\frac{6\alpha^2\mu_n\mathbb{E}_q[S_n^2]}{n^2}
 +\left(\frac{6\alpha^2}{n^2}+\frac{2\alpha^3}{n^3}\right)\mu_n^3.
\end{align*}
Using $\mathbb{E}_q[S_n^2]=v_n^2+\mu_n^2$, we obtain
\begin{align}\label{eq:k3rec}
 \kappa_3^{(q)}(n+1)
 =\left(1+\frac{3\alpha}{n}\right)\kappa_3^{(q)}(n)
 -\frac{2\alpha\mu_n}{n}
 -\frac{6\alpha^2\mu_nv_n^2}{n^2}
 +\frac{2\alpha^3\mu_n^3}{n^3}.
\end{align}
Now $\mu_n=O_\alpha(|\beta|n^\alpha)$ and $v_n^2=O_\alpha(n)$ by \eqref{eq:m1_asym} and \eqref{vnasymp}. The first two forcing terms are therefore $O_\alpha(|\beta|n^{\alpha-1})$. The last is $O_\alpha(|\beta|^3n^{3\alpha-3})=O_\alpha(|\beta|n^{\alpha-1})$, since $|\beta|\le1$ and $3\alpha-3\le\alpha-1$. Dividing~\eqref{eq:k3rec} by $h_3(n+1)=(1+3\alpha/n)h_3(n)$ gives
\[
 \frac{\kappa_3^{(q)}(n+1)}{h_3(n+1)}
 -\frac{\kappa_3^{(q)}(n)}{h_3(n)}
 =O_\alpha\left(|\beta|\frac{n^{\alpha-1}}{h_3(n+1)}\right)
 =O_\alpha(|\beta|n^{-2\alpha-1}).
\]
Here $h_3(n+1)\asymp n^{3\alpha}$. Since $\sum_{j\ge1}j^{-2\alpha-1}<\infty$ and $|\kappa_3^{(q)}(1)|=2|\beta|(1-\beta^2)\le2|\beta|$, this proves the bound in~\eqref{eq:k3limit}.

To evaluate the limit, we use the following exact
third-moment identity, which is equivalent to
the formula obtained by Bercu~\cite{Bercu2018}:
\begin{align*}
 \mathbb{E}_q[S_n^3]=\frac{3n+1+\alpha}{1-2\alpha}\mu_n
 -\frac{3(1+\alpha)}{1-2\alpha}\beta h_3(n).
\end{align*}
Although derived there in the superdiffusive regime,
the identity also holds for $0<\alpha<1/2$. Combining it with $\mathbb{E}_q[S_n^2]=\{n-2\alpha h_2(n)\}/(1-2\alpha)$ from~\eqref{eq:meanvar}, we find
\begin{align}
 \kappa_3^{(q)}(n)
 &=\mathbb{E}_q[S_n^3]-3\mu_n\mathbb{E}_q[S_n^2]+2\mu_n^3\notag\\
 &=\frac{1+\alpha}{1-2\alpha}\mu_n
 +\frac{6\alpha}{1-2\alpha}\mu_nh_2(n)
 -\frac{3(1+\alpha)}{1-2\alpha}\beta h_3(n)+2\mu_n^3.
 \label{eq:k3exactsub}
\end{align}
Since $\mu_n=\beta h_1(n)$ and $h_j(n)/n^{j\alpha}\to1/\Gamma(1+j\alpha)$ for $j=1,2,3$,
\begin{align*}
 n^{-3\alpha}\mu_n&\longrightarrow0,&
 n^{-3\alpha}\mu_nh_2(n)&\longrightarrow
 \frac{\beta}{\Gamma(1+\alpha)\Gamma(1+2\alpha)},\\
 n^{-3\alpha}\beta h_3(n)&\longrightarrow\frac{\beta}{\Gamma(1+3\alpha)},&
 n^{-3\alpha}\mu_n^3&\longrightarrow\frac{\beta^3}{\Gamma(1+\alpha)^3}.
\end{align*}
Thus~\eqref{eq:k3exactsub} yields
\[
 n^{-3\alpha}\kappa_3^{(q)}(n)\longrightarrow
 \frac{\beta}{1-2\alpha}
 \left\{\frac{6\alpha}{\Gamma(1+\alpha)\Gamma(1+2\alpha)}
 -\frac{3(1+\alpha)}{\Gamma(1+3\alpha)}\right\}
 +\frac{2\beta^3}{\Gamma(1+\alpha)^3}
 =\gamma_3(\alpha,\beta).
\]
Each convergence is uniform for $|\beta|\le1$, proving the asserted uniformity.

To determine the sign, suppose first that $\beta>0$. Then $\mu_n>0$ and $\kappa_3^{(q)}(1)=-2\beta(1-\beta^2)\le0$. The forcing in~\eqref{eq:k3rec} is
\begin{align*}
 -\frac{2\alpha\mu_n}{n}
 -\frac{6\alpha^2\mu_nv_n^2}{n^2}
 +\frac{2\alpha^3\mu_n^3}{n^3}
 =-\frac{2\alpha\mu_n}{n}
 \left(1+\frac{3\alpha v_n^2}{n}-\frac{\alpha^2\mu_n^2}{n^2}\right)<0,
\end{align*}
because $v_n^2\ge0$, $|\mu_n|\le n$, and the expression in parentheses is at least $1-\alpha^2>0$. Iteration gives
\[
 \frac{\kappa_3^{(q)}(n)}{h_3(n)}
 =\kappa_3^{(q)}(1)
 +\sum_{j=1}^{n-1}\frac1{h_3(j+1)}
 \left\{-\frac{2\alpha\mu_j}{j}
 -\frac{6\alpha^2\mu_jv_j^2}{j^2}
 +\frac{2\alpha^3\mu_j^3}{j^3}\right\}.
\]
All denominators are positive and all summands are strictly negative. The preceding estimates show absolute convergence. If $0<\beta<1$, the initial term is strictly negative; if $\beta=1$, it is zero but the sum is strictly negative. In either case the limit is strictly negative. Since $h_3(n)/n^{3\alpha}\to1/\Gamma(1+3\alpha)>0$, it follows that $\gamma_3(\alpha,\beta)<0$. Reversing the initial sign reverses the law of $S_n$ and its third cumulant, so $\gamma_3(\alpha,\beta)>0$ for $\beta<0$.

Finally, let $\alpha=1/2$. Appendix~\ref{app:criticalmoments} gives
\begin{align}\label{eq:k3exactcritical}
 \kappa_3^{(q)}(n)
 =\mu_n\left\{3(2n+1)(\mathcal{H}_{2n}-\mathcal{H}_n)
 +\frac32\mathcal{H}_n-8n\right\}+2\mu_n^3.
\end{align}
Using $\mu_n/\sqrt n\to2\beta/\sqrt\pi$,
$\mathcal{H}_{2n}-\mathcal{H}_n\to\log2$,
and $\mathcal{H}_n/n\to0$, we obtain, uniformly for $|\beta|\le1$,
\[
 n^{-3/2}\kappa_3^{(q)}(n)
 \longrightarrow\frac{2\beta}{\sqrt\pi}
 \left(6\log2-8+\frac{8\beta^2}{\pi}\right)
 =\gamma_{3,c}(\beta).
\]
Since $6\log2-8+8/\pi<0$, we have
$\gamma_{3,c}(\beta)\ne0$ for every $\beta\ne0$.
\end{proof}

\subsection{The fourth cumulant}
The fourth cumulants for general and symmetric initial laws satisfy
\begin{align}\label{eq:k4identity}
 \kappa_4^{(q)}(n)=\kappa_4^{(1/2)}(n)
 -4\mu_n\kappa_3^{(q)}(n)+2\mu_n^4.
\end{align}
Indeed, the even raw moments are independent of $q$, and
\begin{align*}
 \kappa_4^{(q)}(n)
 &=\mathbb{E}_q[S_n^4]-4\mu_n\mathbb{E}_q[S_n^3]-3\bigl(\mathbb{E}_q[S_n^2]\bigr)^2
 +12\mu_n^2\mathbb{E}_q[S_n^2]-6\mu_n^4,\\
 \kappa_4^{(1/2)}(n)&=\mathbb{E}_{1/2}[S_n^4]-3\bigl(\mathbb{E}_{1/2}[S_n^2]\bigr)^2.
\end{align*}
Substitution of $\mathbb{E}_q[S_n^3]=\kappa_3^{(q)}(n)+3\mu_n\mathbb{E}_q[S_n^2]-2\mu_n^3$ proves~\eqref{eq:k4identity}.

For $1/4<\alpha<1/2$, define
\[
 D_\alpha\coloneqq \frac1{\Gamma(1+4\alpha)}
 \left\{\frac12+\sum_{j=1}^{\infty}
 \frac{\frac12+4\alpha^2\Theta_1(j)^2/j^2}{h_4(j+1)}\right\}.
\]
The proof below shows that this series converges and that $0<D_\alpha<\infty$.

\begin{proposition}\label{prop:fourth}
For every fixed $-1\le\alpha<1/4$,
\begin{align}\label{eq:k4slope}
 \frac{\kappa_4^{(1/2)}(n)}n
 \longrightarrow-\frac{2(1+2\alpha^2)}{(1-2\alpha)^2(1-4\alpha)}.
\end{align}
At $\alpha=1/4$, $\kappa_4^{(1/2)}(n)\sim-9n\log n$. For $1/4<\alpha<1/2$, $\kappa_4^{(1/2)}(n)\sim-12D_\alpha n^{4\alpha}$. At the critical point $\alpha=1/2$,
\begin{align}
 \kappa_4^{(1/2)}(n)
 &=-n\{3(n+1)\mathcal{H}_n^{(2)}-3\mathcal{H}_n^2-10\mathcal{H}_n+9n\}\notag\\
 &=-\left(9+\frac{\pi^2}{2}\right)n^2+O(n\mathcal{H}_n^2).
 \label{eq:k4critical}
\end{align}
For general $q\in[0,1]$, uniformly in $q$,
\[
 \kappa_4^{(q)}(n)=
 \begin{cases}
 O_\alpha(G_2(n)),&0<\alpha<1/2,\\
 O(n^2),&\alpha=1/2.
 \end{cases}
\]
For $-1\le\alpha<1/4$, the limit in~\eqref{eq:k4slope} also holds with $\kappa_4^{(q)}(n)/n$ on the left, uniformly in $q\in[0,1]$.
\end{proposition}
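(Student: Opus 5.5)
The plan rests on the identity $\kappa_4^{(1/2)}(n)=8\Theta_1(n)-12\Theta_2(n)$ from \eqref{eq:cumroots}, valid for $-1<\alpha\le1/2$ and also at $\alpha=-1$. The asymptotics of $\kappa_4^{(1/2)}$ then reduce to those of $\Theta_1$ and $\Theta_2$, which are governed by the linear recurrences \eqref{eq:theta1}--\eqref{eq:theta2}.

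\textbf{Step 1: the root power sums.} From \eqref{eq:theta1} and $\sigma_n^2=2\Theta_1(n)$, I take $\Theta_1(n)\sim n/(2(1-2\alpha))$ for $\alpha<1/2$, as in the proof of Lemma~\ref{pos:lem:rootbounds}. For $\alpha<1/4$, including negative $\alpha$, the forcing in \eqref{eq:theta2} is
\[
f_n=\tfrac12+4\alpha^2\Theta_1(n)^2/n^2\to\tfrac12+\alpha^2/(1-2\alpha)^2 .
\]
Lemma~\ref{lem:difference} then gives
\[
\Theta_2(n)/n\to\frac{\tfrac12+\alpha^2/(1-2\alpha)^2}{1-4\alpha}.
\]
I would simplify $4/(1-2\alpha)-12\,\Theta_2$-slope to obtain $-2(1+2\alpha^2)/((1-2\alpha)^2(1-4\alpha))$. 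As a check, $\alpha=0$ gives $-2$.

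\textbf{Step 2: the endpoint $\alpha=-1$ and the case $\alpha=1/4$.} At $\alpha=-1$ the recurrence \eqref{eq:rootrec} must be re-derived for the product of Lemma~\ref{lem:minus-one-product}, for $n\ge2$. Alternatively, I would use the even-moment recurrences \eqref{eq:m2-rec}, \eqref{eq:m4-rec} directly: they give a linear recurrence for $\kappa_4=\mathbb{E}S^4-3(\mathbb{E}S^2)^2$, and Lemma~\ref{lem:difference} again yields the slope. This moment route also cross-checks all $\alpha<1/4$.

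At $\alpha=1/4$ we have $g_n=n$, and the forcing tends to $\tfrac12+\tfrac{1}{16}\cdot4=\tfrac34$. Hence $\Theta_2(n)/n=\sum_{j<n}f_j/(j+1)+O(1)\sim\tfrac34\log n$. Since $\Theta_1=O(n)$, this gives $\kappa_4\sim-9n\log n$.

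\textbf{Step 3: the range $1/4<\alpha<1/2$.} The summation in Lemma~\ref{pos:lem:rootbounds} gives the exact form
\[
\Theta_2(n)/h_4(n)=\tfrac12+\sum_{j<n}f_j/h_4(j+1).
\]
This sum converges absolutely, since $f_j=O(1)$ and $h_4(j)\asymp j^{4\alpha}$ with $4\alpha>1$. Using $h_4(n)\sim n^{4\alpha}/\Gamma(1+4\alpha)$, we get $\Theta_2\sim D_\alpha n^{4\alpha}$, with $D_\alpha>0$ because every term is positive. Because $\Theta_1=O(n)=o(n^{4\alpha})$, it follows that $\kappa_4\sim-12D_\alpha n^{4\alpha}$.

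\textbf{Step 4: the critical point $\alpha=1/2$.} Here I would solve the recurrence explicitly. Dividing \eqref{eq:theta2} by $n(n+1)$ and inserting $\Theta_1=n\mathcal H_n/2$ gives
\[
\frac{\Theta_2(n+1)}{(n+1)(n+2)}-\frac{\Theta_2(n)}{n(n+1)}=\frac{1/2+\mathcal H_n^2/4}{(n+1)(n+2)}.
\]
I would sum this telescopically, using $\sum\mathcal H_j^2/((j+1)(j+2))$ identities by summation by parts, to reach the stated closed form. I expect this exact harmonic-sum bookkeeping to be the main obstacle. A fallback is to verify the claimed formula by induction: check that it satisfies the recurrence for $\kappa_4$ obtained from \eqref{eq:m2-rec} and \eqref{eq:m4-rec} together with $\sigma_n^2=n\mathcal H_n$, and check the initial value. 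The asymptotic $-(9+\pi^2/2)n^2+O(n\mathcal H_n^2)$ then follows from $\mathcal H_n^{(2)}=\pi^2/6-1/n+O(n^{-2})$.

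\textbf{Step 5: general $q$.} I would use \eqref{eq:k4identity}, $\kappa_4^{(q)}=\kappa_4^{(1/2)}-4\mu_n\kappa_3^{(q)}+2\mu_n^4$. For $0<\alpha<1/2$, Proposition~\ref{prop:third} gives $\mu_n\kappa_3=O(n^{4\alpha})$ and $\mu_n^4=O(n^{4\alpha})$, both $O(G_2(n))$, and the symmetric part is $O(G_2)$ by Lemma~\ref{pos:lem:rootbounds}. At $\alpha=1/2$, the bounds $O(n^2)$ come from Lemma~\ref{crit:lem:log}.

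For $-1\le\alpha<1/4$, the correction terms are $o(n)$ uniformly in $q$. For $\alpha\le0$ this follows from $h_n\le1$ together with Lemmas~\ref{neg:lem:log} and~\ref{lem:endpoint-log}. For $0<\alpha<1/4$ it follows from $O(n^{4\alpha})=o(n)$. This gives the uniform limit of $\kappa_4^{(q)}(n)/n$.
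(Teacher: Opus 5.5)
Your proposal is correct and follows essentially the paper's route: $\kappa_4^{(1/2)}=8\Theta_1-12\Theta_2$ with Lemma~\ref{lem:difference} for $\alpha<1/4$, the $h_4$-weighted summation for $1/4<\alpha<1/2$, the moment recurrence for $\kappa_4$ at $\alpha=-1$ and at $\alpha=1/2$, and \eqref{eq:k4identity} for general $q$. At $\alpha=1/2$, your fallback of checking the closed form against $\kappa_4^{(1/2)}(n+1)=(1+2/n)\kappa_4^{(1/2)}(n)-3\mathcal{H}_n^2-4\mathcal{H}_n-2$ with $\kappa_4^{(1/2)}(1)=-2$ is exactly what Appendix~\ref{app:criticalmoments} does. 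The remaining deviations are cosmetic and all valid: you use Lemma~\ref{lem:difference} instead of the explicit value $-2n/15$ at $\alpha=-1$, the $\Theta$-recurrence instead of \eqref{eq:independentcums} at $\alpha=0$, and Lemma~\ref{crit:lem:log} directly for $\kappa_4^{(q)}=O(n^2)$ at criticality.
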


\begin{proof}
First consider the case $q=1/2$ with $-1<\alpha<1/4$, initially excluding $\alpha=0$. By~\eqref{eq:cumroots},
\begin{align}\label{eq:k4theta}
 \kappa_4^{(1/2)}(n)=8\Theta_1(n)-12\Theta_2(n).
\end{align}
Also $\Theta_1(n)=\sigma_n^2/2=v_n^2/2$ for $q=1/2$,
so~\eqref{vnasymp} gives
\begin{align*}
 \frac{\Theta_1(n)}n\longrightarrow\frac1{2(1-2\alpha)}.
\end{align*}
Recurrence~\eqref{eq:theta2} reads
\begin{align}\label{eq:theta2again}
 \Theta_2(n+1)=\left(1+\frac{4\alpha}{n}\right)\Theta_2(n)
 +\frac12+\frac{4\alpha^2\Theta_1(n)^2}{n^2}.
\end{align}
Its forcing converges to $1/2+\alpha^2/(1-2\alpha)^2$. Since $4\alpha<1$, Lemma~\ref{lem:difference} yields
\[
 \frac{\Theta_2(n)}n\longrightarrow
 \frac{\frac12+\frac{\alpha^2}{(1-2\alpha)^2}}{1-4\alpha}
 =\frac{6\alpha^2-4\alpha+1}{2(1-2\alpha)^2(1-4\alpha)}.
\]
Substituting these limits into~\eqref{eq:k4theta}, we obtain
\[
 \frac{\kappa_4^{(1/2)}(n)}n
 \longrightarrow
 \frac4{1-2\alpha}
 -\frac{6(6\alpha^2-4\alpha+1)}{(1-2\alpha)^2(1-4\alpha)}
 =-\frac{2(1+2\alpha^2)}{(1-2\alpha)^2(1-4\alpha)},
\]
which proves~\eqref{eq:k4slope}. At $\alpha=0$, the same conclusion follows directly from~\eqref{eq:independentcums}, since $\kappa_4^{(1/2)}(n)=-2n$.

At $\alpha=-1$, we have $S_2=0$ and $h_2(n)=0$ for $n\ge3$.
Hence~\eqref{eq:meanvar} gives
$\mathbb E_{1/2}[S_n^2]=n/3$ for $n\ge3$. Recurrence~\eqref{eq:m4-rec} then gives
\[
 \kappa_4^{(1/2)}(n+1)=\left(1-\frac4n\right)\kappa_4^{(1/2)}(n)-\frac23,
 \qquad n\ge3.
\]
Taking $n=4$ gives $\kappa_4^{(1/2)}(5)=-2/3$. Induction therefore yields $\kappa_4^{(1/2)}(n)=-2n/15$ for $n\ge5$. Its slope $-2/15$ agrees with~\eqref{eq:k4slope} at $\alpha=-1$.

At $\alpha=1/4$, equation~\eqref{eq:meanvar} gives
$\Theta_1(n)=n-h_2(n)/2=n+O(n^{1/2})$.
Hence~\eqref{eq:theta2again} yields
\[
 \frac{\Theta_2(n+1)}{n+1}-\frac{\Theta_2(n)}n
 =\frac{\frac12+\frac{\Theta_1(n)^2}{4n^2}}{n+1}
 =\frac3{4(n+1)}+O(n^{-3/2}).
\]
Summing gives $\Theta_2(n)\sim(3/4)n\log n$.
Since $\Theta_1(n)=O(n)$, equation~\eqref{eq:k4theta} yields
$\kappa_4^{(1/2)}(n)\sim-9n\log n$.

For $1/4<\alpha<1/2$, divide~\eqref{eq:theta2again} by $h_4(n+1)=(1+4\alpha/n)h_4(n)$ and iterate:
\begin{align*}
 \frac{\Theta_2(n)}{h_4(n)}
 =\frac12+\sum_{j=1}^{n-1}
 \frac{\frac12+4\alpha^2\Theta_1(j)^2/j^2}{h_4(j+1)}.
\end{align*}
Here $\Theta_1(j)=\sigma_j^2/2=O_\alpha(j)$
by~\eqref{vnasymp}, so the numerator is $O_\alpha(1)$. Also $h_4(j+1)\asymp j^{4\alpha}$ and $4\alpha>1$. Thus the series converges; all summands are positive, so its limit is positive and finite. As $h_4(n)/n^{4\alpha}\to1/\Gamma(1+4\alpha)$, the definition of $D_\alpha$ gives $\Theta_2(n)/n^{4\alpha}\to D_\alpha$. Since $\Theta_1(n)=O_\alpha(n)=o(n^{4\alpha})$, equation~\eqref{eq:k4theta} yields $\kappa_4^{(1/2)}(n)\sim-12D_\alpha n^{4\alpha}$.

We next consider the case of general $q\in[0,1]$. For $0<\alpha<1/2$, Proposition~\ref{prop:third} and \eqref{eq:m1_asym} give $\kappa_3^{(q)}(n)=O_\alpha(|\beta|n^{3\alpha})$ and $\mu_n=O_\alpha(|\beta|n^\alpha)$. Hence~\eqref{eq:k4identity} implies
\begin{align}
 \kappa_4^{(q)}(n)-\kappa_4^{(1/2)}(n)
 &=-4\mu_n\kappa_3^{(q)}(n)+2\mu_n^4\notag\\
 &=O_\alpha(|\beta|^2n^{4\alpha})
 +O_\alpha(|\beta|^4n^{4\alpha})
 =O_\alpha(n^{4\alpha}).\label{eq:k4difference}
\end{align}
The estimates for $q=1/2$ obtained above
and~\eqref{eq:k4difference} imply, uniformly in $q\in[0,1]$,
\[
 \kappa_4^{(q)}(n)
 =\kappa_4^{(1/2)}(n)+O_\alpha(n^{4\alpha})
 =O_\alpha(G_2(n)).
\]
For $0<\alpha<1/4$, we also have
\[
 \frac{\kappa_4^{(q)}(n)-\kappa_4^{(1/2)}(n)}n
 =O_\alpha(n^{4\alpha-1})\longrightarrow0.
\]
Hence~\eqref{eq:k4slope} also holds with
$\kappa_4^{(q)}(n)$ in place of $\kappa_4^{(1/2)}(n)$,
uniformly in $q$.

For $-1<\alpha<0$, equation~\eqref{eq:kappa4} gives
\[
 \kappa_4^{(q)}(n)-\kappa_4^{(1/2)}(n)
 =O_p(|\beta|h_n)=o(n),
\]
since $h_n\sim n^\alpha/\Gamma(1+\alpha)$ and $\alpha<0$. At $\alpha=0$, equation~\eqref{eq:independentcums} gives a difference $8\beta^2-6\beta^4=O(1)=o(n)$. At $\alpha=-1$, the law of $S_n$ is independent of $q$ for $n\ge2$, so
$\kappa_4^{(q)}(n)=\kappa_4^{(1/2)}(n)$ for $n\ge2$. Thus, for $-1\le\alpha<1/4$, the limit in~\eqref{eq:k4slope}
also holds with $\kappa_4^{(q)}(n)$ in place of
$\kappa_4^{(1/2)}(n)$, uniformly in $q\in[0,1]$.

Finally, let $\alpha=1/2$. The first equality
in~\eqref{eq:k4critical} follows from
\eqref{eq:crit-kappa4} in Appendix~\ref{app:criticalmoments}.
Since $\mathcal{H}_n^{(2)}=\pi^2/6+O(n^{-1})$ and
$\mathcal{H}_n=O(\log n)$, we obtain
\[
 \kappa_4^{(1/2)}(n)
 =-3n(n+1)\mathcal{H}_n^{(2)}-9n^2
 +O(n\mathcal{H}_n^2)
 =-\left(9+\frac{\pi^2}{2}\right)n^2
 +O(n\mathcal{H}_n^2),
\]
which proves the second equality in~\eqref{eq:k4critical}. Proposition~\ref{prop:third} gives $\kappa_3^{(q)}(n)=O(n^{3/2})$, while $\mu_n=O(\sqrt n)$ by \eqref{eq:m1_asym}. Therefore~\eqref{eq:k4identity} gives
\[
 \kappa_4^{(q)}(n)-\kappa_4^{(1/2)}(n)
 =-4\mu_n\kappa_3^{(q)}(n)+2\mu_n^4=O(n^2).
\]
Together with~\eqref{eq:k4critical}, this proves $\kappa_4^{(q)}(n)=O(n^2)$.
\end{proof}

\section{Local Edgeworth expansions and optimal rates of local CLT}\label{sec:inversion}
In this section, we establish the local Edgeworth expansion
via Fourier inversion and use it to derive the optimal rates
of local Gaussian approximation under exact and asymptotic
normalization. We also explain the mechanisms behind
the resulting phase transitions.

We combine the analytic and Fourier estimates in the preceding sections to prove Theorem~\ref{thm:edgeworth}.

\begin{proof}[Proof of Theorem~\ref{thm:edgeworth}]
Set
\[
 f_n(\xi)=e^{-\mathrm{i}\mu_n\xi/v_n}\chi_n^{(q)}(\xi/v_n),
\]
and set
\[
 a_n^*=\frac{\kappa_3^{(q)}(n)}{6v_n^3},\qquad
 b_n^*=\frac{\kappa_4^{(q)}(n)}{24v_n^4}.
\]
Fix a sufficiently large constant $M>0$, and set
\begin{align*}
 T_n=\begin{cases}\sqrt{M\log n},&\alpha<1/2,\\
 \sqrt{M\log\mathcal{H}_n},&\alpha=1/2.
 \end{cases}
\end{align*}

We first check that the logarithmic expansions of Section~\ref{sec:analytic} apply on $|\xi|\le T_n$. If $-1<\alpha<0$, then $v_n^2\asymp n$ by \eqref{vnasymp}, so
\[
 \frac{T_n}{v_n}=O_p\left(\sqrt{\frac{\log n}{n}}\right)\longrightarrow0.
\]
Thus $|\mathrm{i}\xi/v_n|\le T_n/v_n\le R/2$ for all sufficiently large $n$, and Lemma~\ref{neg:lem:log} applies. For $\alpha\in\{-1,0\}$, the expansion~\eqref{eq:nonposlogsummary}
also applies at $z=\mathrm{i}\xi/v_n$ for $|\xi|\le T_n$
and all sufficiently large $n$.

For $0<\alpha<1/2$, the radius is $R_n=c_\alpha G_2(n)^{-1/4}$. Since $v_n^2\asymp n$ by \eqref{vnasymp},
\[
 \frac{T_n}{v_nR_n}=O_\alpha\left(\sqrt{\log n}\,n^{-1/2}G_2(n)^{1/4}\right).
\]
Here
\[
 G_2(n)=\begin{cases}n,&0<\alpha<1/4,\\
 n\log n,&\alpha=1/4,\\n^{4\alpha},&1/4<\alpha<1/2.
 \end{cases}
\]
Each case gives $T_n/(v_nR_n)\to0$, hence $|\xi|/v_n\le R_n/2$ for sufficiently large $n$. At $\alpha=1/2$, the radius $R_n=c/\sqrt n$ and
the estimate $v_n^2\asymp n\mathcal{H}_n$
from~\eqref{vnasymp} give
\[
 \frac{T_n}{v_nR_n}
 =O\left(\sqrt{\frac{\log\mathcal{H}_n}{\mathcal{H}_n}}\right)
 \longrightarrow0.
\]
Consequently, in every parameter regime the logarithmic expansion applies on $|\xi|\le T_n$, uniformly in $q$:
\begin{align}\label{eq:scaledlog}
 \log f_n(\xi)=-\frac{\xi^2}{2}+a_n^*(\mathrm{i}\xi)^3+b_n^*(\mathrm{i}\xi)^4+W_n(\xi),
\end{align}
where $W_n(\xi)$ denotes the remainder term. Indeed,
\[
 \log f_n(\xi)=-\frac{\mathrm{i}\mu_n\xi}{v_n}+\log M_n^{(q)}(\mathrm{i}\xi/v_n);
\]
the linear terms cancel, and the quadratic term is $(v_n^2/2)(\mathrm{i}\xi/v_n)^2=-\xi^2/2$.

The remainder estimates of Section~\ref{sec:analytic}, together with~\eqref{vnasymp}, which gives
$v_n^2\asymp n$ for $0\le p<3/4$
and $v_n^2\asymp n\mathcal{H}_n$ for $p=3/4$, imply
\begin{align}\label{eq:Wbounds}
 |W_n(\xi)|\le C_p\begin{cases}
 n^{-2}|\xi|^6+|\beta|n^{-5/2}|\xi|^5,&-1\le\alpha\le0,\\
 G_3(n)n^{-3}|\xi|^6+|\beta|n^{\alpha-5/2}G_2(n)|\xi|^5,&0<\alpha<1/2,\\
 \mathcal{H}_n^{-3}|\xi|^6+|\beta|\mathcal{H}_n^{-5/2}|\xi|^5,&\alpha=1/2.
 \end{cases}
\end{align}
Propositions~\ref{prop:third} and~\ref{prop:fourth},
together with Lemma~\ref{neg:lem:log},
\eqref{eq:independentcums}, and \eqref{eq:endpoint}, give
\begin{align}
 |a_n^*|&\le C_p\begin{cases}
 |\beta|n^{-3/2},&-1\le\alpha\le0,\\
 |\beta|n^{3\alpha-3/2},&0<\alpha<1/2,\\
 |\beta|\mathcal{H}_n^{-3/2},&\alpha=1/2,
 \end{cases}\label{eq:anbounds}\\
 |b_n^*|&\le C_p\begin{cases}
 n^{-1},&-1\le\alpha\le0,\\
 G_2(n)n^{-2},&0<\alpha<1/2,\\
 \mathcal{H}_n^{-2},&\alpha=1/2.
 \end{cases}\label{eq:bnbounds}
\end{align}
These estimates yield
\begin{align}\label{eq:correctionsmall}
 \sup_{|\xi|\le T_n}\bigl(|a_n^*||\xi|^3+|b_n^*||\xi|^4+|W_n(\xi)|\bigr)\longrightarrow0
\end{align}
uniformly in $q$.

We also have
\begin{align}\label{eq:squares}
 |a_n^*|^2+|b_n^*|^2\le C_p\mathcal R_n(\alpha,\beta).
\end{align}
Only $0<\alpha<1/2$ requires further verification. First,
\[
 |a_n^*|^2\le C_\alpha|\beta|^2n^{6\alpha-3}
 \le C_\alpha n^{6\alpha-3}=O_\alpha(G_3(n)n^{-3}).
\]
Next, $|b_n^*|^2\le C_\alpha G_2(n)^2/n^4$. For $0<\alpha<1/4$,
\[
 \frac{G_2(n)^2}{n^4}=n^{-2}=O_\alpha(G_3(n)n^{-3}).
\]
At $\alpha=1/4$ this ratio is
\[
 n^{-2}(\log n)^2=o(n^{-3/2})=o(G_3(n)n^{-3}),
\]
whereas for $1/4<\alpha<1/2$,
\[
 \frac{G_2(n)^2/n^4}{G_3(n)n^{-3}}=n^{2\alpha-1}\longrightarrow0.
\]
This proves \eqref{eq:squares}; in the remaining regimes it follows directly from \eqref{eq:anbounds}--\eqref{eq:bnbounds}.

By~\eqref{eq:correctionsmall}, for all sufficiently large $n$,
\[
 |a_n^*||\xi|^3+|b_n^*||\xi|^4+|W_n(\xi)|\le1,
 \qquad |\xi|\le T_n.
\] For complex $A,B,W$ with $|A|+|B|+|W|\le1$, Taylor's formula gives
\begin{align*}
 |e^{A+B+W}-1-A-B|\le C\bigl(|W|+|A|^2+|B|^2\bigr).
\end{align*}
Thus \eqref{eq:scaledlog} implies
\[
 \left|f_n(\xi)-e^{-\xi^2/2}\{1+a_n^*(\mathrm{i}\xi)^3+b_n^*(\mathrm{i}\xi)^4\}\right|
 \le Ce^{-\xi^2/2}\bigl\{|W_n(\xi)|+|a_n^*|^2|\xi|^6+|b_n^*|^2|\xi|^8\bigr\}.
\]
Using \eqref{eq:Wbounds} and \eqref{eq:squares}, we obtain, for $|\xi|\le T_n$,
\begin{align}\label{ineq-Gauss}
 \left|f_n(\xi)-e^{-\xi^2/2}\{1+a_n^*(\mathrm{i}\xi)^3+b_n^*(\mathrm{i}\xi)^4\}\right|
 \le C_p\mathcal R_n(\alpha,\beta)(|\xi|^5+|\xi|^6+|\xi|^8)e^{-\xi^2/2}.
\end{align}
The right-hand side is integrable on $\R$, so
\begin{align}\label{eq:centralintegral}
 \int_{-T_n}^{T_n}\left|f_n(\xi)-e^{-\xi^2/2}\{1+a_n^*(\mathrm{i}\xi)^3+b_n^*(\mathrm{i}\xi)^4\}\right|\,d\xi
 =O_p\bigl(\mathcal R_n(\alpha,\beta)\bigr).
\end{align}

We next estimate the Fourier integral over
$T_n<|\xi|\le\pi v_n/2$. For $-1<\alpha<1/2$, Lemma~\ref{lem:fourier} gives $|\chi_n^{(q)}(\theta)|\le3e^{-c_pn\sin^2\theta}$ on $|\theta|\le\pi/2$. Since $\sin^2 y\ge4y^2/\pi^2$ on this interval and $v_n^2\asymp n$ by \eqref{vnasymp},
\begin{align*}
 |f_n(\xi)|=|\chi_n^{(q)}(\xi/v_n)|
 \le3e^{-c_pn\sin^2(\xi/v_n)}\le3e^{-c_p'\xi^2},
 \qquad |\xi|\le\pi v_n/2.
\end{align*}
For $\alpha=-1$, we have $\mu_n=0$ and hence
$v_n^2=\sigma_n^2$. Thus Lemma~\ref{lem:antifourier}
gives
\[
 |f_n(\xi)|\le e^{-2\xi^2/\pi^2},
 \qquad |\xi|\le\pi v_n/2,
\]
uniformly in $q\in[0,1]$. Consequently,
\[
 \int_{T_n<|\xi|\le\pi v_n/2}|f_n(\xi)|\,d\xi
 \le C_p\int_{|\xi|>T_n}e^{-c_p'\xi^2}\,d\xi.
\]
Choosing $M$ sufficiently large in $T_n^2=M\log n$ yields
\begin{align}\label{eq:subcriticaltail}
 \int_{T_n<|\xi|\le\pi v_n/2}|f_n(\xi)|\,d\xi
 =O_p(n^{-3})=O_p\bigl(\mathcal R_n(\alpha,\beta)\bigr).
\end{align}

At $\alpha=1/2$, let $\theta_n=\arcsin\sqrt{\eta\mathcal{H}_n/n}$, as in Lemma~\ref{crit:lem:outer}. Lemma~\ref{crit:lem:window} gives $|f_n(\xi)|\le Ce^{-c\xi^2}$ for $|\xi|\le v_n\theta_n$. A sufficiently large $M$ therefore gives
\begin{align}\label{eq:criticalmiddletail}
 \int_{T_n<|\xi|\le v_n\theta_n}|f_n(\xi)|\,d\xi=O(\mathcal{H}_n^{-4}).
\end{align}
This interval is well defined for large $n$, since $(v_n\theta_n)^2\asymp\mathcal{H}_n^2$ whereas $T_n^2=M\log\mathcal{H}_n$, so $T_n<v_n\theta_n$. Lemma~\ref{crit:lem:outer} supplies $d>0$ such that
\[
 \int_{v_n\theta_n\le|\xi|\le\pi v_n/2}|f_n(\xi)|\,d\xi=O(n^{-d}).
\]
As $n^{-d}=o(\mathcal{H}_n^{-m})$ for every fixed $m>0$, this implies
\begin{align}\label{eq:criticaloutertail}
 \int_{v_n\theta_n\le|\xi|\le\pi v_n/2}|f_n(\xi)|\,d\xi=O(\mathcal{H}_n^{-4}).
\end{align}
Both critical tail bounds are $O(\mathcal R_n(\alpha,\beta))$.

The approximating Fourier transform has a negligible tail as well. Indeed, \eqref{eq:anbounds}--\eqref{eq:bnbounds} bound $a_n^*$ and $b_n^*$ uniformly, and for every fixed nonnegative integer $k$,
\[
 \int_{|\xi|>T_n}|\xi|^k e^{-\xi^2/2}\,d\xi\le C_k e^{-T_n^2/4}.
\]
Increasing $M$ if necessary gives
\begin{align}\label{eq:gaussiantail}
 \int_{|\xi|>T_n}e^{-\xi^2/2}|1+a_n^*(\mathrm{i}\xi)^3+b_n^*(\mathrm{i}\xi)^4|\,d\xi
 =O_p\bigl(\mathcal R_n(\alpha,\beta)\bigr).
\end{align}

Finally, put $\xi=v_n\theta$ and $x=(s-\mu_n)/v_n$ in \eqref{eq:inversion}. For $s\equiv n\pmod2$,
\begin{align*}
 \frac{v_n}{2}\mathbb{P}_q(S_n=s)=\frac1{2\pi}
 \int_{-\pi v_n/2}^{\pi v_n/2}f_n(\xi)e^{-\mathrm{i} x\xi}\,d\xi.
\end{align*}
Since $|e^{-\mathrm{i} x\xi}|=1$, the difference between this integral and the approximating integral over $\R$ is bounded by
\begin{align*}
 &\left|\int_{-\pi v_n/2}^{\pi v_n/2}f_n(\xi)e^{-\mathrm{i} x\xi}\,d\xi
 -\int_\R e^{-\xi^2/2}\{1+a_n^*(\mathrm{i}\xi)^3+b_n^*(\mathrm{i}\xi)^4\}e^{-\mathrm{i} x\xi}\,d\xi\right|\\
 &\quad\le\int_{|\xi|\le T_n}\left|f_n(\xi)-e^{-\xi^2/2}\{1+a_n^*(\mathrm{i}\xi)^3+b_n^*(\mathrm{i}\xi)^4\}\right|\,d\xi\\
 &\qquad+\int_{T_n<|\xi|\le\pi v_n/2}|f_n(\xi)|\,d\xi
 +\int_{|\xi|>T_n}e^{-\xi^2/2}|1+a_n^*(\mathrm{i}\xi)^3+b_n^*(\mathrm{i}\xi)^4|\,d\xi\\
 &\quad=O_p\bigl(\mathcal R_n(\alpha,\beta)\bigr).
\end{align*}
Here we used \eqref{eq:centralintegral} and
\eqref{eq:gaussiantail}, together with
\eqref{eq:subcriticaltail} for $0\le p<3/4$
and \eqref{eq:criticalmiddletail}--\eqref{eq:criticaloutertail}
for $p=3/4$. Hence, uniformly over the admissible lattice,
\begin{align}\label{eq:edgeworth-fourier}
 \frac{v_n}{2}\mathbb{P}_q(S_n=s)=\frac1{2\pi}\int_\R e^{-\xi^2/2}
 \{1+a_n^*(\mathrm{i}\xi)^3+b_n^*(\mathrm{i}\xi)^4\}e^{-\mathrm{i} x\xi}\,d\xi
 +O_p\bigl(\mathcal R_n(\alpha,\beta)\bigr).
\end{align}
Differentiating the Gaussian Fourier transform $j$ times under the integral, for every nonnegative integer $j$, gives
\begin{align*}
 \frac1{2\pi}\int_\R(\mathrm{i}\xi)^j e^{-\xi^2/2}e^{-\mathrm{i} x\xi}\,d\xi
 =(-1)^j\varphi^{(j)}(x)=\varphi(x)H_j(x).
\end{align*}
Applying this identity for $j=0,3,4$ in \eqref{eq:edgeworth-fourier} yields
\[
 \frac{v_n}{2}\mathbb{P}_q(S_n=s)=\varphi(x)\left\{1+
 \frac{\kappa_3^{(q)}(n)}{6v_n^3}H_3(x)+\frac{\kappa_4^{(q)}(n)}{24v_n^4}H_4(x)\right\}
 +O_p\bigl(\mathcal R_n(\alpha,\beta)\bigr),
\]
which is \eqref{eq:edgeworth}. The modulus-one Fourier multiplier makes the error uniform in $s\equiv n\pmod2$.

Moreover, \eqref{eq:anbounds} and \eqref{eq:bnbounds} imply
\[
 \sup_{q\in[0,1]}\left|\frac{\kappa_3^{(q)}(n)}{v_n^3}\right|\longrightarrow0,
 \qquad\sup_{q\in[0,1]}\left|\frac{\kappa_4^{(q)}(n)}{v_n^4}\right|\longrightarrow0.
\]
Also $\sup_{q\in[0,1]}\mathcal R_n(\alpha,\beta)\to0$, since $|\beta|\le1$. As $\varphi H_3$ and $\varphi H_4$ are bounded on $\R$, \eqref{eq:edgeworth} gives
\begin{align*}
 \sup_{q\in[0,1]}\sup_{\substack{s\in\Z\\s\equiv n\!\!\!\pmod2}}
 \left|\frac{v_n}{2}\mathbb{P}_q(S_n=s)-\varphi\left(\frac{s-\mu_n}{v_n}\right)\right|\longrightarrow0.
\end{align*}
This is the local CLT, uniform over the entire admissible lattice and all initial probabilities $q\in[0,1]$.
\end{proof}

We record the constants used below. For $0\le p<5/8$, put
\begin{align}\label{eq:Cp}
 C(p)=\frac{1+2\alpha^2}{4(1-4\alpha)\sqrt{2\pi}},\qquad \alpha=2p-1.
\end{align}
For $0<\alpha<1/2$, set
\[
 a_3(\alpha,\beta)=\frac{(1-2\alpha)^{3/2}}6\gamma_3(\alpha,\beta),
\]
where $\gamma_3$ is given in Proposition~\ref{prop:third}.

To obtain the optimal estimates, we use the following elementary lemma on maxima over the standardized lattice.
\begin{lemma}\label{lem:profiles}
Let $g:\R\to\R$ be continuous with $g(x)\to0$ as $|x|\to\infty$, and let $\delta_n>0$ tend to zero. Then, for every real sequence $(a_n)$,
\begin{align}\label{eq:latticeprofile}
 \sup_{x\in a_n+\delta_n\Z}|g(x)|\longrightarrow\sup_{x\in\R}|g(x)|,
\end{align}
uniformly in $a_n$. In particular,
\begin{align}\label{eq:profilemax}
 \sup_{x\in\R}|\varphi(x)H_4(x)|=\frac3{\sqrt{2\pi}}.
\end{align}
\end{lemma}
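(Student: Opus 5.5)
The lattice statement \eqref{eq:latticeprofile} follows from two inequalities: a trivial upper bound and a lower bound obtained by approximating a maximizer of $|g|$ by lattice points. Put $S=\sup_{x\in\R}|g(x)|$. The inequality $\sup_{x\in a_n+\delta_n\Z}|g(x)|\le S$ holds for every $n$ and every $a_n$, since the lattice is a subset of $\R$. If $S=0$ there is nothing more to prove. Otherwise, since $g$ is continuous and $g(x)\to0$ as $|x|\to\infty$, the supremum is attained at some $x^*$. Indeed, choose $K$ with $|g(x)|<S/2$ for $|x|>K$; then $S=\max_{[-K,K]}|g|$ by compactness.

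For the lower bound, fix $\varepsilon>0$. Continuity gives $r>0$ with $|g(x)|\ge S-\varepsilon$ whenever $|x-x^*|\le r$. Every lattice $a_n+\delta_n\Z$ contains a point within distance $\delta_n/2$ of $x^*$, namely $a_n+\delta_n\lfloor (x^*-a_n)/\delta_n+1/2\rfloor$. Once $\delta_n\le 2r$, which holds for all large $n$ independently of $a_n$, this point lies in $[x^*-r,x^*+r]$. Hence
\[
S-\varepsilon\le\sup_{x\in a_n+\delta_n\Z}|g(x)|\le S .
\]
The threshold on $n$ depends only on $\varepsilon$ through $r$, and not on $a_n$, so the convergence is uniform in $a_n$.

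For \eqref{eq:profilemax}, let $h(x)=(x^4-6x^2+3)e^{-x^2/2}$, so that $\varphi H_4=h/\sqrt{2\pi}$. Since $\varphi H_4=\varphi^{(4)}$, its derivative is $\varphi^{(5)}=-\varphi H_5$ with $H_5(x)=x^5-10x^3+15x$. The critical points are therefore $x=0$ and the roots of $x^4-10x^2+15=0$, that is, $x^2=5\pm\sqrt{10}$. At $x=0$ we get $h(0)=3$. At the other critical points, write $y=x^2$. The identity $y^2=10y-15$ gives $y^2-6y+3=4y-12$, so $h=(4y-12)e^{-y/2}$.
\begin{itemize}
\item For $y=5-\sqrt{10}\approx1.84$: $|4y-12|\approx4.65$ and $e^{-y/2}\approx0.40$, so $|h|\approx1.85$.
\item For $y=5+\sqrt{10}\approx8.16$: $4y-12\approx20.6$ and $e^{-y/2}\approx0.0169$, so $|h|\approx0.35$.
\end{itemize}
Both values are below $3$, and $h\to0$ at infinity. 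Hence $\sup|h|=3$, attained at $x=0$, and $\sup_{x\in\R}|\varphi(x)H_4(x)|=3/\sqrt{2\pi}$.

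The only step needing care is this numerical comparison, and a crude bound suffices. For example, $|4y-12|e^{-y/2}\le 4.7$ at the first point requires only $e^{-0.92}<0.64$. No delicate obstacle is expected; the uniformity in $a_n$ rests entirely on the fact that the distance from $x^*$ to the lattice is at most $\delta_n/2$.
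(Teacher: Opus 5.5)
Your proof is correct and follows the paper's argument essentially step for step: the trivial upper bound, a lattice point within $\delta_n/2$ of a maximizer of $|g|$, and the stationary-point analysis via $(\varphi H_4)'=-\varphi H_5$, where $H_4=4(y-3)$ at $y=x^2=5\pm\sqrt{10}$. Using continuity at the fixed maximizer instead of the paper's uniform continuity is a harmless simplification. In your closing remark you presumably mean $|4y-12|\le 4.7$ (not $|4y-12|e^{-y/2}\le 4.7$), so that $|h|<3$ needs only $e^{-y/2}<3/4.7$.
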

\begin{proof}
The lattice supremum never exceeds the supremum over $\R$. The assertion is immediate if $g\equiv0$. Otherwise, decay at infinity implies that $|g|$ attains its maximum at some $x_0$. Every lattice $a_n+\delta_n\Z$ contains a point $x_n$ with $|x_n-x_0|\le\delta_n/2$. Since $g$ is uniformly continuous, for each $\varepsilon>0$ there is an $\eta>0$, independent of $a_n$, such that $|g(x)-g(y)|<\varepsilon$ whenever $|x-y|<\eta$. For $\delta_n/2<\eta$,
\[
 \sup_{x\in a_n+\delta_n\Z}|g(x)|\ge |g(x_n)|
 \ge |g(x_0)|-\varepsilon=\sup_{x\in\R}|g(x)|-\varepsilon.
\]
This proves~\eqref{eq:latticeprofile}, including uniformity under translation.

For~\eqref{eq:profilemax}, use $(\varphi H_j)'=-\varphi H_{j+1}$. Since $H_5(x)=x(x^4-10x^2+15)$, the stationary points of $\varphi H_4$ satisfy $x=0$ or $x^2=5\pm\sqrt{10}$. At zero its absolute value is $3/\sqrt{2\pi}$. At a nonzero stationary point, writing $y=x^2$ gives $y^2-10y+15=0$, hence $H_4(x)=y^2-6y+3=4(y-3)$. The corresponding value, multiplied by $\sqrt{2\pi}$, is $4|y-3|e^{-y/2}<3$ for both $y=5\pm\sqrt{10}$. Since $\varphi H_4$ tends to zero at infinity, its maximum absolute value is as stated.
\end{proof}

We now prove Theorem~\ref{thm:localrate}.
\begin{proof}[Proof of Theorem~\ref{thm:localrate}]
For $q=1/2$, we have $\beta=0$, $\mu_n=0$, $v_n=\sigma_n$, and $\kappa_3^{(1/2)}(n)=0$. Theorem~\ref{thm:edgeworth} gives, uniformly in $s\equiv n\pmod2$,
\[
 \frac{\sigma_n}{2}\mathbb{P}_{1/2}(S_n=s)
 =\varphi(s/\sigma_n)\left\{1+\frac{\kappa_4^{(1/2)}(n)}{24\sigma_n^4}H_4(s/\sigma_n)\right\}
 +O_p(\mathcal R_n(\alpha,0)).
\]
Consequently,
\begin{align}\label{eq:Esymprofile}
 E_n(p,1/2)=\frac{|\kappa_4^{(1/2)}(n)|}{24\sigma_n^4}
 \sup_{\substack{s\in\Z\\ s\equiv n\pmod 2}}|\varphi(s/\sigma_n)H_4(s/\sigma_n)|
 +O_p(\mathcal R_n(\alpha,0)).
\end{align}
Proposition~\ref{prop:fourth} and~\eqref{vnasymp} imply
\begin{align}\label{eq:standardk4}
 \frac{\kappa_4^{(1/2)}(n)}{\sigma_n^4}\sim
 \begin{cases}
 -\dfrac{2(1+2\alpha^2)}{1-4\alpha}n^{-1},&-1\le\alpha<1/4,\\[2mm]
 -\dfrac94\dfrac{\log n}{n},&\alpha=1/4,\\[2mm]
 -12D_\alpha(1-2\alpha)^2 n^{4\alpha-2},&1/4<\alpha<1/2,\\[2mm]
 -\left(9+\dfrac{\pi^2}{2}\right)\mathcal{H}_n^{-2},&\alpha=1/2.
 \end{cases}
\end{align}
In every case,
\begin{align}\label{eq:symremainder}
 \mathcal R_n(\alpha,0)=o\left(\frac{|\kappa_4^{(1/2)}(n)|}{\sigma_n^4}\right).
\end{align}
Indeed, for $-1\le\alpha<1/6$ the remainder is $n^{-2}=o(n^{-1})$; at $\alpha=1/6$ it is $n^{-2}\log n=o(n^{-1})$; and for $1/6<\alpha<1/4$ it is $n^{6\alpha-3}=o(n^{-1})$, since $6\alpha-3<-1$. At $\alpha=1/4$ it is $n^{-3/2}=o(n^{-1}\log n)$. For $1/4<\alpha<1/2$, its ratio to $n^{4\alpha-2}$ is $n^{2\alpha-1}\to0$. Finally, at criticality its ratio to $\mathcal{H}_n^{-2}$ is $\mathcal{H}_n^{-1}\to0$.

The lattice $\{s/\sigma_n:s\equiv n\pmod2\}$ has mesh $2/\sigma_n\to0$. Applying Lemma~\ref{lem:profiles} to $g=\varphi H_4$ in~\eqref{eq:Esymprofile} and using~\eqref{eq:symremainder} yields
\begin{align*}
 E_n(p,1/2)\sim\frac1{8\sqrt{2\pi}}\frac{|\kappa_4^{(1/2)}(n)|}{\sigma_n^4}.
\end{align*}
Substitution of~\eqref{eq:standardk4} gives all four rates, since $4\alpha-2=8p-6$. 
More precisely, the positive finite limits are
\begin{align*}
 nE_n(p,1/2)&\longrightarrow C(p)&& (0\le p<5/8),\\
 \frac{n}{\log n}E_n(5/8,1/2)&\longrightarrow\frac9{32\sqrt{2\pi}},\\
 n^{2-4\alpha}E_n(p,1/2)&\longrightarrow\frac{3D_\alpha(1-2\alpha)^2}{2\sqrt{2\pi}}
 &&(5/8<p<3/4),\\
 (\log n)^2E_n(3/4,1/2)&\longrightarrow\frac{9+\pi^2/2}{8\sqrt{2\pi}}.
\end{align*}

Next, suppose that $q\neq1/2$. We first consider the case $-1\le\alpha\le0$. Lemma~\ref{neg:lem:log}, \eqref{eq:independentcums}, and \eqref{eq:endpoint}, together with $v_n^2\asymp n$, give $\kappa_3^{(q)}(n)/(6v_n^3)=O_p(|\beta|n^{-3/2})=o_p(n^{-1})$, uniformly in $q$. Proposition~\ref{prop:fourth} and $v_n^2\sim n/(1-2\alpha)$ give
\begin{align}\label{eq:k4lowstandard}
 \frac{\kappa_4^{(q)}(n)}{24v_n^4}
 =-\frac{1+2\alpha^2}{12(1-4\alpha)}n^{-1}+o_p(n^{-1}),
 \qquad -1\le\alpha<1/4,
\end{align}
also uniformly in $q$. Since $\mathcal R_n(\alpha,\beta)=O(n^{-2})$, Theorem~\ref{thm:edgeworth} yields
\[
 E_n(p,q)=\frac{1+2\alpha^2}{12(1-4\alpha)}n^{-1}
 \sup_{\substack{s\in\Z\\ s\equiv n\pmod 2}}\left|\varphi\left(\frac{s-\mu_n}{v_n}\right)H_4\left(\frac{s-\mu_n}{v_n}\right)\right|+o_p(n^{-1}).
\]
The mesh $2/v_n$ tends to zero uniformly in $q$, and Lemma~\ref{lem:profiles} is uniform under translation. The supremum therefore tends uniformly to $3/\sqrt{2\pi}$, proving $nE_n(p,q)\to C(p)$ uniformly in $q$ in this range.

Now let $0<\alpha<1/2$ and fix $\beta\ne0$. Proposition~\ref{prop:third} and~\eqref{eq:meanvar} imply
\begin{align}\label{eq:a3asym}
 \frac{\kappa_3^{(q)}(n)}{6v_n^3}
 =a_3(\alpha,\beta)n^{3\alpha-3/2}+o(n^{3\alpha-3/2}).
\end{align}
For $0<\alpha<1/6$, this is $o(n^{-1})$, whereas~\eqref{eq:k4lowstandard} still holds. Moreover,
\[
 G_3(n)n^{-3}=n^{-2}=o(n^{-1}),\qquad
 n^{\alpha-5/2}G_2(n)=n^{\alpha-3/2}=o(n^{-1}).
\]
Thus the $H_4$-term in Theorem~\ref{thm:edgeworth} is the leading term, and Lemma~\ref{lem:profiles} again gives $nE_n(p,q)\to C(p)$.

At $\alpha=1/6$, the third-cumulant coefficient is $a_3(1/6,\beta)n^{-1}+o(n^{-1})$, while the fourth-cumulant coefficient is $-19n^{-1}/72+o(n^{-1})$, since
\[
 -\frac{1+2(1/6)^2}{12\{1-4(1/6)\}}=-\frac{19}{72}.
\]
The remainder terms are $n^{-2}\log n$ and $n^{-4/3}$, both $o(n^{-1})$. Theorem~\ref{thm:edgeworth} and Lemma~\ref{lem:profiles} therefore give
\begin{align*}
 nE_n(7/12,q)\longrightarrow
 \sup_{x\in\R}\left|\varphi(x)\left\{a_3(1/6,\beta)H_3(x)-\frac{19}{72}H_4(x)\right\}\right|>0.
\end{align*}
The limit is positive because the even part of the profile is the nonzero function $-19\varphi H_4/72$.

For $1/6<\alpha<1/2$, the $H_3$-term in Theorem~\ref{thm:edgeworth}, whose coefficient is given by~\eqref{eq:a3asym}, is the leading term. To check that the $H_4$-term is negligible, divide its respective orders $n^{-1}$, $n^{-1}\log n$, and $n^{4\alpha-2}$ by $n^{3\alpha-3/2}$. The ratios are
\[
 \begin{cases}
 n^{1/2-3\alpha},&1/6<\alpha<1/4,\\
 n^{-1/4}\log n,&\alpha=1/4,\\
 n^{\alpha-1/2},&1/4<\alpha<1/2,
 \end{cases}
\]
and all tend to zero. Likewise, $G_3(n)n^{-3}=n^{6\alpha-3}$ has ratio $n^{3\alpha-3/2}\to0$ to the leading order. The ratios of $n^{\alpha-5/2}G_2(n)$ to that order are, respectively,
\[
 n^{-2\alpha},\qquad n^{-1/2}\log n,\qquad n^{2\alpha-1},
\]
in the same three ranges. Thus $\mathcal R_n(\alpha,\beta)=o(n^{3\alpha-3/2})$.
The leading profile is $a_3(\alpha,\beta)\varphi H_3$ and the lattice mesh tends to zero, so
\begin{align*}
 n^{3/2-3\alpha}E_n(p,q)\longrightarrow |a_3(\alpha,\beta)|\sup_{x\in\R}|\varphi(x)H_3(x)|>0.
\end{align*}
Positivity follows from Proposition~\ref{prop:third}; the exponent is $3\alpha-3/2=6p-9/2$.

Finally, at $\alpha=1/2$, Proposition~\ref{prop:third} and $v_n^2\sim n\mathcal{H}_n$ give
\[
 \frac{\kappa_3^{(q)}(n)}{6v_n^3}
 =\frac{\gamma_{3,c}(\beta)}6\mathcal{H}_n^{-3/2}+o(\mathcal{H}_n^{-3/2}).
\]
Proposition~\ref{prop:fourth} gives $\kappa_4^{(q)}(n)/(24v_n^4)=O(\mathcal{H}_n^{-2})$, and the remainder in Theorem~\ref{thm:edgeworth} is $O(\mathcal{H}_n^{-5/2})$. Thus both the $H_4$-term and the remainder are $o(\mathcal{H}_n^{-3/2})$. Lemma~\ref{lem:profiles} applied to $\varphi H_3$ yields
\begin{align*}
 \mathcal{H}_n^{3/2}E_n(3/4,q)\longrightarrow \frac{|\gamma_{3,c}(\beta)|}{6}\sup_{x\in\R}|\varphi(x)H_3(x)|>0,
\end{align*}
where nonvanishing again follows from Proposition~\ref{prop:third}.
\end{proof}
The following remark explains the mechanisms behind the transition points in Theorem~\ref{thm:localrate}.
\begin{remark}\label{rem:local-transitions}
The different transition points arise from the
third- and fourth-cumulant corrections.
When $q=1/2$, the third cumulant vanishes,
and \eqref{eq:standardk4} shows that the
fourth-cumulant correction changes order at
$\alpha=1/4$ ($p=5/8$), with a logarithmic
factor at the boundary.

For $0<\alpha<1/2$ and fixed $q\ne1/2$, the third-cumulant
correction is of order $n^{3\alpha-3/2}$
by \eqref{eq:a3asym}. Its order matches
the fourth-cumulant correction of order
$n^{-1}$ when
\[
3\alpha-\frac32=-1
\quad\Longleftrightarrow\quad
\alpha=\frac16
\quad\Longleftrightarrow\quad
p=\frac7{12}.
\]
Beyond this point, the third-cumulant
correction dominates.
\end{remark}

We now prove Corollary~\ref{cor:usual-local}.
\begin{proof}[Proof of Corollary~\ref{cor:usual-local}]
For $0\le p<3/4$, put $\alpha=2p-1$, $c_p=1-2\alpha=3-4p$, and $w_n=\sqrt{n/c_p}$. Write $x=s/w_n$, $\delta_n=\mu_n/w_n$, and $r_n=v_n/w_n$, so that $(s-\mu_n)/v_n=(x-\delta_n)/r_n$.
Uniformly in $x\in\R$, as $\delta\to0$ and $r\to1$,
\begin{align}\label{eq:gaussian-perturbation}
 \frac1r\varphi\left(\frac{x-\delta}{r}\right)
 =\varphi(x)+\delta\varphi(x)H_1(x)+(r-1)\varphi(x)H_2(x)
 +O(\delta^2+(r-1)^2).
\end{align}
Indeed, the derivatives with respect to $\delta$ and $r$ at $(0,1)$ are $\varphi H_1$ and $\varphi H_2$, respectively. All second partial derivatives are uniformly bounded in $x$ when $(\delta,r)$ lies in a fixed neighborhood of $(0,1)$, so Taylor's theorem applies uniformly.
By definition,
\begin{align}\label{eq:usual-local-decomposition}
 \frac{\sqrt n}{2}\mathbb{P}_q(S_n=s)-\sqrt{c_p}\varphi(x)
 &=\frac{\sqrt{c_p}}{r_n}\left\{\frac{v_n}{2}\mathbb{P}_q(S_n=s)-\varphi\left(\frac{x-\delta_n}{r_n}\right)\right\}\\
 &\quad+\sqrt{c_p}\left\{\frac1{r_n}\varphi\left(\frac{x-\delta_n}{r_n}\right)-\varphi(x)\right\}.
\notag\end{align}
Also,~\eqref{eq:meanvar} gives
\begin{align}\label{eq:rn-square}
 r_n^2-1=-2\alpha\frac{h_2(n)}n-c_p\beta^2\frac{h_n^2}n.
\end{align}

First suppose $q=1/2$, so $\beta=\delta_n=0$.
For $-1<\alpha<0$,~\eqref{eq:rn-square} gives $r_n-1=o(n^{-1})$: the finite product $h_2(n)=0$ for $n\ge2$ when $\alpha=-1/2$, and otherwise $h_2(n)=O(n^{2\alpha})$.
At $\alpha=0$, $r_n=1$; for $\alpha=-1$, the moment formula $v_n^2=n/3$ for $n\ge3$, obtained in the proof of Proposition~\ref{prop:fourth}, gives $r_n=1$. By Theorem~\ref{thm:edgeworth} and Proposition~\ref{prop:fourth}, uniformly in $s\equiv n\pmod2$,
\begin{align*}
 \frac{v_n}{2}\mathbb{P}_{1/2}(S_n=s)-\varphi(s/v_n)
 =-\frac{1+2\alpha^2}{12(1-4\alpha)}n^{-1}
 \varphi(s/v_n)H_4(s/v_n)+o(n^{-1}).
\end{align*}
Consequently,~\eqref{eq:gaussian-perturbation} and~\eqref{eq:usual-local-decomposition} imply
\begin{align*}
 \frac{\sqrt n}{2}\mathbb{P}_{1/2}(S_n=s)-\sqrt{c_p}\varphi(x)
 =-\sqrt{c_p}\frac{1+2\alpha^2}{12(1-4\alpha)}n^{-1}\varphi(x)H_4(x)+o(n^{-1}).
\end{align*}
The replacement of $s/v_n=x/r_n$ by $x$ in the profile is uniform: every polynomial times $\varphi$ is uniformly continuous and vanishes at infinity, and $r_n\to1$. Since the $x$-lattice has mesh $2/w_n\to0$, Lemma~\ref{lem:profiles} gives
\begin{align*}
 n\widetilde E_n(p,1/2)\longrightarrow\sqrt{c_p}\,C(p)>0,
 \qquad 0\le p\le1/2.
\end{align*}

For $0<\alpha<1/2$, we have $h_2(n)\sim n^{2\alpha}/\Gamma(1+2\alpha)$, and hence
\begin{align}\label{eq:usual-local-r-sym-positive}
 r_n-1\sim-\frac{\alpha}{\Gamma(1+2\alpha)}n^{2\alpha-1}.
\end{align}
By Theorem~\ref{thm:edgeworth} and Proposition~\ref{prop:fourth}, the first term on the right-hand side of~\eqref{eq:usual-local-decomposition} has orders $n^{-1}$ for $0<\alpha<1/4$, $n^{-1}\log n$ at $\alpha=1/4$, and $n^{4\alpha-2}$ for $1/4<\alpha<1/2$, and is therefore $o(n^{2\alpha-1})$. It follows from~\eqref{eq:gaussian-perturbation},~\eqref{eq:usual-local-decomposition}, and~\eqref{eq:usual-local-r-sym-positive} that
\[
 \frac{\sqrt n}{2}\mathbb{P}_{1/2}(S_n=s)-\sqrt{c_p}\varphi(x)
 =-\frac{\sqrt{c_p}\alpha}{\Gamma(1+2\alpha)}n^{2\alpha-1}\varphi(x)H_2(x)+o(n^{2\alpha-1}),
\]
uniformly on the lattice. Lemma~\ref{lem:profiles} and the identity $\sup_x|\varphi(x)H_2(x)|=\varphi(0)=1/\sqrt{2\pi}$ give
\begin{align*}
 n^{1-2\alpha}\widetilde E_n(p,1/2)
 \longrightarrow\frac{\sqrt{c_p}\alpha}{\Gamma(1+2\alpha)\sqrt{2\pi}}>0,
 \qquad 0<\alpha<1/2.
\end{align*}
Substitution of $\alpha=2p-1$ gives the symmetric subcritical conclusion.

Now fix $q\ne1/2$. For $-1\le\alpha<-1/2$, both $\delta_n$ and $r_n-1$ are $o(n^{-1})$. Then, the second term in~\eqref{eq:usual-local-decomposition} is $o(n^{-1})$ uniformly on the lattice. By Theorem~\ref{thm:edgeworth}, Lemma~\ref{neg:lem:log}, \eqref{eq:endpoint},
and Proposition~\ref{prop:fourth}, uniformly in $s\equiv n\pmod2$,
\[
 \frac{v_n}{2}\mathbb{P}_q(S_n=s)-\varphi\left(\frac{s-\mu_n}{v_n}\right)
 =-\frac{1+2\alpha^2}{12(1-4\alpha)}n^{-1}
 \varphi\left(\frac{s-\mu_n}{v_n}\right)
 H_4\left(\frac{s-\mu_n}{v_n}\right)+o(n^{-1}).
\] Applying Lemma~\ref{lem:profiles} in the same way yields
\begin{align*}
 n\widetilde E_n(p,q)\longrightarrow\sqrt{c_p}\,C(p)>0,
 \qquad 0\le p<1/4.
\end{align*}

At $\alpha=-1/2$, we have $h_n\sim1/\sqrt{\pi n}$ and $w_n=\sqrt{n/2}$, so
\begin{align}\label{eq:usual-local-delta-boundary}
 \delta_n\sim\beta\sqrt{2/\pi}\,n^{-1}.
\end{align}
Since $h_2(n)=0$ for $n\ge2$, we have $r_n-1=O(n^{-2})$. Also $(1+2\alpha^2)/(12(1-4\alpha))=1/24$. Thus, by~\eqref{eq:gaussian-perturbation}, Theorem~\ref{thm:edgeworth},
Lemma~\ref{neg:lem:log}, and Proposition~\ref{prop:fourth},
\[
 \frac{\sqrt n}{2}\mathbb{P}_q(S_n=s)-\sqrt2\varphi(x)
 =\frac1n\varphi(x)\left\{\frac{2\beta}{\sqrt\pi}H_1(x)-\frac1{12\sqrt2}H_4(x)\right\}+o(n^{-1})
\]
uniformly. Applying Lemma~\ref{lem:profiles} in the same way yields
\begin{align*}
 n\widetilde E_n(1/4,q)\longrightarrow
 \sup_{x\in\R}\left|\varphi(x)\left\{\frac{2\beta}{\sqrt\pi}H_1(x)-\frac1{12\sqrt2}H_4(x)\right\}\right|>0.
\end{align*}
The even part ensures positivity.

For $-1/2<\alpha<1/2$, the relation $h_n\sim n^\alpha/\Gamma(1+\alpha)$ gives
\begin{align}\label{eq:usual-local-delta-asym}
 \delta_n\sim\frac{\beta\sqrt{c_p}}{\Gamma(1+\alpha)}n^{\alpha-1/2}.
\end{align}
Equation~\eqref{eq:rn-square} yields $r_n-1=O(n^{2\alpha-1})=o(n^{\alpha-1/2})$. It remains to verify that the first term on the right-hand side of~\eqref{eq:usual-local-decomposition} is $o(n^{\alpha-1/2})$. For $\alpha\le0$, Theorem~\ref{thm:edgeworth}, Lemma~\ref{neg:lem:log},
and \eqref{eq:independentcums} show that the first term in
\eqref{eq:usual-local-decomposition} is
$O(n^{-1})=o(n^{\alpha-1/2})$. For $\alpha>0$, Proposition~\ref{prop:third} and $v_n^2\sim n/c_p$
show that the $H_3$-term in Theorem~\ref{thm:edgeworth} is
$O(n^{3\alpha-3/2})=o(n^{\alpha-1/2})$.
Proposition~\ref{prop:fourth} and Theorem~\ref{thm:edgeworth} show that
the $H_4$-term and the remainder are also $o(n^{\alpha-1/2})$.
Thus~\eqref{eq:gaussian-perturbation},~\eqref{eq:usual-local-decomposition}, and~\eqref{eq:usual-local-delta-asym} give
\[
 \frac{\sqrt n}{2}\mathbb{P}_q(S_n=s)-\sqrt{c_p}\varphi(x)
 =\frac{\beta c_p}{\Gamma(1+\alpha)}n^{\alpha-1/2}\varphi(x)H_1(x)+o(n^{\alpha-1/2}),
\]
uniformly. Lemma~\ref{lem:profiles} and the identity $\sup_x|\varphi(x)H_1(x)|=\varphi(1)=1/\sqrt{2\pi e}$ give
\begin{align*}
 n^{1/2-\alpha}\widetilde E_n(p,q)
 \longrightarrow\frac{|\beta|c_p}{\Gamma(1+\alpha)\sqrt{2\pi e}}>0,
 \qquad -1/2<\alpha<1/2.
\end{align*}
Substituting $\alpha=2p-1$ gives the asserted exponent.

Finally, at $p=3/4$, put $w_n=\sqrt{n\log n}$, $x=s/w_n$, $\delta_n=\mu_n/w_n$, and $r_n=v_n/w_n$. Since $h_n\sim2\sqrt{n/\pi}$ and $\mathcal{H}_n=\log n+\gamma_{\rm E}+o(1)$, where $\gamma_{\rm E}$ is Euler's constant,
\begin{align}
 \delta_n&=\frac{2\beta}{\sqrt{\pi\log n}}+o((\log n)^{-1/2}),\label{eq:usual-local-critical-delta}\\
 r_n^2&=1+\frac{\gamma_{\rm E}-4\beta^2/\pi}{\log n}+o((\log n)^{-1}).\label{eq:usual-local-critical-r}
\end{align}
For $q=1/2$, we have $\delta_n=0$ and $r_n-1=\gamma_{\rm E}/(2\log n)+o((\log n)^{-1})$. By Theorem~\ref{thm:edgeworth} and Proposition~\ref{prop:fourth}, the first term in the critical analogue of~\eqref{eq:usual-local-decomposition} is $O(\mathcal H_n^{-2})$. Thus, the critical analogue of~\eqref{eq:usual-local-decomposition}
and~\eqref{eq:gaussian-perturbation} give, uniformly,
\[
 \frac{\sqrt{n\log n}}2\mathbb{P}_{1/2}(S_n=s)-\varphi(x)
 =\frac{\gamma_{\rm E}}{2\log n}\varphi(x)H_2(x)+o((\log n)^{-1}).
\]
Therefore, Lemma~\ref{lem:profiles} gives
\begin{align*}
 (\log n)\widetilde E_n(3/4,1/2)\longrightarrow\frac{\gamma_{\rm E}}{2\sqrt{2\pi}}>0.
\end{align*}
For fixed $q\ne1/2$, the mean correction in~\eqref{eq:usual-local-critical-delta} dominates the scale correction in~\eqref{eq:usual-local-critical-r} and the first term in the critical analogue of~\eqref{eq:usual-local-decomposition}, which is $O(\mathcal H_n^{-3/2})$ by Theorem~\ref{thm:edgeworth} and Propositions~\ref{prop:third} and~\ref{prop:fourth}. Hence, uniformly,
\[
 \frac{\sqrt{n\log n}}2\mathbb{P}_q(S_n=s)-\varphi(x)
 =\frac{2\beta}{\sqrt{\pi\log n}}\varphi(x)H_1(x)+o((\log n)^{-1/2}).
\]
Using Lemma~\ref{lem:profiles} and the maximum of $|\varphi H_1|$ once more gives
\begin{align*}
 \sqrt{\log n}\,\widetilde E_n(3/4,q)\longrightarrow\frac{\sqrt2\,|\beta|}{\pi\sqrt e}>0.
\end{align*}
The proof is complete.
\end{proof}
The following remark explains the mechanisms behind the transition points in Corollary~\ref{cor:usual-local}.
\begin{remark}\label{rem:usual-local-transitions}
As seen from \eqref{eq:gaussian-perturbation} and
\eqref{eq:usual-local-decomposition}, the transition points
in Corollary~\ref{cor:usual-local} arise from the competition
between the error under exact standardization ($E_n(p,q)$),
the mean correction ($\delta_n$), and the
standard-deviation correction ($r_n-1$).

When $q=1/2$, the mean correction vanishes.
By \eqref{eq:usual-local-r-sym-positive}, the
standard-deviation correction has order $n^{2\alpha-1}$
and dominates the error under exact standardization
for $0<\alpha<1/2$, leading to the transition at $p=1/2$.

For fixed $q\ne1/2$, Theorem~\ref{thm:localrate}
shows that the error under exact standardization
is of order $n^{-1}$ near $\alpha=-1/2$,
while \eqref{eq:usual-local-delta-asym} shows that
the mean correction has order $n^{\alpha-1/2}$
for $-1/2<\alpha<1/2$.
These orders meet at
\[
\alpha-\frac12=-1
\quad\Longleftrightarrow\quad
p=\frac14.
\]
At $\alpha=-1/2$, \eqref{eq:usual-local-delta-boundary}
confirms that the mean correction is of order $n^{-1}$.
Beyond this point, the mean correction dominates
both the error under exact standardization and
the standard-deviation correction.
\end{remark}

\section{Edgeworth expansions and optimal rates of CLT}\label{sec:kolmogorov}
We derive the optimal rates of convergence in the central limit theorem
under exact centering and variance normalization from the local
Edgeworth analysis, and compare their transitions with those
of local approximation.

We now prove Theorem~\ref{lem:kolmogorov-edgeworth}.

\begin{proof}[Proof of Theorem~\ref{lem:kolmogorov-edgeworth}]
As in the proof of Theorem~\ref{thm:edgeworth}, let $f_n(\xi)=\mathbb{E}_qe^{\mathrm{i}\xi Z_n}$.
It follows from \eqref{ineq-Gauss} that
\begin{align}
 &\int_{|\xi|\le T_n}\frac1{|\xi|}
 \left|f_n(\xi)-e^{-\xi^2/2}\left\{1+
 \frac{\kappa_3^{(q)}(n)}{6v_n^3}(\mathrm{i}\xi)^3+
 \frac{\kappa_4^{(q)}(n)}{24v_n^4}(\mathrm{i}\xi)^4\right\}\right|\,d\xi\notag\\
 &\qquad=O_p\bigl(\mathcal R_n(\alpha,\beta)\bigr).
 \label{eq:kol-small-integral}
\end{align}

By \eqref{eq:gaussiantail}, \eqref{eq:subcriticaltail}, \eqref{eq:criticalmiddletail}, and \eqref{eq:criticaloutertail}, together with $T_n\to\infty$, we obtain
\begin{align}
 &\int_{T_n<|\xi|\le\pi v_n/2}\frac1{|\xi|}
 \left|f_n(\xi)-e^{-\xi^2/2}\left\{1+
 \frac{\kappa_3^{(q)}(n)}{6v_n^3}(\mathrm{i}\xi)^3+
 \frac{\kappa_4^{(q)}(n)}{24v_n^4}(\mathrm{i}\xi)^4\right\}\right|\,d\xi\notag\\
 &\qquad=O_p\bigl(\mathcal R_n(\alpha,\beta)\bigr).
 \label{eq:kol-outer-integral}
\end{align}
Define the integrable signed density
\[
 g_n(x)=\varphi(x)\left\{1+\frac{\kappa_3^{(q)}(n)}{6v_n^3}H_3(x)
 +\frac{\kappa_4^{(q)}(n)}{24v_n^4}H_4(x)\right\}.
\]
Since
\[
\int_{\mathbb R}\varphi(x)H_j(x)\,dx=0,\qquad j\ge1,
\]
the function $g_n$ has total mass $1$. By repeated integration by parts,
\[
\int_{\mathbb R}e^{\mathrm{i}\xi x}\varphi(x)H_j(x)\,dx
=(\mathrm{i}\xi)^j e^{-\xi^2/2}.
\]
Hence the Fourier transform of $g_n$ is
\[
 e^{-\xi^2/2}\left\{1+\frac{\kappa_3^{(q)}(n)}{6v_n^3}(\mathrm{i}\xi)^3
 +\frac{\kappa_4^{(q)}(n)}{24v_n^4}(\mathrm{i}\xi)^4\right\}.
\]
Let
\[
G_n(x)\coloneqq \int_{-\infty}^x g_n(y)\,dy.
\]
Since $g_n$ has total mass $1$,
\[
G_n(-\infty)=0,\qquad G_n(+\infty)=1,
\]
and $G_n'(x)=g_n(x)$. Moreover, for every finite partition $x_0<\cdots<x_m$,
we have
\[
\sum_{k=1}^m
|G_n(x_k)-G_n(x_{k-1})|
\le
\int_{\mathbb R}|g_n(y)|\,dy.
\]
By \eqref{eq:anbounds} and \eqref{eq:bnbounds},
\[
\sup_{q\in[0,1]}\int_{\mathbb R}|g_n(y)|\,dy
\]
is uniformly bounded for all sufficiently large $n$, since
$\varphi|H_3|$ and $\varphi|H_4|$ are integrable. Hence $G_n$ has uniformly bounded total variation. Furthermore, since $\varphi H_3$ and $\varphi H_4$ are bounded, \eqref{eq:anbounds} and \eqref{eq:bnbounds} also imply that
\[
\sup_x|G_n'(x)|=\sup_x|g_n(x)|
\]
is uniformly bounded for all sufficiently large $n$ and $q\in[0,1]$.

Hence the bounded-variation form of Esseen's smoothing inequality %
\cite[p.~109]{Petrov1975}, with $b=1$, gives, for every $T>0$,
\[
\sup_x\left|\mathbb P_q(Z_n\le x)-G_n(x)\right|
\le
\int_{-T}^T
\left|
\frac{f_n(\xi)-\widehat g_n(\xi)}{\xi}
\right|\,d\xi
+
\frac{C_p}{T},
\]
where
\[
\widehat g_n(\xi)
=
e^{-\xi^2/2}\left\{
1+
\frac{\kappa_3^{(q)}(n)}{6v_n^3}(\mathrm{i}\xi)^3+
\frac{\kappa_4^{(q)}(n)}{24v_n^4}(\mathrm{i}\xi)^4
\right\}.
\]
Taking $T=\pi v_n/2$ and using \eqref{eq:kol-small-integral} and \eqref{eq:kol-outer-integral}, we obtain
\[
\sup_x\left|\mathbb P_q(Z_n\le x)-G_n(x)\right|
=
O_p\left(v_n^{-1}+\mathcal R_n(\alpha,\beta)\right).
\]
Finally, $(\varphi H_j)'=-\varphi H_{j+1}$, so
\[
 G_n(x)=\Phi(x)-\frac{\kappa_3^{(q)}(n)}{6v_n^3}\varphi(x)H_2(x)
 -\frac{\kappa_4^{(q)}(n)}{24v_n^4}\varphi(x)H_3(x).
\]
This proves \eqref{eq:kolmogorov-edgeworth}.
\end{proof}
We now prove Theorem~\ref{thm:kolmogorov}.
\begin{proof}[Proof of Theorem~\ref{thm:kolmogorov}]
We first establish the lattice lower bound. Chebyshev's inequality gives
\[
 \mathbb{P}_q(|S_n-\mu_n|\le2v_n)\ge\frac34.
\]
For sufficiently large $n$, the interval $[\mu_n-2v_n,\mu_n+2v_n]$ contains at most $4v_n$ points of the span-$2$ lattice. Therefore
\begin{align}\label{eq:maxatom}
 \max_{\substack{s\in\Z\\s\equiv n\pmod2}}\mathbb{P}_q(S_n=s)\ge\frac3{16v_n}.
\end{align}
A distribution function with an atom of mass $m$ is at Kolmogorov distance at least $m/2$ from every continuous distribution function. Hence
\begin{align*}
 D_n(p,q)\ge\frac3{32v_n}.
\end{align*}
Since $v_n\asymp\sqrt n$ when $p<3/4$, this implies
\begin{align}\label{eq:kol-lattice-n}
 D_n(p,q)\ge c_{p}n^{-1/2}.
\end{align}

Suppose first that $q=1/2$. The third cumulant vanishes, and
Proposition~\ref{prop:fourth} gives
\begin{align}\label{eq:kol-k4-orders}
 \frac{|\kappa_4^{(1/2)}(n)|}{v_n^4}\asymp
 \begin{cases}
 n^{-1},&0\le p<5/8,\\
 (\log n)n^{-1},&p=5/8,\\
 n^{8p-6},&5/8<p<3/4,\\
 \mathcal{H}_n^{-2},&p=3/4.
 \end{cases}
\end{align}
By \eqref{eq:Rscale},
\begin{align*}
 \mathcal R_n(\alpha,0)=
 \begin{cases}
 o(n^{-1/2}),&0\le p\le11/16,\\
 o(n^{8p-6}),&11/16<p<3/4,\\
 O(\mathcal{H}_n^{-3}),&p=3/4.
 \end{cases}
\end{align*}
For $p<3/4$, Theorem~\ref{lem:kolmogorov-edgeworth} therefore gives
\[
 D_n(p,1/2)=O_p\left(n^{-1/2}+\frac{|\kappa_4^{(1/2)}(n)|}{v_n^4}\right).
\]
The two orders $n^{8p-6}$ and $n^{-1/2}$ coincide exactly when $8p-6=-1/2$, or $p=11/16$. Thus this upper bound and \eqref{eq:kol-lattice-n} show that $D_n(p,1/2)\asymp n^{-1/2}$ for $0\le p\le11/16$.

For $11/16<p<3/4$, apply Theorem~\ref{lem:kolmogorov-edgeworth} at $x=1$. As $H_3(1)=-2$,
\[
 \mathbb{P}_{1/2}(Z_n\le1)-\Phi(1)
 =\frac{\kappa_4^{(1/2)}(n)}{12v_n^4}\varphi(1)
 +O_p\left(v_n^{-1}+\mathcal R_n(\alpha,0)\right).
\]
In this range,
\[
 v_n^{-1}+\mathcal R_n(\alpha,0)
 =o\left(\frac{|\kappa_4^{(1/2)}(n)|}{v_n^4}\right).
\]
Moreover, Proposition~\ref{prop:fourth} yields
\[
 \frac{\kappa_4^{(1/2)}(n)}{v_n^4}
 \sim-12D_\alpha(1-2\alpha)^2n^{4\alpha-2}.
\]
The coefficient is nonzero, so $D_n(p,1/2)\ge c_pn^{4\alpha-2}=c_pn^{8p-6}$. Combined with the upper bound, this proves the claimed order in this range.

At $p=3/4$, Proposition~\ref{prop:fourth} gives
\[
 \frac{\kappa_4^{(1/2)}(n)}{v_n^4}
 \sim
 -\left(9+\frac{\pi^2}{2}\right)\mathcal{H}_n^{-2}.
\]
Moreover, by \eqref{eq:meanvar} and \eqref{eq:Rscale},
\[
 v_n^{-1}
 =O((n\mathcal{H}_n)^{-1/2})
 =o(\mathcal{H}_n^{-2}),
 \qquad
 \mathcal R_n(1/2,0)
 =O(\mathcal{H}_n^{-3}).
\]
Applying Theorem~\ref{lem:kolmogorov-edgeworth} again at $x=1$
therefore yields
\[
 D_n(p,1/2)\asymp\mathcal{H}_n^{-2}.
\]
Altogether,
\begin{align*}
 D_n(p,1/2)\asymp
 \begin{cases}
 n^{-1/2},&0\le p\le11/16,\\
 n^{8p-6},&11/16<p<3/4,\\
 \mathcal{H}_n^{-2},&p=3/4.
 \end{cases}
\end{align*}

Now fix $q\ne1/2$. For $0<\alpha<1/2$, Proposition~\ref{prop:third} gives
\begin{align}\label{eq:kol-k3-asymp}
 \frac{\kappa_3^{(q)}(n)}{6v_n^3}\sim a_3(\alpha,\beta)n^{3\alpha-3/2},
 \qquad a_3(\alpha,\beta)\ne0.
\end{align}
For $p\le1/2$, we instead have $\kappa_3^{(q)}(n)/v_n^3=O_p(n^{-3/2})$ by Lemma \ref{neg:lem:log}, \eqref{eq:independentcums}, and \eqref{eq:endpoint}. Proposition \ref{prop:fourth} gives
\begin{align*}
 \frac{\kappa_4^{(q)}(n)}{v_n^4}=\begin{cases}
 O_p(n^{-1}),&-1\le\alpha<1/4,\\
 O_p((\log n)n^{-1}),&\alpha=1/4,\\
 O_p(n^{4\alpha-2}),&1/4<\alpha<1/2,\\
 O(\mathcal{H}_n^{-2}),&\alpha=1/2.
 \end{cases}
\end{align*}
Moreover, \eqref{eq:Rscale} gives
\begin{align*}
 \mathcal R_n(\alpha,\beta)=
 \begin{cases}
 o(n^{-1/2}),&0\le p\le2/3,\\
 o(n^{6p-9/2}),&2/3<p<3/4.
 \end{cases}
\end{align*}
For $0\le p\le2/3$, both standardized cumulant corrections are $O_p(n^{-1/2})$.
Thus Theorem~\ref{lem:kolmogorov-edgeworth} gives $D_n(p,q)=O_{p,q}(n^{-1/2})$ for $0\le p\le2/3$, and \eqref{eq:kol-lattice-n} supplies the matching lower bound.

For $2/3<p<3/4$, evaluate the same theorem at $x=0$.
In this range, the fourth-cumulant correction is $O_p(n^{4\alpha-2})=o(n^{3\alpha-3/2})$, and $v_n^{-1}+\mathcal R_n(\alpha,\beta)=o(n^{3\alpha-3/2})$.
Since $H_2(0)=-1$ and $H_3(0)=0$,
\[
 \mathbb{P}_q(Z_n\le0)-\frac12=\frac{\kappa_3^{(q)}(n)}{6v_n^3}\varphi(0)
 +O_{p,q}\left(v_n^{-1}+\mathcal R_n(\alpha,\beta)\right).
\]
Equation~\eqref{eq:kol-k3-asymp} and its nonzero coefficient yield
$D_n(p,q)\ge c_{p,q}n^{3\alpha-3/2}=c_{p,q}n^{6p-9/2}$. The upper bound from Theorem~\ref{lem:kolmogorov-edgeworth} has the same order.

Finally, at $p=3/4$, Proposition~\ref{prop:third} gives
\[
 \frac{\kappa_3^{(q)}(n)}{6v_n^3}
 \sim
 \frac{\gamma_{3,c}(\beta)}6\mathcal{H}_n^{-3/2},
\]
where $\gamma_{3,c}(\beta)\ne0$ since $q\ne1/2$. Moreover,
\[
 v_n^{-1}=o(\mathcal{H}_n^{-3/2}),\qquad
 \frac{\kappa_4^{(q)}(n)}{v_n^4}=O(\mathcal{H}_n^{-2}),\qquad
 \mathcal R_n(1/2,\beta)=O(\mathcal{H}_n^{-5/2}).
\]
Thus the upper estimate and evaluation at $x=0$ give
\[
 D_n(p,q)\asymp\mathcal{H}_n^{-3/2}.
\]
Altogether,
\begin{align*}
 D_n(p,q)\asymp
 \begin{cases}
 n^{-1/2},&0\le p\le2/3,\\
 n^{6p-9/2},&2/3<p<3/4,\\
 \mathcal{H}_n^{-3/2},&p=3/4.
 \end{cases}
\end{align*}
Since $\mathcal{H}_n\sim\log n$, these are precisely the rates in
Theorem~\ref{thm:kolmogorov}.
\end{proof}
The following remark explains the mechanisms behind
the transition points in Theorem~\ref{thm:kolmogorov}.

\begin{remark}\label{rem:kolmogorov-transitions}
For $p<3/4$, the transition points arise from the competition
between the lattice effect, reflected in the lower bound
\eqref{eq:kol-lattice-n}, and the third- and fourth-cumulant
corrections.

When $q=1/2$, the third cumulant vanishes,
and \eqref{eq:kol-k4-orders} shows that the
fourth-cumulant correction has order
$n^{8p-6}$ for $5/8<p<3/4$.
Its order matches the lattice error when
\[
8p-6=-\frac12
\quad\Longleftrightarrow\quad
p=\frac{11}{16}.
\]
Beyond this point, the fourth-cumulant correction dominates.

For fixed $q\ne1/2$, \eqref{eq:kol-k3-asymp}
shows that the third-cumulant correction
has order $n^{6p-9/2}$ for $1/2<p<3/4$.
Its order matches the lattice error when
\[
6p-\frac92=-\frac12
\quad\Longleftrightarrow\quad
p=\frac23.
\]
Beyond this point, the third-cumulant correction dominates.
\end{remark}

We now prove Corollary~\ref{cor:usual-kolmogorov}.
\begin{proof}[Proof of Corollary~\ref{cor:usual-kolmogorov}]
First let $0\le p<3/4$, and put
\[
 \alpha=2p-1,\qquad c_p=1-2\alpha=3-4p,\qquad w_n=\sqrt{n/c_p}.
\]
Write

\[
 \mathscr F_n(x)=\mathbb{P}_q\left(\frac{S_n-\mu_n}{v_n}\le x\right),\qquad
 D_n(p,q)=\sup_x|\mathscr F_n(x)-\Phi(x)|,
\]
and set $\delta_n=\mu_n/w_n$ and $r_n=v_n/w_n$. A change of variable in the definition of $\widetilde D_n$ gives
\[
 \widetilde D_n(p,q)=\sup_x\left|\mathscr F_n\left(\frac{x-\delta_n}{r_n}\right)-\Phi(x)\right|.
\]
It follows that
\begin{align}\label{eq:kolm-comparison}
 \left|\widetilde D_n(p,q)-\sup_x\left|\Phi\left(\frac{x-\delta_n}{r_n}\right)-\Phi(x)\right|\right|
 \le D_n(p,q).
\end{align}
As $\delta\to0$ and $r\to1$, Taylor's formula at $(\delta,r)=(0,1)$ gives
\begin{align}\label{eq:cdf-gaussian-perturbation}
 \Phi\left(\frac{x-\delta}{r}\right)-\Phi(x)
 =-\delta\varphi(x)-(r-1)x\varphi(x)
 +O\bigl((|\delta|+|r-1|)^2\bigr),
\end{align}
uniformly in $x\in\R$. The remainder is uniform in $x$, since the second derivatives are bounded uniformly in $x$ when $r$ stays in a fixed neighborhood of $1$. Moreover, \eqref{eq:meanvar} implies
\begin{align}\label{eq:usual-kolm-r}
 r_n^2-1=-2\alpha\frac{h_2(n)}n-c_p\beta^2\frac{h_n^2}n.
\end{align}

Suppose first that $q=1/2$, so $\beta=\delta_n=0$. For $1/2<p<3/4$, the asymptotic formula
\begin{align}
 h_2(n)\sim\frac{n^{2\alpha}}{\Gamma(1+2\alpha)}
\end{align}
and \eqref{eq:usual-kolm-r} yield
\begin{align}\label{eq:usual-kolm-r-sym}
 r_n-1\sim
 -\frac{\alpha}{\Gamma(1+2\alpha)}n^{2\alpha-1}.
\end{align} For $1/2<p\le5/8$, equivalently $0<\alpha\le1/4$,
$r_n-1=O(n^{2\alpha-1})=O(n^{-1/2})$. 
For $0\le p\le1/2$, \eqref{eq:usual-kolm-r} gives $r_n-1=O_p(n^{-1})$. 
Since, by Theorem \ref{thm:kolmogorov}, $D_n(p,1/2)=O(n^{-1/2})$ for $0\le p\le5/8$, \eqref{eq:kolm-comparison}--\eqref{eq:cdf-gaussian-perturbation} give the same upper bound for $\widetilde D_n$.

For every discrete random variable $Y$ and continuous distribution function $G$,
\[
 \sup_x|\mathbb{P}(Y\le x)-G(x)|\ge\frac12\sup_y\mathbb{P}(Y=y).
\]
Corollary \ref{cor:usual-local} gives $\sup_{s\equiv n\!\pmod2}\mathbb{P}_{1/2}(S_n=s)\asymp n^{-1/2}$ below criticality. Thus
\begin{align*}
 \widetilde D_n(p,1/2)\asymp n^{-1/2},\qquad 0\le p\le5/8.
\end{align*}

If $p>5/8$, or $\alpha>1/4$, the exact-normalization bounds give $D_n(p,1/2)=o(n^{2\alpha-1})$ by Theorem \ref{thm:kolmogorov}: for $\alpha\le3/8$ this follows from $O(n^{-1/2})$, and for $\alpha>3/8$ from $O(n^{4\alpha-2})$. Consequently, \eqref{eq:kolm-comparison}, \eqref{eq:cdf-gaussian-perturbation}, and \eqref{eq:usual-kolm-r-sym} imply
\[
 \widetilde D_n(p,1/2)\sim\frac{\alpha}{\Gamma(1+2\alpha)}n^{2\alpha-1}\sup_x|x\varphi(x)|.
\]
Substitution of $\alpha=2p-1$ gives the symmetric subcritical conclusion.

Now fix $q\ne1/2$. For $-1<\alpha<1/2$, $h_n\sim n^\alpha/\Gamma(1+\alpha)$ gives
\begin{align}\label{eq:usual-kolm-delta}
 \delta_n\sim\frac{\beta\sqrt{c_p}}{\Gamma(1+\alpha)}n^{\alpha-1/2}.
\end{align}
At the endpoint $\alpha=-1$, we instead have $h_n=0$ and hence $\delta_n=0$ for $n\ge2$. Equation~\eqref{eq:usual-kolm-r} also gives
\begin{align*}
 r_n-1=O(n^{2\alpha-1}).
\end{align*}
Here the finite-product definition covers exceptional values at which
$h_2(n)$ eventually vanishes. In particular, for $0\le p\le1/2$,
\[
 \delta_n=O(n^{-1/2}),\qquad
 r_n-1=O(n^{-1}),
\]
while Theorem~\ref{thm:kolmogorov} gives
\[
 D_n(p,q)=O(n^{-1/2}).
\]
The comparison yields
\[
 \widetilde D_n(p,q)=O(n^{-1/2}).
\]
On the other hand, Corollary~\ref{cor:usual-local} and the atom bound give
\[
 \widetilde D_n(p,q)\ge c_p'n^{-1/2}.
\]
Therefore
\[
 \widetilde D_n(p,q)\asymp n^{-1/2},
 \qquad 0\le p\le1/2.
\]

For $1/2<p<3/4$, we have
\[
 r_n-1=o(\delta_n).
\]
Moreover, Theorem~\ref{thm:kolmogorov} gives
\[
 D_n(p,q)=o(n^{\alpha-1/2}).
\]
Hence \eqref{eq:kolm-comparison},
\eqref{eq:cdf-gaussian-perturbation}, and
\eqref{eq:usual-kolm-delta} yield
\[
 \widetilde D_n(p,q)\asymp n^{\alpha-1/2},
 \qquad 1/2<p<3/4.
\]
Substituting $\alpha=2p-1$ and $c_p=3-4p$ gives the asymmetric subcritical conclusion.

At $p=3/4$, retain $\mathscr F_n$ and set
\[
 \delta_n=\frac{\mu_n}{\sqrt{n\log n}},\qquad
 r_n=\frac{v_n}{\sqrt{n\log n}}.
\]
$\widetilde D_n(3/4,q)=\sup_x|\mathscr F_n((x-\delta_n)/r_n)-\Phi(x)|$, so the same comparison applies. Since $h_n\sim2\sqrt{n/\pi}$ and $\mathcal{H}_n=\log n+\gamma_{\rm E}+o(1)$,
\begin{align}
 \delta_n&=\frac{2\beta}{\sqrt{\pi\log n}}+o((\log n)^{-1/2}),
 \label{eq:usual-kolm-critical-delta}\\
 r_n^2&=1+\frac{\gamma_{\rm E}-4\beta^2/\pi}{\log n}+o((\log n)^{-1}).
\notag\end{align}
For $q=1/2$, $\delta_n=0$ and $r_n-1=\gamma_{\rm E}/(2\log n)+o((\log n)^{-1})$, whereas
$D_n(3/4,1/2)=O((\log n)^{-2})=o((\log n)^{-1})$ by Theorem \ref{thm:kolmogorov}. Therefore
\begin{align*}
 \widetilde D_n(3/4,1/2)\sim
 \frac{\gamma_{\rm E}}{2\log n}\sup_x|x\varphi(x)|
 =\frac{\gamma_{\rm E}}{2\sqrt{2\pi e}}(\log n)^{-1}.
\end{align*}
For fixed $q\ne1/2$, \eqref{eq:usual-kolm-critical-delta} gives $|\delta_n|\asymp(\log n)^{-1/2}$, while $r_n-1=O((\log n)^{-1})$ and $D_n(3/4,q)=O((\log n)^{-3/2})=o((\log n)^{-1/2})$ by Theorem \ref{thm:kolmogorov}. Hence
\begin{align*}
 \widetilde D_n(3/4,q)\sim\frac{2|\beta|}{\sqrt{\pi\log n}}\sup_x\varphi(x)
 =\frac{\sqrt2\,|\beta|}{\pi}(\log n)^{-1/2}.
\end{align*}
These prove the critical conclusions and complete the proof.
\end{proof}
The following remark explains the mechanisms behind
the transition points in Corollary~\ref{cor:usual-kolmogorov}.

\begin{remark}\label{rem:usual-kolmogorov-transitions}
As seen from \eqref{eq:kolm-comparison} and
\eqref{eq:cdf-gaussian-perturbation}, the transition points
in Corollary~\ref{cor:usual-kolmogorov} arise from the competition
between the error under exact standardization ($D_n(p,q)$),
the mean correction ($\delta_n$), and the
standard-deviation correction ($r_n-1$).

When $q=1/2$, the mean correction vanishes.
By \eqref{eq:usual-kolm-r-sym}, the
standard-deviation correction has order
$n^{4p-3}$ for $1/2<p<3/4$.
On the other hand, Theorem~\ref{thm:kolmogorov}
shows that the error under exact standardization
is of order $n^{-1/2}$ near $p=5/8$.
These orders meet when
\[
4p-3=-\frac12
\quad\Longleftrightarrow\quad
p=\frac58.
\]
Beyond this point, the standard-deviation correction
dominates the error under exact standardization.

For fixed $q\ne1/2$, \eqref{eq:usual-kolm-delta}
shows that the mean correction has order
$n^{2p-3/2}$ for $1/2\le p<3/4$.
By Theorem~\ref{thm:kolmogorov}, the error under
exact standardization is of order $n^{-1/2}$
near $p=1/2$.
These orders meet when
\[
2p-\frac32=-\frac12
\quad\Longleftrightarrow\quad
p=\frac12.
\]
Beyond this point, the mean correction dominates
both the error under exact standardization and
the standard-deviation correction.
\end{remark}

\appendix
\section{Exact cumulants at $p=3/4$}\label{app:criticalmoments}
We give the harmonic-sum calculations used in Propositions~\ref{prop:third} and~\ref{prop:fourth}.
At $\alpha=1/2$, where $h_n>0$, define
\[
 u_n\coloneqq \frac{\mathbb{E}_1[S_n^3]}{h_n}.
\]
Since every odd moment is $\beta$ times its value at $q=1$,
\[
 \mathbb{E}_q[S_n^3]=\beta h_nu_n
\]
for every $q\in[0,1]$, including $\beta=0$.
Applying recurrence~\eqref{eq:m3-rec} with $q=1$ and using
$h_{n+1}/h_n=(2n+1)/(2n)$, we obtain
\[
 u_{n+1}
 =
 \frac{2n+3}{2n+1}u_n
 +
 \frac{6n+1}{2n+1},
 \qquad
 u_1=1.
\]
Solving this recurrence gives
\[
 \frac{u_n}{2n+1}
 =
 3\mathcal{H}_{2n}
 -\frac32\mathcal{H}_n
 -4
 +\frac4{2n+1}.
\]
Since $\mathbb{E}_q[S_n^2]=n\mathcal{H}_n$, it follows that
\[
 \kappa_3^{(q)}(n)
 =
 \mu_n
 \left\{
 3(2n+1)(\mathcal{H}_{2n}-\mathcal{H}_n)
 +\frac32\mathcal{H}_n
 -8n
 \right\}
 +2\mu_n^3,
\]
which proves \eqref{eq:k3exactcritical} without dividing by the
possibly vanishing parameter $\beta$.

For the fourth cumulant, recurrence~\eqref{eq:m4-rec}, together with
$\mathbb{E}_{1/2}[S_n^2]=n\mathcal{H}_n$, gives
\[
 \kappa_4^{(1/2)}(n+1)
 =
 \left(1+\frac2n\right)\kappa_4^{(1/2)}(n)
 -3\mathcal{H}_n^2
 -4\mathcal{H}_n
 -2,
 \qquad
 \kappa_4^{(1/2)}(1)=-2.
\]
Solving this recurrence yields
\begin{align}\label{eq:crit-kappa4}
 \kappa_4^{(1/2)}(n)
 =
 -n\left\{
 3(n+1)\mathcal{H}_n^{(2)}
 -3\mathcal{H}_n^2
 -10\mathcal{H}_n
 +9n
 \right\},
\end{align}
which is \eqref{eq:k4critical}.

\section{Previous local limit estimates}\label{app:local}
We briefly discuss a common issue in two previously claimed local limit estimates for the elephant random walk. 
In both cases, a distribution-function estimate is converted into a relative estimate for point probabilities by differencing. The resulting additive error must then be divided by the Gaussian point-mass approximation, and this introduces the reciprocal of the corresponding lattice scale. 
The cited proofs omit this factor.

\subsection{The local estimate of Fan--Hu--Ma}

Following the notation of \cite{FanHuMa2021}, set
\[
 a_n=\prod_{j=1}^{n-1}(1+\alpha/j)^{-1},
 \qquad
 V_n=\sum_{j=1}^n a_j^2.
\]
In the passage from (46) to (47) of \cite{FanHuMa2021},
the factor $\sqrt{V_n}/a_n$ is missing.
This calculation treats the case $0<p<1/2$, the only case
for which the proof is written out there.
Since $\sqrt{V_n}/a_n\asymp\sqrt n$, this factor cannot be
absorbed into a constant independent of $n$.
Consequently, the passage from (46) to (47) does not justify
the claimed relative error bound.

\subsection{The local estimates of Huang--Fan--Xiang}
\label{app:local-hfx}

The same issue occurs in the proof of Theorem~2.7(i) of
\cite{HuangFanXiang2026}.
In the passage from (5.3) to (5.4), the factor $\tau_n/a_n$
is missing, where
\[
 \tau_n=\sqrt{V_n+n a_n^2\sigma^2}.
\]
Here we write $V_n$ for $v_n$ in \cite{HuangFanXiang2026}.
For $Z_i\equiv1$, which reduces the model to the ordinary ERW,
we have $\sigma^2=0$, and hence
\[
 \tau_n=\sqrt{V_n},
 \qquad
 \frac{\tau_n}{a_n}\asymp\sqrt n.
\]
Thus, this factor cannot be absorbed into a constant
independent of $n$, and the passage from (5.3) to (5.4)
does not justify the claimed relative error bound.
\section{A difference-equation lemma}\label{appendix:techlem}
\begin{lemma}\label{lem:difference}
Suppose that $x_{n+1}=(1+c/n)x_n+d_n$ for all sufficiently large $n$, where $c<1$ and $d_n\to d$. Then $x_n/n\to d/(1-c)$.
\end{lemma}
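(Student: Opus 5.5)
The statement is the classical fact that a linear recurrence $x_{n+1}=(1+c/n)x_n+d_n$ with $c<1$ and convergent forcing $d_n\to d$ grows linearly with slope $d/(1-c)$. I would prove it by the integrating-factor method. Fix $n_0$ such that the recurrence holds for $n\ge n_0$ and $1+c/n>0$ for $n\ge n_0$. Set
\[
\pi_n=\prod_{k=n_0}^{n-1}\left(1+\frac ck\right),
\]
so that $\pi_n>0$, $\pi_{n+1}=(1+c/n)\pi_n$, and $\pi_n\sim K n^{c}$ for some constant $K>0$ (Gamma-function asymptotics, or $\log\pi_n=c\log n+\text{const}+o(1)$). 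Dividing the recurrence by $\pi_{n+1}$ gives $x_{n+1}/\pi_{n+1}-x_n/\pi_n=d_n/\pi_{n+1}$. Summing from $n_0$ yields
\[
x_n=\pi_n\Bigl(x_{n_0}+\sum_{j=n_0}^{n-1}\frac{d_j}{\pi_{j+1}}\Bigr).
\]

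The first term satisfies $\pi_nx_{n_0}/n=O(n^{c-1})\to0$, because $c<1$. For the sum, I use that $1/\pi_{j+1}\sim K^{-1}j^{-c}$ and $-c>-1$, so $\sum_{j<n}1/\pi_{j+1}$ diverges and is asymptotic to $K^{-1}n^{1-c}/(1-c)$. A Stolz--Cesàro (Toeplitz) argument with these positive weights then shows $\sum_{j<n} d_j/\pi_{j+1}\sim d\sum_{j<n}1/\pi_{j+1}$ when $d\neq0$. More generally,
\[
\sum_{j<n} \frac{d_j-d}{\pi_{j+1}}=o\Bigl(\sum_{j<n} \frac{1}{\pi_{j+1}}\Bigr).
\]
Multiplying by $\pi_n/n\sim Kn^{c-1}$ gives the limit $d/(1-c)$.

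The only delicate point is the Cesàro step. Given $\varepsilon$, I choose $N$ with $|d_j-d|<\varepsilon$ for $j\ge N$. I then split the sum into the finitely many terms $j<N$, which contribute $O(1)\cdot\pi_n/n\to0$, and the tail, which contributes at most $\varepsilon$ times something bounded asymptotically by $\varepsilon/(1-c)$.

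The sign of $c$ matters only through the positivity of $1+c/n$, which is guaranteed for large $n$; this is why we start at $n_0$. The case $c\le -1$ is also covered, since the weights are still eventually positive and $\pi_n\sim Kn^c$ still holds.
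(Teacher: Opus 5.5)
Your proof is correct and follows essentially the same route as the paper: the integrating factor $\prod_{j}(1+c/j)$ followed by a Stolz--Ces\`aro/Toeplitz averaging step. The paper streamlines the normalization with the exact identity $(n+1)/g_{n+1}-n/g_n=(1-c)/g_{n+1}$, which says that $x_n=n$ solves the recurrence with constant forcing $1-c$. As a result it needs only $n/g_n\to\infty$, not the sharper asymptotics $\pi_n\sim Kn^c$ and $\sum_{j<n}\pi_{j+1}^{-1}\sim K^{-1}n^{1-c}/(1-c)$ that you use.
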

\begin{proof}
Choose $N$ so that $1+c/n>0$ for $n\ge N$, and put
$g_n=\prod_{j=N}^{n-1}(1+c/j)$.
Since $g_n\asymp n^c$ and $c<1$, we have $n/g_n\to\infty$.
Moreover,
\[
 \frac{x_{n+1}}{g_{n+1}}-\frac{x_n}{g_n}
 =\frac{d_n}{g_{n+1}},
 \qquad
 \frac{n+1}{g_{n+1}}-\frac{n}{g_n}
 =\frac{1-c}{g_{n+1}}.
\]
Hence, the Stolz--Ces\`aro theorem yields
\[
 \frac{x_n}{n}
 =\frac{x_n/g_n}{n/g_n}
 \longrightarrow\frac{d}{1-c}.
\]
\end{proof}

\section*{Acknowledgments}
The authors would like to thank Professor Kouji Yano for carefully reading the manuscript and for valuable comments. 
G.T. was supported by JST SPRING, Grant Number JPMJSP2138.

\section*{Declaration on the use of generative AI}
During the preparation of this manuscript, the authors used the generative AI tools ChatGPT (OpenAI) and Claude (Anthropic) for language editing, improving the clarity of the presentation, and assisting in checking mathematical arguments. All mathematical arguments, statements, and conclusions were independently verified by the authors, who take full responsibility for the content of the manuscript.


\end{document}